\def\norm#1{\hspace{0.2ex} \|#1\| \hspace{0.2ex}}
\newcommand{\1}{\ensuremath{\mathbbm{1}}} 
\newcommand{\R}{\ensuremath{\mathbbm{R}}} 
\newcommand{\N}{\ensuremath{\mathbbm{N}}} 
\newcommand{\range}{\mathcal R}
\newcommand{\dx}[1][x]{\ensuremath{\,{\rm{d}} #1}}
\newcommand{\LL}{\mathcal L}
\newcommand{\kommentar}[1]{}
\begin{document}

\title{Uniqueness, stability and global convergence for a discrete inverse elliptic Robin transmission problem}

%\subtitle{Do you have a subtitle?\\ If so, write it here}

\titlerunning{Uniqueness, stability and global convergence for an inverse transmission problem}        % if too long for running head

\author{Bastian Harrach}

%\authorrunning{Short form of author list} % if too long for running head

\institute{B. Harrach \at Institute for Mathematics, Goethe-University Frankfurt, Germany\\
              \email{harrach@math.uni-frankfurt.de}          
}

\date{}%Received: date / Accepted: date}
% The correct dates will be entered by the editor

\maketitle

\begin{abstract}We derive a simple criterion that ensures uniqueness, Lipschitz stability and global convergence of Newton's method for the finite dimensional zero-finding problem of a continuously differentiable, pointwise convex and monotonic function. Our criterion merely requires to evaluate the directional derivative of the forward function at finitely many evaluation points and for finitely many directions. 

We then demonstrate that this result can be used to prove uniqueness, stability and global convergence for an inverse coefficient problem with finitely many
measurements. We consider the problem of determining an unknown inverse Robin transmission coefficient in an elliptic PDE. Using a relation to monotonicity and localized potentials techniques, we show that a piecewise-constant coefficient on an a-priori known partition with a-priori known bounds is uniquely determined by finitely many boundary measurements and that it can be uniquely and stably reconstructed by a globally convergent Newton iteration. We derive a constructive method to identify these boundary measurements, calculate the stability constant and give a numerical example.
\keywords{Uniqueness \and Lipschitz stability \and global convergence \and Robin transmission problem \and Newton method \and monotonicity method \and localized potentials}
% \PACS{PACS code1 \and PACS code2 \and more}
\subclass{35R30 \and 65M32 \and 58C15}
\end{abstract}

%%%%%%%%%%%%%%%%%%%%%%%%%%%%%%%%%%%%%%%%%%%%%%%%%%%%%%%%%%%%%%%%%%%%%%%%%%%%
\section{Introduction}
\label{Sec:intro}
%%%%%%%%%%%%%%%%%%%%%%%%%%%%%%%%%%%%%%%%%%%%%%%%%%%%%%%%%%%%%%%%%%%%%%%%%%%%

New technologies for medical imaging, non-destructive testing, or geophysical exploration are often based on mathematical inverse coefficient problems,
where the coefficient of a partial differential equation is to be reconstructed from (partial) knowledge of its solutions.
A prominent example is the emerging technique of electrical impedance tomography (EIT), cf.\ \cite{henderson1978impedance,barber1984applied,wexler1985impedance,newell1988electric,metherall1996three,cheney1999electrical,borcea2002electrical,borcea2003addendum,lionheart2003eit,holder2004electrical,bayford2006bioimpedance,uhlmann2009electrical,martinsen2011bioimpedance,seo2013electrical,adler2015electrical}, and the references therein for a broad overview.
Inverse coefficient problems are usually highly non-linear and ill-posed, and uniqueness and stability questions, as well as the design and the theoretical study of reconstruction algorithms are extremely challenging topics in theoretical and applied research.

In practical applications, only finitely many measurements can be made, the unknown coefficient has to be parametrized with finitely many variables (e.g., by assuming piecewise-constantness on a given partition), and physical considerations typically limit the unknown coefficient to fall between certain a-priori known bounds.
Thus, after shifting and rescaling, a practical inverse coefficient problem can be put in the form of finding the zero $x\in [0,1]^n$ of a non-linear function $F:\ \R^n\to \R^m$,
\[
F(x)=0.
\]
It is of utmost importance to determine what measurements make this finite-dimen\-sion\-al inverse (or zero-finding) problem uniquely solvable, to evaluate the stability of the solution with respect to model and measurement errors, and to design convergent numerical reconstruction algorithms.

In this paper, we will study this problem under the assumption that $F$ is a pointwise monotonic and convex function, which often arises in elliptic inverse coefficient problems (cf.\ our remarks on the end of this introduction). We will derive a simple criterion that implies the existence of a zero, and the injectivity of $F$ in a certain neighborhood of $[0,1]^n$. It also allows us to quantify the Lipschitz stability constant of the left inverse $F^{-1}$ and, for $n=m$, ensures global convergence of Newton's method. The criterion is easy-to-check as it merely requires to evaluate the directional derivative $F'(z)d$ for a finite number of evaluation points $z$ and finitely many directions $d$.

We then show how our result can be applied to the inverse problem of determining a Robin transmission coefficient in an elliptic PDE from the associated Neumann-to-Dirichlet operator that is motivated by EIT-based corrosion detection \cite{harrach2019global}. We assume that the Robin coefficient is piecewise-constant on a-priori known partition of the interface into $n$ parts, and that we a-priori know upper and lower bounds for the Robin coefficient's values. We then show how to construct $n$ boundary measurements that uniquely determine the $n$ unknown Robin values, and for which a standard Newton method globally converges. We also quantify the stability of the solution with respect to errors, and numerically demonstrate our result on a simple setting.

Let us give some references to put our result in the context of existing literature. The arguably most famous elliptic inverse coefficient problem is the Calder\'on problem \cite{calderon1980inverse,calderon2006inverse}, that arises in EIT, cf.\ \cite{kohn1984determining,kohn1985determining,druskin1998uniqueness,sylvester1987global,nachman1996global,astala2006calderon,kenig2007calderon,haberman2013uniqueness,caro2016global,krupchyk2016calderon} for an incomplete list of seminal breakthroughs for the uniqueness question in an infinite-dimensional setting. 

Several recent works have addressed uniqueness and Lipschitz stability questions for the problem of determining finitely many parameters (e.g., by assuming piecewise-constantness) from finitely or infinitively many measurements in inverse coefficient problems, cf., \cite{kazemi1993stability,alessandrini1996determining,imanuvilov1998lipschitz,imanuvilov2001global,cheng2003lipschitz,alessandrini2005lipschitz,bacchelli2006lipschitz,bellassoued2006lipschitz,klibanov2006lipschitz,klibanov2006lipschitz_nonstandard,bellassoued2007lipschitz,sincich2007lipschitz,yuan2007lipschitz,yuan2009lipschitz,beretta2011lipschitz,beretta2013lipschitz,melendez2013lipschitz,alessandrini2017lipschitz,beretta2017uniqueness,beilina2017lipschitz,alessandrini2018lipschitz,alberti2019calderon,ruland2019lipschitz,harrach2019uniqueness,harrach2019global,alberti2019infinite,harrach2020monotonicity}. To the knowledge of the author, the results presented herein, is the first on explicitly calculating those measurements that uniquely determine the unknown parameters, and, together with \cite{harrach2019global}, it is the first result to explicitly calculate the Lipschitz constant for a given setting. Moreover, we obtain the unique existence of a solution also for noisy measurements, so that Lipschitz stability also yields an error estimate in the case of noise.

Reconstruction algorithms for inverse coefficient problems typically rely on Newton-type approaches 
or on globally minimizing a non-convex regularized data-fitting functional. Both approaches require an initial guess close to the unknown solution, so that most algorithms are only known to converge locally.
For the sake of rigor, it should be noted at this point, that the difficulty in this context is not to construct globally convergent methods 
but \emph{computationally feasible} globally convergent methods. To elaborate on this point, let us consider a minimization problem with a continuous functional $f:\ I\to \R$ over a finite-dimensional interval $I\subseteq \R^n$. A trivial optimization algorithm is to choose a countable dense subset $(q_m)_{m\in \N}\subset I$, and setting $x_0:=q_0$,
\[
x_m:=\left\{ \begin{array}{l l} q_m & \text{ if $f(q_m)<f(x_{m-1})$}\\
x_{m-1} & \text{ else.}
 \end{array}\right.
\]
Then, obviously, any accumulation point of $(x_m)_{m\in \N}$ is a global minimizer of $f$.
%Similarly, one can globalize Newton-type methods by choosing a countable dense set of 
%starting points.
But this type of approach requires a completely unfeasible amount of function evaluations and is thus usually disregarded in practice. Note, however, that together with estimates on the convergence range of an iterative algorithm
as in the recent preprint of Alberti and Santacesaria \cite{alberti2019infinite}, and the progress of parallel computing power, these type of approaches may become feasible at least for lower dimension numbers. 

To obtain (computationally feasible) globally convergent algorithms, quasi-reversibility and convexification ideas have been developed in the 
the seminal work of Klibanov et al., cf., e.g.\ \cite{beilina2008globally,beilina2012approximate,klibanov2017convexification,klibanov2019convexification}.
Alternatively, one can use very specific properties of the considered problem, cf., e.g., the global convergence result of Knudsen, Lassas, Mueller and Siltanen \cite{knudsen2009regularized} for the d-bar method for EIT, 
or resort to only reconstructing the shape of an anomaly, cf.\ \cite{ikehata1999draw,ikehata2000reconstruction,ide2007probing,harrach2010exact,harrach2013monotonicity, harrach2016enhancing,harrach2018monotonicity} for some globally convergent approaches.
The theory developed in this work shows that, somewhat surprisingly, global convergence holds for the standard zero-finding Newton method, 
when the right measurements are being used, and this also implies fast quadratic convergence. On the other hand, so far, our theory does not allow to utilize more measurements than unknowns or to explicitly add regularization, which would be advantageous in practical applications. 
Moreover, the calculated measurements tend to become more or more oscillatory for higher dimensional problems. 
Hence, so far, we can only expect our approach to be practically feasible for relatively few unknowns where discretization sufficiently regularizes the ill-posed problem.

On the methodological side, this work builds upon \cite{harrach2019global,harrach2019uniqueness} and stems from the theory of combining monotonicity estimates with localized potentials, cf.\ \cite{harrach2009uniqueness,harrach2010exact,harrach2012simultaneous,arnold2013unique,harrach2013monotonicity,barth2017detecting,harrach2017local,brander2018monotonicity,griesmaier2018monotonicity,harrach2018localizing,seo2019learning,harrach2019fractional_I,harrach2019helmholtz,harrach2019dimension,eberle2020lipschitz,harrach2020monotonicity}
for related works, and \cite{tamburrino2002new,harrach2015combining,harrach2015resolution,harrach2016enhancing,maffucci2016novel,tamburrino2016monotonicity,garde2017convergence,su2017monotonicity,ventre2017design,garde2018comparison,harrach2018monotonicity,zhou2018monotonicity,candiani2019monotonicity,garde2019regularized,garde2020reconstruction}
for practical monotonicity-based reconstruction methods.
In this work, the monotonicity and convexity of the forward function is based on the so-called monotonicity relation
which goes back to Ikehata, Kang, Seo, and Sheen \cite{ikehata1998size,kang1997inverse}. The
existence of measurements that fulfill the extra criterion 
on the directional derivative evaluations follows from localized potentials arguments \cite{gebauer2008localized}. Hence, it might be possible to extend the theory developed herein to other elliptic inverse coefficient problems where monotonicity and localized potentials results are also available. Note however, that this extension is not obvious
since the localized potentials results for the herein considered Robin transmission problem are stronger than 
those known for other coefficient problems such as EIT.

Finally, it should be noted, that the monotonicity and localized potentials techniques evolved from the factorization method \cite{kirsch1998characterization,bruhl2000numerical,gebauer2007factorization,lechleiter2008factorization,harrach2013recent}, and that
global convergence for Newton's method for finite-dimensional zero-finding problems is a classical result for pointwise convex functions that are inverse monotonic (also called Collatz monotone \cite{collatz1952aufgaben}), cf., e.g., the book of Ortega and Rheinboldt \cite[Thm.~13.3.2]{ortega1970iterative}. Such problems arise, e.g., in solving non-linear elliptic PDEs. Roughly speaking, one might be tempted to say, that elliptic forward coefficient problems lead to inverse monotonic convex function, and
inverse elliptic coefficient problems lead to forward monotonic convex functions. 
Our extra criterion on the directional derivative evaluations allows us to write our forward monotonic function as an
affine transformation of an inverse monotonic function in a certain region and (together with some technical arguments to
ensure the iterates staying in this region), this is the major key in proving our global convergence result.

The paper is organized as follows. In section \ref{Sec:Newton}, we prove uniqueness, stability and global convergence of the Newton method
for continuously differentiable, pointwise convex and monotonic functions under a simple extra condition on the directional derivatives. 
In section~\ref{sect:inverse_bvp}, we apply this result to an inverse Robin coefficient problem, and show how to determine those measurements that uniquely and stably determine the unknown coefficient with a desired resolution via a globally convergent Newton iteration. We also give a numerical example in section~\ref{sect:inverse_bvp}. Throughout this paper, we take the somewhat lengthy, but hopefully reader-friendly approach of first presenting less technical intermediate results to motivate our approach.

%%%%%%%%%%%%%%%%%%%%%%%%%%%%%%%%%%%%%%%%%%%%%%%%%%%%%%%%%%%%%%%%%%%%%%%%%%%%
\section{Uniqueness, stability and global Newton convergence}
\label{Sec:Newton}
%%%%%%%%%%%%%%%%%%%%%%%%%%%%%%%%%%%%%%%%%%%%%%%%%%%%%%%%%%%%%%%%%%%%%%%%%%%%

We consider a continuously differentiable, pointwise convex and monotonic function
\[
F:\ U\subseteq \R^n\to \R^m
\]
where $n,m\in \N$, $m\geq n \geq 2$, and $U$ is a convex open set. In this section, we will derive a simple criterion that implies injectivity of $F$ on a multidimensional interval.
The criterion also allows us to estimate the Lipschitz stability constant of the left inverse $F^{-1}$ and, for $n=m$, ensures global convergence of Newton's method.  

\begin{remark}\label{remark:convex_mon}
Throughout this work, ''$\leq$'' is always understood pointwise for finite-dimensional vectors and matrices, 
and $x\not\leq y$ denotes the converse, i.e., that $x-y$ has at least one positive entry. 

Monotonicity and convexity are understood with respect to this pointwise partial order, i.e., $F:\ U\subseteq \R^n\to \R^m$ is monotonic if
\begin{align*}%\label{eq:assumption_monotonic_nodiff}
x\leq y \quad \text{ implies } \quad F(x)\leq F(y) \quad \text{ for all } x,y\in U,
\end{align*}
and $F$ is convex if
\begin{align*}%\label{eq:assumption_convex_nodiff}
F((1-t)x+ty)\leq (1-t)F(x)+tF(y) \quad \text{ for all } x,y\in U,\ t\in [0,1].
\end{align*}
We also say that a function $F$ is anti-monotone if $-F$ is monotonic.

For continuously differentiable functions, it is easily shown that monotonicity is equivalent to 
\begin{align}\label{eq:assumption_monotonic}
F'(x)y \geq 0  \quad  \text{ for all } x\in U,\ 0\leq y\in \R^n,
\end{align}
and thus equivalent to $F'(x)\geq 0$. Convexity is known to be equivalent to
\begin{alignat}{2}
\label{eq:assumption_convex} 
F(y)-F(x)&\geq F'(x)(y-x) \quad && \text{ for all } x,y\in U,
\end{alignat}
cf., e.g., \cite[Thm.~13.3.2]{ortega1970iterative}. 
All the proofs in this section use the monotonicity and convexity assumption in the form \eqref{eq:assumption_monotonic} and \eqref{eq:assumption_convex}.
\end{remark}

Throughout this work, we denote by $e_j\in \R^n$ the $j$-th unit vector in $\R^n$, $\1:=(1,1,\ldots,1)^T\in \R^n$, and $e_j':=\1-e_j$.  $I_n\in \R^{n\times n}$ denotes the $n$-dimensional identity matrix, and $\1 \1^T\in \R^{n\times n}$ is the matrix containing $1$ in all of its entries.

%%%%%%%%%%%%%%%%%%%%%%%%%%%%
\subsection{A simple criterion for uniqueness and Lipschitz stability}\label{subsect:simple_uniqueness}

Before we state our result in its final form in subsection \ref{subsect:tight}, we derive two weaker (and less technical) results that motivate our
arguments and may be of independent interest. We first show a simple criterion that yields injectivity of $F$ and allows
us to estimate the Lipschitz stability constant of its left inverse $F^{-1}$.

\begin{theorem}\label{thm:uniqueness_simple}
Let $F:\ U\subseteq \R^n\to \R^m$, $m\geq n\geq 2$, be a continuously differentiable, pointwise convex and monotonic function on a convex open set $U$ containing $[-1,3]^n$.
If, additionally,
\begin{equation}\label{eq:thm_uniqueness_simple_assumption}
F'(-e_j+ 3e_j') \left(e_j - 3e_j'\right)\not\leq 0 \quad \text{ for all } j\in \{1,\ldots,n\},
\end{equation}
then the following holds:
\begin{enumerate}[(a)]
\item $F$ is injective on $[0,1]^n$.
\item $F'(x)\in \R^{m\times n}$ is injective for all $x\in [0,1]^n$.
\item With 
\begin{equation}\label{eq:thm_uniqueness_def_L}
L:=2 \left( \min_{j=1,\ldots,n}\ \max_{k=1,\ldots,m} e_k^T F'(-e_j+ 3e_j') \left(e_j - 3e_j'\right)\right)^{-1}>0,
\end{equation}
we have that for all $x,y\in [0,1]$
\[
\norm{x-y}_\infty\leq L \norm{F(x)-F(y)}_\infty, \quad \text{ and } \quad \norm{F'(x)^{-1}}_\infty \leq L,
\]
where $F'(x)^{-1}\in \R^{n\times m}$ denotes the left inverse of $F'(x)\in \R^{m\times n}$.
\end{enumerate}
\end{theorem}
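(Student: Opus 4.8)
The plan is to prove the lower Lipschitz estimate $\norm{x-y}_\infty\le L\norm{F(x)-F(y)}_\infty$ of part (c) first, and then read off (a), (b) and the bound on the left inverse as consequences. Part (a) is immediate from the estimate: $F(x)=F(y)$ forces $\norm{x-y}_\infty\le 0$. To prove the estimate I fix $x,y\in[0,1]^n$, set $s:=\norm{x-y}_\infty$, and let $j$ be an index attaining this maximum; since the claim is symmetric in $x,y$ I may assume $x_j-y_j=s$, and I discard the trivial case $s=0$.

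The heart of the argument is a chain of three elementary reductions turning $F(x)-F(y)$ into the directional-derivative quantity of \eqref{eq:thm_uniqueness_def_L}. First, convexity \eqref{eq:assumption_convex} gives, for every row $k$, $e_k^T(F(x)-F(y))\ge e_k^T F'(y)(x-y)$. Second, because $(x-y)_j=s$ while $(x-y)_i\ge -s$ for all $i$, and because $F'(y)\ge 0$ by \eqref{eq:assumption_monotonic}, bounding the off-diagonal contributions by their worst case yields $e_k^T F'(y)(x-y)\ge s\, e_k^T F'(y)(e_j-e_j')$. Third, the identity $2(e_j-e_j')-(e_j-3e_j')=\1\ge 0$ together with $F'(y)\ge 0$ gives $e_k^T F'(y)(e_j-e_j')\ge \tfrac12 e_k^T F'(y)(e_j-3e_j')$. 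Chaining these for the row $k$ that maximises the last expression, I obtain $\norm{F(x)-F(y)}_\infty\ge \tfrac{s}{2}\max_k e_k^T F'(y)(e_j-3e_j')$. Comparing with \eqref{eq:thm_uniqueness_def_L}, the estimate reduces to the key inequality $\max_k e_k^T F'(y)(e_j-3e_j')\ge \max_k e_k^T F'(-e_j+3e_j')(e_j-3e_j')$ for all $y\in[0,1]^n$, whose right-hand side is at least $2/L$ and is positive exactly by hypothesis \eqref{eq:thm_uniqueness_simple_assumption}.

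I expect this last inequality to be the main obstacle, since it links the directional derivative $F'(y)(e_j-3e_j')$ at an \emph{interior} point $y\in[0,1]^n$ to its value at the corner $z_j:=-e_j+3e_j'$ of the enlarged box $[-1,3]^n$. The natural tool is one-dimensional convexity: for fixed $k$ the scalar map $t\mapsto e_k^T F'\bigl(z_j+t(e_j-3e_j')\bigr)(e_j-3e_j')$ is nondecreasing, and since $z_j=-(e_j-3e_j')$ this ray passes through the origin at $t=1$, so the corner value is the smallest along it. The difficulty is that a generic $y$ does \emph{not} lie on this ray, and a componentwise comparison $e_k^T F'(y)(e_j-3e_j')\ge e_k^T F'(z_j)(e_j-3e_j')$ genuinely fails (off-diagonal Hessian terms have no definite sign). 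One must therefore use the room created by enlarging $[0,1]^n$ to $[-1,3]^n$, together with $F'\ge 0$ and the positivity in \eqref{eq:thm_uniqueness_simple_assumption}, to bridge from $y$ to $z_j$; a promising route is the monotone-derivative inequality $e_k^T(F'(y)-F'(z_j))(y-z_j)\ge 0$, in which $y-z_j=(e_j-3e_j')+y$ with $y\ge 0$, played against the worst off-diagonal terms. Controlling these cross-derivative terms, which the componentwise estimate cannot reach, is the crux.

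Granting the key inequality, part (b) and the bound on the left inverse follow by running the same reductions on the linear map $F'(x)$ instead of the difference $F(x)-F(y)$: for $0\ne d\in\R^n$ with $\norm{d}_\infty=|d_j|$ and (after a sign change) $d_j>0$, monotonicity alone gives $e_k^T F'(x)d\ge |d_j|\,e_k^T F'(x)(e_j-e_j')\ge\tfrac12 |d_j|\,e_k^T F'(x)(e_j-3e_j')$, and the key inequality then yields $\norm{F'(x)d}_\infty\ge \tfrac1L\norm{d}_\infty$. This lower bound is equivalent to the injectivity of $F'(x)$ and, defining $F'(x)^{-1}$ on the range of $F'(x)$ by $F'(x)^{-1}F'(x)d:=d$, to the operator-norm estimate $\norm{F'(x)^{-1}}_\infty\le L$.
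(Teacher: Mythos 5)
Your scaffolding is sound, and your final paragraph (converting a uniform lower bound $\max_k e_k^T F'(x)(\,\cdot\,)\geq L^{-1}$ into injectivity of $F$ and $F'(x)$ and the two Lipschitz bounds) is exactly the paper's auxiliary Lemma~\ref{lemma:aux_lemma}. But the proof breaks at precisely the point you flag as ``the crux'', and the statement you reduce to is not merely unproven --- it is false. Your step~3 changes direction from $e_j-e_j'$ to $e_j-3e_j'$ \emph{at the interior point} $y$, which forces you to compare $F'(y)$ and $F'(z^{(j)})$ in the \emph{same} direction $e_j-3e_j'$; componentwise convexity, monotonicity and hypothesis \eqref{eq:thm_uniqueness_simple_assumption} together do not imply this. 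Concretely, take $n=m=2$, $j=1$, $d:=e_1-3e_1'=(1,-3)^T$, $z^{(1)}=-d$, and $f_1(x)=\frac12 x^TAx+b^Tx$ with $A=\begin{pmatrix}2&1\\1&1/2\end{pmatrix}\succeq 0$ and $b=(13,2)^T$, so that $\nabla f_1\geq 0$ on a neighborhood of $[-1,3]^2$. Then $e_1^TF'(y)d-e_1^TF'(z^{(1)})d=(Ad)^T(y-z^{(1)})=-1$ at $y=(1,1)$, i.e.\ the directional derivative in the fixed direction $d$ \emph{strictly decreases} from the corner to the interior (the same holds for nearby $y$ with $y_1<1$, which do occur as the ``$y$'' of a pair with $s>0$). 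Setting $f_2(x_1,x_2):=f_1(x_2,x_1)$ symmetrizes the example: \eqref{eq:thm_uniqueness_simple_assumption} holds for $j=1,2$ with the same value $6.5$, so $2/L=6.5$, while for $y=(1-\delta,1)$, $\delta\in(0,1)$, one computes $\max_k e_k^TF'(y)d=5.5+\delta<6.5$ (the second component is strongly negative there). Hence your key inequality, and even its weaker consequence $\max_k e_k^TF'(y)(e_j-3e_j')\geq 2/L$, fail --- although the theorem's conclusion does hold for this $F$.

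The repair is to change direction only at the corner and to bridge the two evaluation points through the actual displacement $x-z^{(j)}$, using convexity \eqref{eq:assumption_convex} twice as a two-sided sandwich rather than gradient monotonicity in a fixed direction. For $x\in[0,1]^n$ one has the componentwise bounds $e_j-3e_j'\leq x-z^{(j)}\leq 2(e_j-e_j')$, and therefore, using $F'(x)\geq 0$, then convexity at $x$, then convexity at $z^{(j)}$, then $F'(z^{(j)})\geq 0$:
\begin{equation*}
2F'(x)(e_j-e_j')\ \geq\ F'(x)\bigl(x-z^{(j)}\bigr)\ \geq\ F(x)-F(z^{(j)})\ \geq\ F'(z^{(j)})\bigl(x-z^{(j)}\bigr)\ \geq\ F'(z^{(j)})(e_j-3e_j').
\end{equation*}
This gives $\max_k e_k^TF'(x)(e_j-e_j')\geq L^{-1}$ for all $x\in[0,1]^n$ --- note the direction $e_j-e_j'$ on the left, which is all that your step~2 and your final paragraph actually need, so your step~3 should simply be deleted. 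This is the paper's proof. The ``room'' created by the enlarged box $[-1,3]^n$ enters through the upper bound $x-z^{(j)}\leq 2(e_j-e_j')$, and the cross-derivative terms you were worried about never appear, because what threads through the sandwich is the displacement $x-z^{(j)}$ itself, not a direction held fixed at both endpoints.
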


To prove Theorem \ref{thm:uniqueness_simple}, we will first formulate an auxiliary lemma, which will also be used in our more technical results. Note that assumption \eqref{eq:max_lemma_aux} in the following lemma simply means that a row permutation of the non-negative matrix $F'(x)\in \R^{m\times n}$ is strictly diagonally dominant in its first $n$ rows.

\begin{lemma}\label{lemma:aux_lemma}
Let $F:\ U\subseteq \R^n\to \R^m$, $m\geq n\geq 2$, be continuously differentiable, pointwise convex and monotonic on a convex open set $U$, 
and let $L>0$. 
\begin{enumerate}[(a)]
\item If $x\in U$ fulfills 
\begin{equation}\label{eq:max_lemma_aux}
\max_{k=1,\ldots,m} e_k^T F'(x) \left(e_j - e_j'\right)\geq L^{-1} \quad \text{ for all } j\in \{1,\ldots,n\},
\end{equation}
then $F'(x)\in \R^{m\times n}$ is injective, and its left inverse fulfills $\norm{F'(x)}_\infty^{-1}\leq L$. 
\item If, additionally, $y\in U$ also fulfills \eqref{eq:max_lemma_aux}, then
\[
\norm{x-y}_\infty\leq L \norm{F(x)-F(y)}_\infty.
\]
\end{enumerate}
\end{lemma}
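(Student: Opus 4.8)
The plan is to exploit the nonnegativity of $F'(x)$ that follows from monotonicity, together with the diagonal-dominance reading of \eqref{eq:max_lemma_aux}. First I would record that, by \eqref{eq:assumption_monotonic}, the matrix $A:=F'(x)\in\R^{m\times n}$ has nonnegative entries $A_{kl}\geq 0$. Since $e_j-e_j'=2e_j-\1$ equals $+1$ in its $j$-th component and $-1$ in every other component, the scalar $e_k^T F'(x)(e_j-e_j')$ is exactly $A_{kj}-\sum_{l\neq j}A_{kl}$, so \eqref{eq:max_lemma_aux} says that for each $j$ there is a row index $k(j)$ with $A_{k(j),j}-\sum_{l\neq j}A_{k(j),l}\geq L^{-1}$. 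A short observation I would make here is that a single row can be strictly dominant in at most one column (dominance in column $j$ forces $A_{k,j}>A_{k,l}$ for every $l\neq j$), so the rows $k(1),\dots,k(n)$ may be chosen pairwise distinct; this is the precise meaning of the row-permutation remark preceding the lemma. The technical heart is then the pointwise estimate
\[
\norm{v}_\infty\leq L\,\norm{F'(x)v}_\infty\qquad\text{for all }v\in\R^n,
\]
which I would prove by picking $j$ with $|v_j|=\norm{v}_\infty$, reducing to $v_j=\norm{v}_\infty\geq 0$ after possibly replacing $v$ by $-v$, and estimating the $k(j)$-th component: using $A_{k(j),l}\geq 0$ and $|v_l|\leq v_j$ one gets $e_{k(j)}^T A v\geq v_j\big(A_{k(j),j}-\sum_{l\neq j}A_{k(j),l}\big)\geq L^{-1}\norm{v}_\infty$.

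For part (a), injectivity of $F'(x)$ is immediate from the displayed estimate, since $F'(x)v=0$ forces $\norm{v}_\infty=0$. For the left inverse I would use the distinct rows $k(1),\dots,k(n)$: letting $S\in\R^{n\times m}$ be the selection matrix extracting these rows, the matrix $M:=SF'(x)\in\R^{n\times n}$ is strictly diagonally dominant with dominance gap at least $L^{-1}$, hence invertible with $\norm{M^{-1}}_\infty\leq L$ by the standard bound for diagonally dominant matrices. Then $F'(x)^{-1}:=M^{-1}S$ is a left inverse of $F'(x)$ and, since $\norm{S}_\infty=1$, it satisfies $\norm{F'(x)^{-1}}_\infty\leq \norm{M^{-1}}_\infty\leq L$.

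For part (b), I would feed convexity into the same mechanism. Applying \eqref{eq:assumption_convex} once at $x$ and once at $y$ yields the two-sided sandwich $F'(y)(x-y)\leq F(x)-F(y)\leq F'(x)(x-y)$. Writing $d:=x-y$ and choosing $j$ with $|d_j|=\norm{d}_\infty$, I would split into the two sign cases: if $d_j\geq 0$, the lower inequality together with the dominant row of $F'(y)$ in column $j$ gives $e_{k(j)}^T(F(x)-F(y))\geq L^{-1}\norm{d}_\infty$; if $d_j<0$, the upper inequality together with the dominant row of $F'(x)$ in column $j$ gives $e_{k(j)}^T(F(x)-F(y))\leq -L^{-1}\norm{d}_\infty$. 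In either case $\norm{F(x)-F(y)}_\infty\geq L^{-1}\norm{d}_\infty$, which is the claim. The individual steps are elementary once the setup is fixed; I expect the only real pitfalls to be bookkeeping ones — keeping the orientation of the two convexity inequalities straight and matching each sign of $d_j$ to the correct one-sided estimate, and (for $m>n$) verifying that the dominant rows are genuinely distinct so that the $n\times n$ block $M$ is well defined and invertible.
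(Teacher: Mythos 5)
Your proposal is correct, and its backbone coincides with the paper's: the key coercivity estimate $\norm{F'(x)v}_\infty\geq L^{-1}\norm{v}_\infty$ (the paper derives it from the vector inequality $v/\norm{v}_\infty\geq e_j-e_j'$ plus monotonicity, you derive it entrywise from nonnegativity of $F'(x)$ and the dominance gap --- the same computation in different clothing), and for (b) the same convexity sandwich $F'(y)(x-y)\leq F(x)-F(y)\leq F'(x)(x-y)$ with the same two-sign case split. Where you genuinely diverge is the left-inverse bound in (a). The paper disposes of it in one sentence, reading it off from the coercivity estimate; strictly speaking that estimate only bounds the inverse on the range of $F'(x)$, and for $m>n$ one needs a further word (restriction to the range, or a Hahn--Banach-type extension of the row functionals) to get a left inverse defined on all of $\R^m$ with the same $\norm{\cdot}_\infty$-bound --- a point the paper leaves implicit. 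You instead construct an explicit left inverse $M^{-1}S$: the observation that a nonnegative row can be strictly dominant in at most one column makes the selected rows $k(1),\ldots,k(n)$ automatically distinct, the selected block $M=SF'(x)$ is strictly diagonally dominant with gap at least $L^{-1}$, and the standard Varah-type bound gives $\norm{M^{-1}}_\infty\leq L$. This buys two things: it pins down concretely which left inverse satisfies the bound, and it turns the paper's preliminary remark about row permutations and diagonal dominance into the actual proof mechanism. The trade-off is reliance on the (standard, but external) bound for strictly diagonally dominant matrices, whereas the paper's route is self-contained.
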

\begin{proof}
\begin{enumerate}[(a)]
\item For all $0\neq d \in \R^n$, at least one of the entries of $\frac{d}{\norm{d}_\infty}$ must be either $1$ or $-1$,
so that there exists $j\in \{1,\ldots,n\}$ with either
\[
\frac{d}{\norm{d}_\infty}\geq e_j-e_j', \quad \text{ or } \quad \frac{d}{\norm{d}_\infty}\leq -e_j+e_j'.
\]
We thus obtain from the monotonicity assumption \eqref{eq:assumption_monotonic} that either
\[
F'(x)\frac{d}{\norm{d}_\infty}\geq F'(x)(e_j-e_j'), \quad
\text{ or }  \quad F'(x)\frac{-d}{\norm{d}_\infty}\geq F'(x)(e_j-e_j').
\] 
In both cases, it follows from \eqref{eq:max_lemma_aux} that
\[
\frac{\norm{F'(x)d}_\infty}{\norm{d}_\infty}\geq \max_{k=1,\ldots,m} e_k^T F'(x) \left(e_j - e_j'\right)\geq L^{-1}.
\]
This proves injectivity of $F'(x)$ and the bound on its left inverse.

\item Likewise, for $x\neq y$, either
\[
\frac{x-y}{\norm{y-x}_\infty}\geq e_j-e_j', \quad \text{ or } \quad \frac{y-x}{\norm{x-y}_\infty}\geq e_j-e_j',
\]
so that by monotonicity \eqref{eq:assumption_monotonic} and assumption \eqref{eq:max_lemma_aux}, either
\[
\max_{k=1,\ldots,m} e_k^T F'(x)\frac{y-x}{\norm{x-y}_\infty}%\geq \max_{k=1,\ldots,m} e_k^T  F'(x)(e_j-e_j')
\geq L^{-1},  \text{ or }  \max_{k=1,\ldots,m} e_k^T F'(y)\frac{x-y}{\norm{x-y}_\infty}%\geq F'(y)(e_j-e_j')
\geq L^{-1}.
\]
By convexity \eqref{eq:assumption_convex}, it then follows that
\begin{align*}
\norm{F(y)-F(x)}_\infty & = 
\max_{k=1,\ldots,m} |e_k^T (F(y)-F(x))|\\ & =
 \max_{k=1,\ldots,m} \max \{ e_k^T (F(y)-F(x)), e_k^T (F(x)-F(y)) \}\\
&\geq \max_{k=1,\ldots,m} \max \{ e_k^T F'(x)(y-x), e_k^T F'(y)(x-y) \}\\
&\geq \norm{y-x}_\infty L^{-1}.
\end{align*}
\hfill $\Box$
\end{enumerate}
\end{proof}

We can now prove Theorem~\ref{thm:uniqueness_simple} with lemma \ref{lemma:aux_lemma}.

\begin{proof}[of Theorem~\ref{thm:uniqueness_simple}]
Let $j\in \{1,\ldots,n\}$. Writing 
\[
z^{(j)}:=-e_j+ 3e_j'\in [-1,3]^n\subset U,
\]
we have that
\[
e_j - 3 e_j'  \leq x - z^{(j)}\leq 2e_j - 2e_j' \quad \text{ for all } x\in [0,1]^n.
\]
Thus we deduce from \eqref{eq:assumption_monotonic} and \eqref{eq:assumption_convex} that for all $x\in [0,1]^n$
\begin{align*}
F'(x) \left(e_j - e_j'\right)
&
%=\frac{1}{2} F'(x) \left(2e_j - 2e_j'\right)
\geq  \frac{1}{2} F'(x) \left(x - z^{(j)}\right)
\geq \frac{1}{2} \left( F(x) - F(z^{(j)})  \right)\\
&\geq \frac{1}{2} F'(z^{(j)}) \left(x - z^{(j)}\right)
\geq \frac{1}{2} F'(z^{(j)}) \left(e_j - 3e_j'\right).
\end{align*}
With the definition of $L$ in \eqref{eq:thm_uniqueness_def_L}, this shows that 
\begin{equation}\label{eq:thm_uniqueness_proof_aux}
\max_{k=1,\ldots,m} e_k^T F'(x) \left(e_j - e_j'\right)\geq L^{-1} \quad \text{ for all } x\in [0,1]^n,\ j\in \{1,\ldots,n\},
\end{equation}
so that (a), (b) and (c) follow from lemma \ref{lemma:aux_lemma}.
\hfill $\Box$
\end{proof}

%%%%%%%%%%%%%%%%%%%%%%%%%%%%%%%%%%%%%%%%%%%%%%%%%%%%%%%%%%%%%%%%%%%%%

\subsection{A simple criterion for global convergence of the Newton iteration}\label{subsect:simple_Newton}

We will now show how to ensure that a convex monotonic function $F$ has a unique zero, and that the
Newton method globally converges against this zero. 

\begin{theorem}\label{thm:simple_Newton}
Let $F:\ U\subseteq \R^n\to \R^n$, $n\geq 2$, 
be continuously differentiable, pointwise convex and monotonic on a convex open set $U$. If $[-2,n(n+3)]^n\subset U$, and
\begin{equation}\label{eq:thm_Newton_simple_assumption}
F'(z^{(j)})d^{(j)}\not\leq 0 \quad \text{ for all } j\in \{1,\ldots,n\},
\end{equation}
with 
\[
z^{(j)}:=-2 e_j + n(n+3) e_j',\quad \text{ and } \quad d^{(j)}:=e_j  - (n^2+3n+1)  e_j',
\]
then the following holds:
\begin{enumerate}[(a)]
\item $F$ is injective on $[-1,n]^n$, $F'(x)$ is invertible for all $x\in [-1,n]^n$, and for all $x,y\in [-1,n]^n$
\begin{equation}\label{eq:thm_simple_Newton_inverse_Lip}
\norm{x-y}_\infty\leq L \norm{F(x)-F(y)}_\infty, \quad \text{ and } \quad \norm{F'(x)^{-1}}_\infty \leq L,
\end{equation}
where 
\begin{equation}\label{eq:thm_Newton_def_L}
L:=(n+2) \left( \min_{j=1,\ldots,n}\ \max_{k=1,\ldots,n} e_k^T F'(z^{(j)})d^{(j)}\right)^{-1}>0.
\end{equation}
\item If, additionally, $F(0)\leq 0\leq F(\1)$, then there exists a unique 
\[
\hat x\in \left( \textstyle -\frac{1}{n-1},1+\frac{1}{n-1}\right)^n\quad \text{ with } \quad F(\hat x)=0,
\]
and this is the only zero of $F$ in $[-1,n]^n$.% because of the injectivity shown in (a).
The Newton iteration sequence
\begin{equation}\label{eq:Newton_iteration}
x^{(k+1)}:=x^{(k)}- F'(x^{(k)})^{-1}F(x^{(k)})\quad \text{with initial value $x^{(0)}:=\1$}
\end{equation}
is well defined (i.e., $F'(x^{(k)})$ is invertible in each step) and converges against $\hat x$.
Furthermore, for all $k\in \N$, $x^{(k)}\in (-1,n)^n$, and 
\[
0\leq M\hat x\leq M x^{(k+1)}\leq M x^{(k)}\leq Mx^{(0)}=(n+1)\1,
\]
where $M:=\1\1^T+ I_n\in \R^{n\times n}$. 
The rate of convergence of $x_k\to \hat x$ is superlinear. If $F'$ is locally Lipschitz, then the rate of convergence is quadratic.
\end{enumerate}
\end{theorem}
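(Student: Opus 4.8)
The plan is to treat part (a) as a direct analogue of Theorem~\ref{thm:uniqueness_simple}, and to obtain part (b) by recasting the iteration, via an affine change of variables, into the classical monotone Newton framework for convex, inverse-monotone maps.

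For part (a) I would rerun the monotonicity--convexity chain from the proof of Theorem~\ref{thm:uniqueness_simple}, now anchored at the new points $z^{(j)}$ with directions $d^{(j)}$. Fixing $j$, the key observation is that for every $x\in[-1,n]^n$ one has the two order relations $d^{(j)}\leq x-z^{(j)}$ and $e_j-e_j'\geq \tfrac{1}{n+2}(x-z^{(j)})$; the scalar $\tfrac{1}{n+2}$ (valid here because $n\geq 2$) is exactly the factor that produces the constant $L$ in \eqref{eq:thm_Newton_def_L}. Monotonicity \eqref{eq:assumption_monotonic} and convexity \eqref{eq:assumption_convex} then give
\[
\begin{aligned}
F'(x)(e_j-e_j')
&\geq \tfrac{1}{n+2}F'(x)\bigl(x-z^{(j)}\bigr)
\geq \tfrac{1}{n+2}\bigl(F(x)-F(z^{(j)})\bigr)\\
&\geq \tfrac{1}{n+2}F'(z^{(j)})\bigl(x-z^{(j)}\bigr)
\geq \tfrac{1}{n+2}F'(z^{(j)})d^{(j)}.
\end{aligned}
\]
Applying $\max_k e_k^T(\cdot)$ and using \eqref{eq:thm_Newton_simple_assumption} shows $\max_k e_k^T F'(x)(e_j-e_j')\geq L^{-1}$ on $[-1,n]^n$, so Lemma~\ref{lemma:aux_lemma} yields injectivity of $F$ and of each $F'(x)$ together with the bounds \eqref{eq:thm_simple_Newton_inverse_Lip}.

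For part (b) the guiding idea (as announced in the introduction) is that the extra condition turns $F$, after an affine substitution, into a convex inverse-monotone map to which the classical monotone Newton theorem applies. I would pass to the variable $u=Mx$ with $M=\1\1^T+I_n$ and study $G(u):=F(M^{-1}u)$; since Newton's method is affine invariant, the iterates transform as $u^{(k)}=Mx^{(k)}$, with $u^{(0)}=M\1=(n+1)\1$ and $G(u^{(0)})=F(\1)\geq 0$, $G(0)=F(0)\leq 0$. The target estimate $0\leq M\hat x\leq Mx^{(k+1)}\leq Mx^{(k)}\leq(n+1)\1$ is precisely the statement that the transformed iterates $u^{(k)}$ decrease monotonically in the standard order and stay trapped between the zero $u^\ast=M\hat x\geq 0$ and $(n+1)\1$. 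The heart of the proof is an induction showing, for all $k$, that $x^{(k)}\in[-1,n]^n$ (so part (a) applies and $F'(x^{(k)})$ is invertible), that $F(x^{(k)})\geq 0$, and that $Mx^{(k+1)}\leq Mx^{(k)}$ with $Mx^{(k+1)}\geq M\hat x$. The inequality $F(x^{(k+1)})\geq 0$ is immediate from convexity \eqref{eq:assumption_convex} and the Newton equation, since $F(x^{(k+1)})\geq F(x^{(k)})+F'(x^{(k)})(x^{(k+1)}-x^{(k)})=0$.

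The genuinely delicate points, and where I expect the main obstacle to lie, are (i) controlling the sign of the Newton correction in the order induced by $M$, i.e.\ that $MF'(x^{(k)})^{-1}F(x^{(k)})\geq 0$, and (ii) keeping the iterates inside $[-1,n]^n$ so that the uniform bound $\norm{F'(x^{(k)})^{-1}}_\infty\leq L$ from part (a) remains available. One cannot simply invoke pointwise inverse-positivity of $F'$, because a forward-monotone Jacobian need not have a nonnegative inverse; the argument must instead use convexity globally, combining $F(x^{(k)})\geq 0=F(\hat x)$ with the two-sided derivative estimates from the chain in part (a) to pin down the sign of the correction in the $M$-order and to confine the iterates to the trapping box. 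Once this trapping is established, existence and uniqueness of $\hat x$ follow by closing the loop: $Mx^{(k)}$ is monotonically decreasing and bounded below by $0$, hence convergent, so $x^{(k)}=M^{-1}(Mx^{(k)})$ converges to some $\hat x\in[-1,n]^n$; continuity of $F$ and $F'$ with the uniform invertibility from part (a) give $F(\hat x)=0$, and injectivity from part (a) makes it the only zero in $[-1,n]^n$. The sharper localization $\hat x\in\left(-\tfrac{1}{n-1},1+\tfrac{1}{n-1}\right)^n$ is then read off by inserting $\hat x$ into the trapping inequalities together with the sign conditions $F(0)\leq 0\leq F(\1)$ and the stability estimate. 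Finally, since the iterates remain in a compact region on which $\norm{F'(x)^{-1}}_\infty\leq L$ uniformly and $F(\hat x)=0$, the standard Newton--Kantorovich estimates yield Q-superlinear convergence from continuity of $F'$, and quadratic convergence when $F'$ is locally Lipschitz.
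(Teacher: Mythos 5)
Your overall architecture matches the paper's: part (a) via the monotonicity--convexity chain anchored at $z^{(j)}$ plus Lemma~\ref{lemma:aux_lemma}, and part (b) via the affine change of variables with $M=\1\1^T+I_n$, a trapping-box induction, and monotone convergence of $Mx^{(k)}$. However, there is a genuine gap, and it sits exactly at the point you yourself flag as ``the main obstacle'': you never actually establish $MF'(x)^{-1}\geq 0$, and with the estimate you derive in part (a) you \emph{cannot}. In your chain you weakened the upper bound $x-z^{(j)}\leq (n+2)e_j-n(n+2)e_j'$ to the direction $e_j-e_j'$, obtaining only $\max_k e_k^TF'(x)(e_j-e_j')\geq L^{-1}$. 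That suffices for Lemma~\ref{lemma:aux_lemma} and hence for part (a), but it discards precisely the factor $n$ that part (b) needs. The paper instead keeps the sharper direction and proves $\max_k e_k^TF'(x)(e_j-n e_j')\geq L^{-1}$ on $[-1,n]^n$ (Lemma~\ref{lemma:simple_Newton}(a)). From this sharper estimate one gets the cone property (Lemma~\ref{lemma:simple_Newton}(c)): if $F'(x)y\geq 0$ and $y\neq 0$, then $\min_j y_j>-\frac{1}{n}\norm{y}_\infty$ (proof: $y_j\leq-\frac{1}{n}\norm{y}_\infty$ would give $-\frac{n}{\norm{y}_\infty}y\geq e_j-ne_j'$, forcing a strictly positive entry of $-F'(x)y$). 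Only with this $\frac{1}{n}$-cone does the specific matrix $M$ work: $e_l^TMy=\sum_j y_j+y_l\geq \max_j y_j+n\min_j y_j\geq 0$, whence $F'(x)y\geq 0$ implies $My\geq 0$ and thus $MF'(x)^{-1}\geq 0$. With the weaker direction $e_j-e_j'$ the cone argument yields nothing beyond $\min_j y_j>-\norm{y}_\infty$, which cannot produce $My\geq 0$. Your appeal to ``using convexity globally \ldots to pin down the sign of the correction in the $M$-order'' is not a proof of this step; it is the step.

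A second, smaller omission: your trapping induction needs the iterate $x^{(k+1)}$ (not just the previous one) to remain in $[-1,n]^n$, since the bounds of part (a) and the cone property are only available there. The paper handles this with a continuity/contradiction argument along the segment $x^{(k+t)}=x^{(k)}-tF'(x^{(k)})^{-1}F(x^{(k)})$, $t\in(0,1]$: any first exit point of the box would, by Lemma~\ref{lemma:aux_Newton_monotone} (which gives $F(x^{(k+t)})\geq 0$ and $0\leq Mx^{(k+t)}\leq M\1$), the convexity inequality $F'(x^{(k+t)})x^{(k+t)}\geq F(x^{(k+t)})-F(0)\geq 0$, and the cone property, have to lie in the open box $(-1,n)^n$ --- a contradiction. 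Your proposal asserts the confinement but supplies no mechanism; note that this argument again runs through the $\frac{1}{n}$-cone estimate, so it is blocked by the same missing ingredient. Once these two points are repaired (keep the direction $e_j-ne_j'$ in part (a)'s chain, derive the cone property, and run the segment argument), the rest of your outline --- monotone convergence of $Mx^{(k)}$, $F(\hat x)=0$ by continuity and invertibility, uniqueness from (a), and the standard superlinear/quadratic rates --- goes through as in the paper.
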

To prove Theorem \ref{thm:simple_Newton} we will first show the following lemma.

\begin{lemma}\label{lemma:simple_Newton}
Under the assumptions and with the notations of Theorem~\ref{thm:simple_Newton}, the following holds:
\begin{enumerate}[(a)]
\item For all $x\in [-1,n]^n$, 
\[
\max_{k=1,\ldots,n} e_k^T F'(x)(e_j-n e_j')\geq L^{-1}.
\]
\item $F$ is injective on $[-1,n]^n$, $F'(x)\in \R^{n\times n}$ is invertible for all $x\in [-1,n]^n$, and, for all $x,y\in [-1,n]^n$,
\[
\norm{x-y}_\infty\leq L \norm{F(x)-F(y)}_\infty, \quad \text{ and } \quad \norm{F'(x)^{-1}}_\infty \leq L.
\]
\item For all $x\in [-1,n]^n$, and all $0\neq y\in \R^n$ 
\[
F'(x)y\geq 0 \quad \text{ implies } \quad \max_{j=1,\ldots,n} y_j=\norm{y}_\infty \ \text{ and } \
\min_{j=1,\ldots,n} y_j> -\frac{1}{n}\norm{y}_\infty.
\]
\item $M$ is invertible, $M^{-1}=I_n-\frac{1}{n+1}\1 \1^T$. For all $x\in [-1,n]^n$, and $y\in \R^n$
\[
F'(x)y\geq 0 \quad \text{ implies } \quad My\geq 0,
\]
and thus $M F'(x)^{-1}\geq 0$.
\end{enumerate}
\end{lemma}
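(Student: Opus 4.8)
The plan is to prove the four parts in order, with part (a) furnishing the one quantitative estimate from which parts (b)--(d) are deduced.

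For part (a), I would adapt the proof of Theorem~\ref{thm:uniqueness_simple}, re-tuning the constants to the present $z^{(j)}$, $d^{(j)}$, and $L$. The first step is the componentwise algebraic sandwich
\[
d^{(j)} \leq x - z^{(j)} \leq (n+2)(e_j - n e_j') \qquad \text{for all } x\in [-1,n]^n,
\]
which I would check by separating the $j$-th entry from the off-diagonal entries and using $x_i\in[-1,n]$; this is exactly where the specific values $-2$, $n(n+3)$, and $n^2+3n+1$ enter. Combining the upper bound with monotonicity \eqref{eq:assumption_monotonic}, then convexity \eqref{eq:assumption_convex} applied at both $x$ and $z^{(j)}$, and finally the lower bound with monotonicity, gives the chain
\[
F'(x)(e_j - n e_j') \geq \tfrac{1}{n+2} F'(x)(x - z^{(j)}) \geq \tfrac{1}{n+2}\bigl(F(x)-F(z^{(j)})\bigr) \geq \tfrac{1}{n+2} F'(z^{(j)})(x-z^{(j)}) \geq \tfrac{1}{n+2}F'(z^{(j)})d^{(j)}.
\]
Applying $\max_k e_k^T$ and the definition \eqref{eq:thm_Newton_def_L} of $L$ then yields the claim.

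Part (b) is then immediate: since $n\geq 1$ gives $e_j - e_j' \geq e_j - n e_j'$, monotonicity and part (a) yield $\max_k e_k^T F'(x)(e_j-e_j')\geq L^{-1}$ on $[-1,n]^n$, which is precisely hypothesis \eqref{eq:max_lemma_aux}; injectivity, invertibility (as $m=n$), and both stability bounds then follow directly from Lemma~\ref{lemma:aux_lemma}.

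Part (c) is the crux, and the step I expect to be the main obstacle, because both conclusions have to be squeezed out of the single scalar inequality of part (a) by picking the right comparison vector and scaling. Given $0\neq y$ with $F'(x)y\geq 0$, I would argue by contradiction in each case. If $\max_j y_j < \norm{y}_\infty$, then $\norm{y}_\infty=-\min_j y_j$; taking $l$ at the minimum and $w:=-y/\norm{y}_\infty$, one verifies $w\geq e_l-ne_l'$, so part (a) forces $F'(x)w$, hence $F'(x)(-y)$, to have a positive entry, contradicting $F'(x)y\geq 0$. For the second bound, assuming $\min_j y_j\leq -\tfrac1n\norm{y}_\infty$ and again taking $l$ at the minimum, the delicate point is that the factor $\tfrac1n$ is exactly the one for which $-y\geq\tfrac1n(e_l-ne_l')$ holds: the $l$-th entry needs $\lambda\leq 1/n$ while the off-diagonal entries need $\lambda\geq 1/n$, forcing $\lambda=1/n$. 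Part (a) then again produces a positive entry of $F'(x)(-y)$ and a contradiction. This matching of constants is the origin of the threshold $-1/n$ in the statement.

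For part (d), I would first confirm $M^{-1}=I_n-\tfrac{1}{n+1}\1\1^T$ by direct multiplication, using $\1^T\1=n$ and $\1\1^T\1\1^T=n\1\1^T$. For the implication I would write $(My)_j=y_j+\sum_k y_k$: the case $y=0$ is trivial, and for $y\neq 0$ the two bounds of part (c) give $\sum_k y_k > \tfrac1n\norm{y}_\infty$ (one entry equals $\norm{y}_\infty$, the remaining $n-1$ each exceed $-\tfrac1n\norm{y}_\infty$) while $y_j>-\tfrac1n\norm{y}_\infty$, so $(My)_j>0$ for every $j$ and $My\geq 0$. Finally, applying this to $y=F'(x)^{-1}e_k$, for which $F'(x)y=e_k\geq 0$, shows every column of $MF'(x)^{-1}$ is nonnegative, hence $MF'(x)^{-1}\geq 0$.
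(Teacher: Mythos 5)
Your proposal is correct and follows essentially the same route as the paper's own proof: the same sandwich $d^{(j)}\leq x-z^{(j)}\leq (n+2)(e_j-ne_j')$ combined with monotonicity--convexity--convexity--monotonicity for (a), the same a fortiori reduction to Lemma~\ref{lemma:aux_lemma} for (b), the same comparison vector $e_l-ne_l'$ scaled by $n/\norm{y}_\infty$ for (c), and the same entrywise sum estimate for (d). The only cosmetic differences are that you split (c) into two contradiction arguments where the paper uses one contraposition, and in (d) you test $MF'(x)^{-1}\geq 0$ on the columns $F'(x)^{-1}e_k$ rather than on arbitrary $y\geq 0$.
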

\begin{proof}
\begin{enumerate}[(a)]
\item Let $j\in \{1,\ldots,n\}$. Using $z^{(j)}=-2 e_j + n(n+3) e_j'$, we have that for all $x\in [-1,n]^n$ 
\begin{align*}
d^{(j)}=e_j  - (n^2+3n+1)  e_j'
%= \left( -1 - (-2)\right)e_j + \left( -1 - n(n+3)\right)e_j'\\
\leq x-z^{(j)}\leq
%\left(n-(-2)\right) e_j + \left( n - n(n+3)\right)e_j'=
(n+2)e_j - n(n+2)e_j'
\end{align*}
and thus it follows from monotonicity \eqref{eq:assumption_monotonic} and convexity \eqref{eq:assumption_convex} that
\begin{align*}
F'(x) \left(e_j - n e_j'\right)&=\frac{1}{n+2} F'(x) \left((n+2)e_j - n(n+2)e_j'\right)\\
 &\geq  \frac{1}{n+2} F'(x) \left(x - z^{(j)}\right)
\geq \frac{1}{n+2} \left( F(x) - F(z^{(j)})  \right)\\
&\geq \frac{1}{n+2} F'(z^{(j)}) \left(x - z^{(j)}\right)
\geq \frac{1}{n+2} F'(z^{(j)}) d^{(j)},
\end{align*}
which proves (a) using the definition of $L$ in \eqref{eq:thm_Newton_def_L}.
\item Since (a) implies a fortiori that for all $x\in [-1,n]^n$ 
\[
\max_{k=1,\ldots,n} e_k^T F'(x)(e_j- e_j')\geq L^{-1},
\]
the assertion (b) follows from lemma~\ref{lemma:aux_lemma}.
\item Let $x\in [-1,n]^n$, and $0\neq y\in \R^n$. If there exists an index $j\in \{1,\ldots,n\}$ with $y_j\leq -\frac{1}{n}\norm{y}_\infty$, then $y\leq -\frac{1}{n}\norm{y}_\infty e_j + \norm{y}_\infty e_j'$, so that
\[
- \frac{n}{\norm{y}_\infty } y  \geq e_j -n e_j', \quad \text{ and thus, by (a),} \quad - \frac{n}{\norm{y}_\infty } F'(x) y\not\leq 0.
\] 
By contraposition, this shows that 
\[
F'(x)y\geq 0 \quad \text{ implies } \quad \min_{j=1,\ldots,n} y_j> -\frac{1}{n}\norm{y}_\infty,
\]
which also shows that $\norm{y}_\infty= \max_{j=1,\ldots,n} y_j$.
\item It is easily checked that
\begin{align*}
\left(I_n-\frac{1}{n+1}\1 \1^T \right) \left(\1 \1^T+I_n\right)
%=\1 \1^T + I_n -\frac{1}{n+1}\1 \1^T \1 \1^T -\frac{1}{n+1}\1 \1^T\\
%=\frac{n}{n+1}\1 \1^T + I_n -\frac{1}{n+1}\1 (n) \1^T 
=I_n,
\end{align*}
which shows that $M$ is invertible and $M^{-1}=I_n-\frac{1}{n+1}\1 \1^T$

Moreover, using (c) it follows that $F'(x)y\geq 0$ implies that for all $k\in \N$, 
\[
\sum_{j=1}^n y_j + y_k\geq \max_{j=1,\ldots,n} y_j + n \min_{j=1,\ldots,n} y_j\geq 0.
\]
so that $F'(x)y\geq 0$ implies $My=(\1 \1^T+I_n)y\geq 0$. This also shows that
\[
F'(x) F'(x)^{-1} y=y\geq 0 \quad \text{ implies } \quad M F'(x)^{-1} y\geq 0,
\]
and thus $M F'(x)^{-1}\geq 0$.
\hfill $\Box$
\end{enumerate}
\end{proof}

Note that by lemma~\ref{lemma:simple_Newton}(d), $\tilde F(x):=F(M^{-1}x)$ is a convex function with Collatz monotone derivative \cite{collatz1952aufgaben}, i.e. $\tilde F'(x)^{-1}= M F'(M^{-1} x)^{-1}\geq 0$. If the Newton iterates do not leave the region where convexity and Collatz monotony holds, then classical results on monotone Newton methods (cf., e.g., Ortega and Rheinboldt \cite[Thm.~13.3.4]{ortega1970iterative}) yield global Newton convergence for $\tilde F$, and thus for $F$ since the Newton method is invariant under linear transformation. The following lemma utilizes some arguments from the classical result \cite[Thm.~13.3.4]{ortega1970iterative} for our situation. 

\begin{lemma}\label{lemma:aux_Newton_monotone}
Let $F:\ U\subseteq \R^n\to \R^n$ be continuously differentiable and pointwise convex on a convex open set $U$ containing zero, and let $M\in \R^{n\times n}$. 
We assume that for some point $x\in U$, $F'(x)\in \R^{n\times n}$ is invertible, 
\begin{equation}
\label{eq:aux_Newton_monotone_ass}
MF'(x)^{-1}\geq 0,\quad F(x)\geq 0\geq F(0), \quad \text{ and } \quad Mx\geq 0.
\end{equation}
Then for all $t\in [0,1]$
\[
x^{(t)}:=x- t F'(x)^{-1} F(x) 
\]
fulfills $0\leq Mx^{(t)}\leq Mx$. Moreover, if $x^{(t)}\in U$ then $F(x^{(t)})\geq 0$.
\end{lemma}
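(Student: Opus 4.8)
The plan is to establish the two bounds on $Mx^{(t)}$ separately and then handle the nonnegativity of $F(x^{(t)})$ by a final application of convexity. Writing $x^{(t)}=x-t\,F'(x)^{-1}F(x)$, we have
\[
Mx^{(t)}=Mx-t\,MF'(x)^{-1}F(x).
\]
The upper bound is immediate: since $MF'(x)^{-1}\geq 0$ and $F(x)\geq 0$ by \eqref{eq:aux_Newton_monotone_ass}, the product $MF'(x)^{-1}F(x)\geq 0$, and subtracting the nonnegative multiple by $t\geq 0$ gives $Mx^{(t)}\leq Mx$ for all $t\in[0,1]$.

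For the lower bound I would first establish $Mx^{(1)}\geq 0$, i.e.\ $Mx\geq MF'(x)^{-1}F(x)$. The key step is to invoke convexity \eqref{eq:assumption_convex} at $y=0$, which yields $F(0)-F(x)\geq -F'(x)x$, hence $F'(x)x\geq F(x)-F(0)\geq F(x)$, where the last inequality uses $F(0)\leq 0$. I then multiply this inequality on the left by the nonnegative matrix $MF'(x)^{-1}$, obtaining $Mx=MF'(x)^{-1}F'(x)x\geq MF'(x)^{-1}F(x)$, that is, $Mx^{(1)}\geq 0$. Since $t\mapsto Mx^{(t)}$ is affine, $Mx^{(t)}=(1-t)\,Mx+t\,Mx^{(1)}$, and both endpoints $Mx\geq 0$ and $Mx^{(1)}\geq 0$ are nonnegative, so the convex combination is nonnegative for every $t\in[0,1]$, giving $Mx^{(t)}\geq 0$.

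For the last assertion, assuming $x^{(t)}\in U$, I apply convexity \eqref{eq:assumption_convex} once more, now with $y=x^{(t)}$: using $x^{(t)}-x=-t\,F'(x)^{-1}F(x)$ we get $F(x^{(t)})-F(x)\geq F'(x)(x^{(t)}-x)=-t\,F(x)$, so that $F(x^{(t)})\geq (1-t)F(x)\geq 0$ because $F(x)\geq 0$ and $1-t\geq 0$. Note that the hypothesis $x^{(t)}\in U$ enters precisely here, since \eqref{eq:assumption_convex} is only available for points of $U$.

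The argument is essentially careful bookkeeping built on two uses of the convexity inequality, and I expect the only genuinely delicate point to be the lower bound $Mx^{(t)}\geq 0$. The temptation is to apply $F'(x)^{-1}$ directly to $F'(x)x\geq F(x)$, but this would require $F'(x)^{-1}\geq 0$, which is \emph{not} assumed; the correct move is to multiply by the composite matrix $MF'(x)^{-1}$, whose nonnegativity is exactly the Collatz-monotonicity hypothesis supplied by \eqref{eq:aux_Newton_monotone_ass}. Everything else reduces to sign tracking and the affine (hence convex-combination) structure of $t\mapsto Mx^{(t)}$.
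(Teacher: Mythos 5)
Your proof is correct and takes essentially the same route as the paper's: both arguments rest on the convexity inequality \eqref{eq:assumption_convex} at $y=0$ multiplied on the left by the nonnegative matrix $MF'(x)^{-1}$, the sign fact $MF'(x)^{-1}F(x)\geq 0$ for the upper bound, and a second application of convexity at $y=x^{(t)}$ for the final assertion. The only cosmetic difference is organizational: you prove the lower bound at the endpoint $t=1$ and then interpolate via the affine identity $Mx^{(t)}=(1-t)Mx+tMx^{(1)}$, whereas the paper derives the (equivalent) inequality $Mx^{(t)}\geq(1-t)Mx$ directly for each $t$ in a single chain.
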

\begin{proof}
The assumptions \eqref{eq:aux_Newton_monotone_ass} and the convexity \eqref{eq:assumption_convex} yield that for all $t\in [0,1]$
\begin{align*}
0&\leq - t M F'(x)^{-1} F(0)\\
&= Mx^{(t)} - \left(M x- tM F'(x)^{-1} F(x)\right) - tM F'(x)^{-1}F(0)\\
&= Mx^{(t)} - M x + t M F'(x)^{-1} \left( F(x) - F(0)\right)\\
&\leq Mx^{(t)} - M x + tM F'(x)^{-1} F'(x) ( x - 0)=Mx^{(t)}-(1-t)Mx.
\end{align*}
Moreover, 
\[
Mx-Mx^{(t)}=t M F'(x)^{-1} F(x)\geq 0,
\]
so that $Mx\geq Mx^{(t)}\geq (1-t)Mx\geq 0$ is proven.

If $x^{(t)}\in U$, then we also obtain from the convexity assumption \eqref{eq:assumption_convex} that
\begin{align*}
F(x^{(t)})-F(x)\geq F'(x)( x^{(t)} - x)=-t F(x),
\end{align*}
which shows that $F(x^{(t)})\geq (1-t)F(x)\geq 0$.\hfill $\Box$
\end{proof}

\begin{proof}[of Theorem \ref{thm:simple_Newton}]
Theorem \ref{thm:simple_Newton}(a) has been proven in lemma~\ref{lemma:simple_Newton}(b).

To prove (b), let $x^{(k)}\in (-1,n)^n$ with $F(x^{(k)})\geq 0$ and $0\leq M x^{(k)}\leq M\1$. Then, by (a), $F'(x^{(k)})\in \R^{n\times n}$ is invertible, so that 
\[
x^{(k+t)}:=x^{(k)}- t F'(x^{(k)})^{-1}F(x^{(k)})\in \R^n, \quad t\in [0,1],
\]
is well defined. 

We will prove that $x^{(k+1)}\in (-1,n)^n$. We argue by contradiction, and assume that this is not the case. Then, by continuity, there exists $t\in (0,1]$ with $x^{(k+t)}\in [-1,n]^n\setminus (-1,n)^n$ and, by lemma \ref{lemma:aux_Newton_monotone},
\begin{equation}\label{eq:simple_Newton_Mxt}
F(x^{(k+t)})\geq 0 \quad \text{ and } \quad 0\leq M x^{(k+t)}\leq M x^{(k)}\leq M\1.
\end{equation}
Convexity \eqref{eq:assumption_convex} then yields that
\begin{align*}
F'(x^{(k+t)})(x^{(k+t)}-0)\geq
F(x^{(k+t)})-F(0)\geq 0,
\end{align*}
and using lemma~\ref{lemma:simple_Newton}(c) this would imply that
\begin{align}\label{eq:min_xkplust}
\min_{j=1,\ldots,n}\, x^{(k+t)}_j&>-\frac{1}{n}\, \max_{j=1,\ldots,n}\, x^{(k+t)}_j \geq -1.
\end{align}
For all $l\in \{1,\ldots,n\}$, we obtain from \eqref{eq:simple_Newton_Mxt} and \eqref{eq:min_xkplust}
\begin{align*}
 n+1&= e_l^T M\1 \geq e_l^T M x^{(k+t)}=x^{(k+t)}_l + \sum_{j=1}^n x^{(k+t)}_j%\\
 %&\geq 2 x^{(k+t)}_l + (n-1) \min_{j=1,\ldots,n}\, x^{(k+t)}_j
 > 2 x^{(k+t)}_l - (n-1).
\end{align*}
Hence, $\max_{j=1,\ldots,n}\, x^{(k+t)}_j< n$, so that $x^{(k+t)}\in (-1,n)^n$.
Since this contradicts $x^{(k+t)}\in [-1,n]^n\setminus (-1,n)^n$, we have proven that $x^{(k+1)}\in (-1,n)^n$.

Using lemma \ref{lemma:aux_Newton_monotone} again, this shows that for all 
\[
x^{(k)}\in (-1,n)^n\ \text{ with } F(x^{(k)})\geq 0, \ \text{ and } \ 0\leq M x^{(k)}\leq M\1,
\]
the next Newton iterate $x^{(k+1)}$ is well-defined and also fulfills 
\[
x^{(k+1)}\in (-1,n)^n\ \text{ with } F(x^{(k+1)})\geq 0, \ \text{ and } \ 0\leq M x^{(k+1)}\leq M x^{(k)}\leq M\1.
\]
Hence, for $x^{(0)}:=\1$, the Newton algorithm produces a well-defined 
sequence $x^{(k)}\in (-1,n)^n$ for which $Mx^{(k)}$ is monotonically non-increasing and bounded. Hence, $(Mx^{(k)})_{k\in \N}$ and thus also $(x^{(k)})_{k\in \N}$ converge.
We define
\[
\hat x:=\lim_{k\to \infty} x^{(k)}\in [-1,n]^n.
\]
Since $F$ is continuously differentiable and $F'(\hat x)$ is invertible, it follows from the Newton iteration formula \eqref{eq:Newton_iteration} that $F(\hat x)=0$.  
Also, the monotone convergence of $(Mx^{(k)})_{k\in \N}$ shows that
\[
0\leq M\hat x \leq Mx^{(k)} \leq M\1 \quad \text{ for all } k\in \N.
\]
Moreover, since this is the standard Newton iteration, the convergence speed is superlinear and the speed is quadratic if $F'$ is Lipschitz
continuous in a neighbourhood of $\hat x$. 

It only remains to show that $\hat x\in (-\frac{1}{n-1},\frac{n}{n-1})^n\subset (-1,2)^n$. For this, we use the convexity to obtain
\begin{align*}
F'(\hat x)(\hat x-0)&\geq F(\hat x)-F(0)\geq 0,\quad \\\
F'(\1)(\1-\hat x)&\geq F(1)-F(\hat x)\geq 0,
\end{align*}
which then implies by lemma~\ref{lemma:simple_Newton}(c) that
\begin{align*}
\min_{j=1,\ldots,n}\, \hat x_j &> -\frac{1}{n} \max_{j=1,\ldots,n}\, \hat x_j,\\
\min_{j=1,\ldots,n}\, (1-\hat x_j) &> -\frac{1}{n} \max_{j=1,\ldots,n}\, (1-\hat x_j).
\end{align*}
From this we obtain that
\begin{align}
\label{eq_simple_newton_xhat_bounds}
\min_{j=1,\ldots,n}\, \hat x_j &> -\frac{1}{n} \max_{j=1,\ldots,n}\, \hat x_j
%=\frac{1}{n} \min_{j=1,\ldots,n}\, (-\hat x_j)
=\frac{1}{n} \min_{j=1,\ldots,n}\, (1-\hat x_j)-\frac{1}{n}\\
\nonumber
&>-\frac{1}{n^2} \max_{j=1,\ldots,n}\, (1-\hat x_j)-\frac{1}{n}
%= -\frac{1}{n^2} \max_{j=1,\ldots,n}\, (-\hat x_j) -\frac{1}{n^2} - \frac{1}{n}\\  
= \frac{1}{n^2} \min_{j=1,\ldots,n}\, \hat x_j -\frac{1}{n^2} - \frac{1}{n},
\end{align}
which yields
%\begin{align*}
%\left( 1- \frac{1}{n^2}\right) \min_{j=1,\ldots,n}\, \hat x_j> -\frac{1}{n^2} - \frac{1}{n}
%\Longrightarrow \left( n^2- 1\right) \min_{j=1,\ldots,n}\, \hat x_j> -1  - n=-(1+n)\\
%\Longrightarrow  \min_{j=1,\ldots,n}\, \min_{j=1,\ldots,n}\, \hat x_j> -\frac{1}{n-1}\geq -1
%\end{align*}
$\min_{j=1,\ldots,n}\, \hat x_j> -\frac{1}{n-1}$. Using \eqref{eq_simple_newton_xhat_bounds} again, we then obtain
\begin{align*}
-\frac{1}{n} \max_{j=1,\ldots,n}\, \hat x_j &> \frac{1}{n^2} \min_{j=1,\ldots,n}\, \hat x_j -\frac{1}{n^2} - \frac{1}{n}
%> -\frac{1}{n^2(n-1)} -\frac{1}{n^2} - \frac{1}{n}\\
%&= -\frac{1+(n-1)+n(n-1)}{n^2(n-1)}
>-\frac{1}{n-1},
\end{align*}
which shows $\max_{j=1,\ldots,n}\, \hat x_j<\frac{n}{n-1}$.
\hfill $\Box$
\end{proof}

%%%%%%%%%%%%%%%%%%%%%%%%%%%%%%%%%%%%%%%%%%%%%%%%%%%%%%%

\subsection{A result with tighter domain assumptions}\label{subsect:tight}

Our results in subsections \ref{subsect:simple_uniqueness} and \ref{subsect:simple_Newton}
require the considered function $F$ to be defined (and convex and monotonic) on a much larger set than $[0,1]^n$.
For some applications (such as the inverse coefficient problem in section \ref{sect:inverse_bvp}), the following more technical variant of Theorem \ref{thm:simple_Newton} is useful, as it allows us treat the case where the domain of definition is an arbitrarily small neighbourhood of $[0,1]^n$.

\begin{theorem}\label{thm:tight_Newton}
Let $\epsilon>0$ and $c\geq 2+\frac{2}{\epsilon}$. %(-\frac{1+a}{cn},1+a)^n
Let $F:\ U\subseteq \R^n\to \R^n$, $n\geq 2$, 
be continuously differentiable, pointwise convex and monotonic on a convex open set $U$.
If $[-\frac{1+\epsilon}{cn}-\frac{\epsilon}{2cn},1+2\epsilon]^n\subset U$, and
\begin{equation}\label{eq:thm_Newton_tight_assumption}
F'(z^{(j,k)})d^{(j)}\not\leq 0 \quad \text{ for all } j\in \{1,\ldots,n\},\ k=\{1,\ldots,K\},
\end{equation}
where $K:=\operatorname{ceil}(\frac{2cn}{\epsilon}\left(1+\epsilon + \frac{1+\epsilon}{cn}\right))\in \N$,
\begin{align}
\label{eq:Newton_tight_def_z}
z^{(j,k)}&:=\left( -\frac{1+\epsilon}{cn}+(k-2)\frac{\epsilon}{2cn} \right) e_j + \left( 1+2\epsilon  \right)e_j',\\
\label{eq:Newton_tight_def_d}
d^{(j)}&:=\frac{1}{2} e_j  - \frac{1+\epsilon+cn+2\epsilon cn}{\epsilon} e_j'.%\in \R^n,
\end{align}
%
%Note that $K\leq \frac{2cn}{\epsilon}\left(1+\epsilon + \frac{1+\epsilon}{cn}\right)+1$, so that
%\begin{align*}
%z^{(j,k)}_j&\leq  -\frac{1+\epsilon}{cn}+(K-2)\frac{\epsilon}{2cn}
%\leq -\frac{1+\epsilon}{cn}+\left(\frac{2cn}{\epsilon}\left(1+\epsilon + \frac{1+\epsilon}{cn}\right)-1\right)\frac{\epsilon}{2cn}\\
%&\leq 1+\epsilon - \frac{\epsilon}{2cn},
%\end{align*}
%so that indeed $z^{(j,k)}\in U$.
%
then the following holds on $O:=(-\frac{1+\epsilon}{cn},1+\epsilon)^n\supset [0,1]^n$.
\begin{enumerate}[(a)]
\item $F$ is injective on $\overline O$. For all $x,y\in \overline O$, $F'(x)$ is invertible and
\[
\norm{x-y}_\infty\leq L \norm{F(x)-F(y)}_\infty, \quad \text{ and } \quad \norm{F'(x)^{-1}}_\infty \leq L,
\]
where 
\begin{equation}%\label{eq:thm_Newton_def_L}
L:= \left( \min_{\substack{j=1,\ldots,n,\\ k=1,\ldots,K}}\ \max_{l=1,\ldots,n} e_l^T F'(z^{(j,k)})d^{(j)}\right)^{-1}>0.
\end{equation}
\item If, additionally, $F(0)\leq 0\leq F(\1)$, then there exists a unique 
\[
\hat x\in \overline O \quad \text{ with } \quad F(\hat x)=0.
\]
The Newton iteration sequence
\begin{equation}%\label{eq:Newton_iteration}
x^{(i+1)}:=x^{(i)}- F'(x^{(i)})^{-1}F(x^{(i)})\quad \text{with initial value $x^{(0)}:=\1$}
\end{equation}
is well defined (i.e., $F'(x^{(i)})$ is invertible in each step) and converges against $\hat x$.
For all $i\in \N$
\begin{align*}
x^{(i)}&\in O , \quad \text{ and } \quad
0\leq M\hat x\leq M x^{(i+1)}\leq M x^{(i)}\leq Mx^{(0)}=(1+cn)\1,
\end{align*}
where $M:=\1 \1^T+(1+(c-1)n)I_n\in \R^{n\times n}$. 
The rate of convergence of $x_i\to \hat x$ is superlinear. If $F'$ is locally Lipschitz then the rate of convergence is quadratic.
\end{enumerate}
\end{theorem}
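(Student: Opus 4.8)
My plan is to imitate the proof of Theorem~\ref{thm:simple_Newton} essentially line by line, making only two changes: the single evaluation point $z^{(j)}$ per coordinate is replaced by the grid $z^{(j,1)},\dots,z^{(j,K)}$ marching along the $j$-th coordinate, and $M=\1\1^T+I_n$ is replaced by $M=\1\1^T+(1+(c-1)n)I_n$. First I would prove an analogue of Lemma~\ref{lemma:simple_Newton} consisting of four parts: (a) a uniform diagonal-dominance bound $\max_{l=1,\dots,n} e_l^T F'(x)d^{(j)}\ge L^{-1}$ for every $x\in\overline O$ and every $j$; (b) injectivity of $F$, invertibility of $F'(x)$ and the Lipschitz estimates on $\overline O$; (c) the cone control that $F'(x)y\ge 0$ forces $\max_j y_j=\norm{y}_\infty$ and $\min_j y_j>-\tfrac1{cn}\norm{y}_\infty$; and (d) the identity $M^{-1}=\tfrac1{1+(c-1)n}\bigl(I_n-\tfrac1{1+cn}\1\1^T\bigr)$ together with $MF'(x)^{-1}\ge0$. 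Part (b) then follows from (a) via Lemma~\ref{lemma:aux_lemma}, since $d^{(j)}\le e_j-e_j'$ (note $A:=\tfrac{1+\epsilon+cn+2\epsilon cn}{\epsilon}\ge1$) and hence the hypothesis \eqref{eq:max_lemma_aux} holds a fortiori; and Theorem~\ref{thm:tight_Newton}(a) is exactly this (b).

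The engine is (a): in the maximising row $l$ it reads $(F'(x))_{lj}\ge 2A\sum_{i\ne j}(F'(x))_{li}$, a strong diagonal dominance. Given $F'(x)y\ge0$, testing $0\le e_l^T F'(x)y$ with $y_i\le\norm{y}_\infty$ then gives $(F'(x))_{lj}\bigl(y_j+\tfrac1{2A}\norm{y}_\infty\bigr)\ge0$, hence $y_j\ge-\tfrac1{2A}\norm{y}_\infty$ for every $j$; since $2A\ge cn$ this yields (c) with the threshold $\tfrac1{cn}$ needed below, and also $\max_j y_j=\norm{y}_\infty$. For (d) I would repeat the computation of Lemma~\ref{lemma:simple_Newton}(d): with $\gamma:=1+(c-1)n$ one checks $M^{-1}$ directly, and for $F'(x)y\ge0$ one has $(My)_k=\sum_l y_l+\gamma y_k\ge \max_l y_l+cn\,\min_l y_l>0$ by (c), so $My\ge0$ and thus $MF'(x)^{-1}\ge0$.

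Part (b) of the theorem is then the monotone-Newton induction of Theorem~\ref{thm:simple_Newton}(b), driven by Lemma~\ref{lemma:aux_Newton_monotone}. Starting from $x^{(0)}=\1$, which satisfies $F(\1)\ge0\ge F(0)$ and $M\1=(1+cn)\1$, I would show inductively that if $x^{(i)}\in O$ with $F(x^{(i)})\ge0$ and $0\le Mx^{(i)}\le(1+cn)\1$, then $x^{(i+1)}$ has the same properties. As in the simple case this is by contradiction: were $x^{(i+1)}\notin O$, continuity produces $t\in(0,1]$ with $x^{(i+t)}\in\overline O\setminus O$, for which Lemma~\ref{lemma:aux_Newton_monotone} gives $F(x^{(i+t)})\ge0$ and $0\le Mx^{(i+t)}\le(1+cn)\1$; convexity yields $F'(x^{(i+t)})x^{(i+t)}\ge F(x^{(i+t)})-F(0)\ge0$, so (c) forces $\min_j x^{(i+t)}_j>-\tfrac1{cn}\max_j x^{(i+t)}_j\ge-\tfrac{1+\epsilon}{cn}$, while $Mx^{(i+t)}\le(1+cn)\1$ forces $\max_j x^{(i+t)}_j<1+\epsilon$ (here $c\ge2+\tfrac2\epsilon$ is used); together these give $x^{(i+t)}\in O$, a contradiction. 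Hence the iterates stay in $O$, $(Mx^{(i)})$ is non-increasing and bounded below, so $x^{(i)}\to\hat x\in\overline O$ with $F(\hat x)=0$; uniqueness on $\overline O$ comes from (b), the membership $\hat x\in\overline O$ is immediate (no finer localisation is claimed, unlike Theorem~\ref{thm:simple_Newton}), and the rates are the classical superlinear/quadratic Newton rates.

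The main obstacle is (a). Because the $z^{(j,k)}$ are confined to the thin slab $[-\tfrac{1+\epsilon}{cn}-\tfrac{\epsilon}{2cn},1+2\epsilon]^n$, the single-sandwich argument of Theorem~\ref{thm:simple_Newton} cannot reach points $x\in\overline O$ whose $j$-th coordinate is small, which is precisely why the grid of $K=\operatorname{ceil}(\tfrac{2cn}{\epsilon}(1+\epsilon+\tfrac{1+\epsilon}{cn}))$ points with spacing $\tfrac{\epsilon}{2cn}$ is introduced. Transferring the diagonal dominance from the $z^{(j,k)}$ (where it is assumption \eqref{eq:thm_Newton_tight_assumption}) to a general $x$ relies on convexity \eqref{eq:assumption_convex}, which makes $t\mapsto F'(z^{(j,k)}+t\,d^{(j)})d^{(j)}$ non-decreasing, together with monotonicity \eqref{eq:assumption_monotonic}; the crucial feature is that the enormous off-diagonal weight $-A$ of $d^{(j)}$ and the grid spacing are tuned against each other through the identity $\tfrac{(1+2\epsilon)+(1+\epsilon)/cn}{2A}=\tfrac{\epsilon}{2cn}$, i.e.\ one step in the $j$-th coordinate corresponds to the entire admissible off-diagonal gap $(1+2\epsilon)-x_i$. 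Carrying out this covering so that \emph{some} $z^{(j,k)}$ serves every $x\in\overline O$, while keeping all grid points inside the prescribed domain, is the delicate constant-bookkeeping that forms the technical heart of the proof; everything else is a faithful rerun of Section~\ref{subsect:simple_Newton}.
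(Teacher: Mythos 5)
Your overall architecture (the grid of evaluation points, the modified $M$, a four-part analogue of Lemma~\ref{lemma:simple_Newton}, then the monotone-Newton induction via Lemma~\ref{lemma:aux_Newton_monotone}) is exactly the paper's, but the engine of your plan --- part (a) of your lemma --- is false as stated, and this is a genuine gap. You claim that the grid-point hypothesis transfers to $\max_{l} e_l^T F'(x)d^{(j)}\ge L^{-1}$ at \emph{every} $x\in\overline O$, i.e.\ with the same extreme direction $d^{(j)}=\tfrac12 e_j-Ae_j'$, $A:=\tfrac{1+\epsilon+cn+2\epsilon cn}{\epsilon}$. The correct transferable statement (the one the paper proves) is weaker: $\max_l e_l^T F'(x)\left(e_j-cn\,e_j'\right)\ge L^{-1}$, where the off-diagonal weight degrades from $A$ to $cn\ll 2A$. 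The reason is structural: the sandwich for the grid point $k$ assigned to $x$ reads $\tfrac{\epsilon}{cn}d^{(j)}\le x-z^{(j,k)}\le \tfrac{\epsilon}{cn}e_j-\epsilon e_j'$, and the chain monotonicity--convexity--convexity--monotonicity converts control in direction $d^{(j)}$ at $z^{(j,k)}$ into control only in direction $e_j-cn\,e_j'$ at $x$; no upper bound of the form $x-z^{(j,k)}\le s\,d^{(j)}$ exists, since $(x-z^{(j,k)})_i$ can be as large as $-\epsilon$ while $s\,d^{(j)}$ has off-diagonal entries $\le-\epsilon A/cn\ll-\epsilon$. Your proposed mechanism --- monotonicity of $t\mapsto F'(z^{(j,k)}+t\,d^{(j)})d^{(j)}$ along rays --- cannot bridge this either: on such a ray all coordinates $x_i$, $i\ne j$, stay equal to one another, so these rays miss almost all of $\overline O$.

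That your (a) genuinely fails (and does not merely resist this proof) can be seen with $n=2$, $\epsilon=2$, $c=3$, so $cn=6$, $A=\tfrac{33}{2}$, $\overline O=[-\tfrac12,3]^2$, and the $j=1$ grid points lying on the line $x_2=1+2\epsilon=5$ with first coordinates in $[-\tfrac23,\tfrac83]$. Take
\begin{equation*}
F(x_1,x_2):=\begin{pmatrix}7x_1^2+x_1x_2+\tfrac{1}{28}x_2^2+\tfrac{181}{2}x_1+x_2\\[1ex] x_2\end{pmatrix},
\end{equation*}
which is smooth, pointwise convex (the Hessian of $F_1$ is $\bigl(\begin{smallmatrix}14&1\\1&1/14\end{smallmatrix}\bigr)$, positive semi-definite) and monotonic on a neighbourhood of the required box $[-\tfrac23,5]^2$ (both partial derivatives $P:=\partial_1F_1=14x_1+x_2+\tfrac{181}{2}$ and $Q:=\partial_2F_1=x_1+\tfrac1{14}x_2+1$ are positive there). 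Then $P-2AQ=-19x_1-\tfrac{19}{14}x_2+\tfrac{115}{2}$ is positive at every grid point $(z_1,5)$, $z_1\le\tfrac83$ (its minimum over the grid is $\approx 0.05>0$), so $e_1^TF'(z^{(1,k)})d^{(1)}>0$ for all $k$, while $e_2^TF'(z^{(2,k)})d^{(2)}=\tfrac12>0$ handles $j=2$; hence \eqref{eq:thm_Newton_tight_assumption} holds in full. Yet at $x^*=(3,3)\in\overline O$ one computes $P-2AQ\approx-3.6<0$ and $e_2^TF'(x^*)d^{(1)}=-A<0$, so both entries of $F'(x^*)d^{(1)}$ are strictly negative and your bound $\max_le_l^TF'(x^*)d^{(1)}\ge L^{-1}$ fails (the paper's bound $P-cnQ=P-6Q>0$ still holds, and the theorem's conclusions remain true for this $F$, so this refutes only your lemma). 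The repair is exactly the paper's formulation: prove (a) for the direction $e_j-cn\,e_j'$ via the two-sided sandwich above; everything downstream in your plan then goes through verbatim --- injectivity via Lemma~\ref{lemma:aux_lemma} since $e_j-cn\,e_j'\le e_j-e_j'$, the cone bound in (c) with threshold $\tfrac1{cn}$ instead of $\tfrac1{2A}$, part (d), and the Newton induction unchanged.
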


To prove Theorem~\ref{thm:tight_Newton} we first prove a variant of lemma \ref{lemma:simple_Newton}
with tighter domain assumptions.

\begin{lemma}\label{lemma:tight_Newton}
Under the assumptions and with the notations of Theorem~\ref{thm:tight_Newton}, the following holds:
\begin{enumerate}[(a)]
\item For all $x\in \overline O$, and $j\in \{1,\ldots,n\}$,
\[
\max_{l=1,\ldots,n} e_l^T F'(x)(e_j-c n e_j')\geq L^{-1}.
\]
\item $F$ is injective on $\overline O$.  For all $x,y\in \overline O$, $F'(x)$ is invertible, 
\[
\norm{x-y}_\infty\leq L \norm{F(x)-F(y)}_\infty, \quad \text{ and } \quad \norm{F'(x)^{-1}}_\infty \leq L.
\]
\item For all $x\in \overline O$, and $0\neq y\in \R^n$ 
\[
F'(x)y\geq 0 \ \text{ implies } \ \max_{j=1,\ldots,n} y_j=\norm{y}_\infty \ \text{ and } \
\min_{j=1,\ldots,n} y_j> -\frac{1}{c n}\norm{y}_\infty.
\]
\item $M$ is invertible, $M^{-1}=\frac{1}{1+(c-1)n} \left( I_n- \frac{1}{1+cn} \1 \1^T \right)$. For all $x\in \overline O$, and $y\in \R^n$
\[
F'(x)y\geq 0 \quad \text{ implies } \quad My\geq 0,
\]
and thus $M F'(x)^{-1}\geq 0$.
\end{enumerate}
\end{lemma}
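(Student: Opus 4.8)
The plan is to follow the blueprint of Lemma~\ref{lemma:simple_Newton} essentially verbatim, since the four assertions stand in the same logical relation: once the strong row-dominance estimate (a) is available, part (b) is a direct consequence of Lemma~\ref{lemma:aux_lemma}, part (c) is its contrapositive reformulation, and part (d) is a short linear-algebra computation built on (c). The only place where the tighter domain $\overline O=(-\frac{1+\epsilon}{cn},1+\epsilon)^n$ actually forces new work is (a): in Lemma~\ref{lemma:simple_Newton} a single base point $z^{(j)}$ lying far outside the cube had enough ``reach'' to sandwich $x-z^{(j)}$ for every $x$, whereas here the probe points $z^{(j,k)}$ sit only at height $1+2\epsilon$ just above $\overline O$, so a whole grid of them (indexed by $k$, spaced $\frac{\epsilon}{2cn}$ apart in the $e_j$-coordinate) is needed to cover the range of the $j$-th coordinate of $x$.

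For (a), fix $x\in\overline O$ and $j$, and abbreviate $b_i:=(1+2\epsilon)-x_i$ for $i\neq j$, so that $x\in\overline O$ gives $b_i\in[\epsilon,B]$ with $B:=1+2\epsilon+\frac{1+\epsilon}{cn}$; the $e_j'$-weight $\beta:=\frac{1+\epsilon+cn+2\epsilon cn}{\epsilon}$ of $d^{(j)}$ has been tuned so that $\epsilon\beta=Bcn$. I would first choose the grid index $k$ so that $a_k:=x_j-z^{(j,k)}_j$ lands in the window $[\frac{\max_i b_i}{2\beta},\frac{\min_i b_i}{cn}]$; this window always contains $[\frac{\epsilon}{2cn},\frac{\epsilon}{cn}]$, hence has length at least the grid step $\frac{\epsilon}{2cn}$, and the count $K$ together with the placement of the probe points is exactly what guarantees some $a_k$ falls inside it. For this $k$ one verifies the two-sided bound
\[
2a_k\,d^{(j)}\ \leq\ x-z^{(j,k)}\ \leq\ a_k\left(e_j-cn\,e_j'\right),
\]
where the left inequality in the $e_j'$-components is the condition $b_i\leq 2a_k\beta$ (guaranteed by $a_k\geq\frac{\max_i b_i}{2\beta}$), the right one is $a_k cn\leq b_i$ (guaranteed by $a_k\leq\frac{\min_i b_i}{cn}$), and the $e_j$-components hold with equality. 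Chaining monotonicity \eqref{eq:assumption_monotonic} on the right inequality, the convexity estimate $F'(x)(x-z^{(j,k)})\geq F'(z^{(j,k)})(x-z^{(j,k)})$ (which follows from \eqref{eq:assumption_convex} applied at both $x$ and $z^{(j,k)}$), and monotonicity on the left inequality yields
\[
a_k\,F'(x)\left(e_j-cn\,e_j'\right)\ \geq\ F'(x)\left(x-z^{(j,k)}\right)\ \geq\ F'(z^{(j,k)})\left(x-z^{(j,k)}\right)\ \geq\ 2a_k\,F'(z^{(j,k)})d^{(j)}.
\]
Dividing by $a_k>0$ and taking the maximal component gives $\max_l e_l^TF'(x)(e_j-cn\,e_j')\geq 2L^{-1}\geq L^{-1}$, which is (a).

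Parts (b)--(d) then run as in Lemma~\ref{lemma:simple_Newton}. For (b) I note that (a) a fortiori gives $\max_l e_l^TF'(x)(e_j-e_j')\geq L^{-1}$ (since $e_j-e_j'\geq e_j-cn\,e_j'$ and $F'(x)\geq0$), which is hypothesis \eqref{eq:max_lemma_aux}, so Lemma~\ref{lemma:aux_lemma} applies. For (c) I argue by contraposition: if $y_j\leq-\frac1{cn}\norm{y}_\infty$ for some $j$ then $-\frac{cn}{\norm{y}_\infty}y\geq e_j-cn\,e_j'$, so by (a) and monotonicity $-\frac{cn}{\norm{y}_\infty}F'(x)y\not\leq0$, contradicting $F'(x)y\geq0$; this forces $\min_j y_j>-\frac1{cn}\norm{y}_\infty$ and $\max_j y_j=\norm{y}_\infty$. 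For (d) I verify $M^{-1}$ by multiplying out $M=\1\1^T+(1+(c-1)n)I_n$, and then, for $y$ with $F'(x)y\geq0$, use the two estimates from (c) to bound each $(My)_l=\sum_j y_j+(1+(c-1)n)y_l$ from below; the constants collapse so that the coefficient of $\norm{y}_\infty$ is exactly $1-\frac{(n-1)+(1+(c-1)n)}{cn}=0$, giving $My\geq0$, and applying this to $y=F'(x)^{-1}v$ yields $MF'(x)^{-1}\geq0$.

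The main obstacle is entirely in part (a): making the grid argument rigorous, i.e.\ proving that the window for $a_k$ is nonempty and that the spacing $\frac{\epsilon}{2cn}$ and count $K$ really do place a probe point inside it for every $x\in\overline O$, and checking that the tuned constants (the hypothesis $c\geq2+\frac2\epsilon$, the weight $\beta$ with $\epsilon\beta=Bcn$, and the $\frac12$ in the $e_j$-slot of $d^{(j)}$) make both sides of the sandwich hold simultaneously for the chosen $k$. Once this bookkeeping is in place, the monotonicity--convexity--monotonicity chain and the deductions (b)--(d) are routine.
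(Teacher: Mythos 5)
Your proposal is correct and takes essentially the same route as the paper's proof: the same grid selection of $k$ so that $x_j-z_j^{(j,k)}$ falls in a window of one grid-step, the same monotonicity--convexity--monotonicity sandwich for (a) (your $a_k$-scaled two-sided bound with the factor $2$ is just a renormalization of the paper's fixed bounds $\frac{\epsilon}{cn}d^{(j)}\leq x-z^{(j,k)}\leq \frac{\epsilon}{cn}(e_j-cn\,e_j')$, using $\epsilon\beta=Bcn$ exactly as the paper does), and identical deductions of (b)--(d) via Lemma~\ref{lemma:aux_lemma}, contraposition, and the computation $(n-1)+(1+(c-1)n)=cn$. The bookkeeping you flag as the remaining obstacle does close: $a_1\geq\frac{\epsilon}{2cn}$, each step decreases $a_k$ by exactly the window length $\frac{\epsilon}{2cn}$, and the definition of $K$ forces $a_K\leq\frac{\epsilon}{cn}$, so some $a_k$ lands in $[\frac{\epsilon}{2cn},\frac{\epsilon}{cn}]$, which is precisely the paper's choice of $k$.
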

\begin{proof} 
We use the same arguments as in lemma \ref{lemma:simple_Newton}.
To prove (a) let $j\in \{1,\ldots,n\}$ and $x\in \overline O=[-\frac{1+\epsilon}{cn},1+\epsilon]^n$.
%Since 
%\begin{align*}
%-\frac{1+\epsilon}{cn}+K\frac{\epsilon}{2cn}
%&\geq -\frac{1+\epsilon}{cn}+\frac{2cn}{\epsilon}\left(1+\epsilon + \frac{1+\epsilon}{cn}\right)\frac{\epsilon}{2cn}\\
%&= -\frac{1+\epsilon}{cn}+\left(1+\epsilon + \frac{1+\epsilon}{cn}\right)\\
%&= 1+\epsilon
%\end{align*}
Then, by the definition of $K$, there exists $k\in \{1,\ldots,K\}$, so that
\[
-\frac{1+\epsilon}{cn}+(k-1)\frac{\epsilon}{2cn} \leq x_j\leq
-\frac{1+\epsilon}{cn}+k\frac{\epsilon}{2cn}.
\]
It follows from the definition of $z^{(j,k)}$ and $d^{(j)}$ in \eqref{eq:Newton_tight_def_z} and \eqref{eq:Newton_tight_def_d} that
\begin{align*}
x-z^{(j,k)} &\geq 
%\left( -\frac{1+\epsilon}{cn}+(k-1)\frac{\epsilon}{2cn}\right) e_j -\frac{1+\epsilon}{cn} e_j' 
%-  \left( -\frac{1+\epsilon}{cn}+(k-2)\frac{\epsilon}{2cn} \right) e_j - \left( 1+2\epsilon  \right)e_j' \\ 
%&\geq \frac{\epsilon}{2cn} e_j -\frac{1+\epsilon}{cn} e_j' 
%- \left( 1+2\epsilon  \right)e_j' \\ 
%&=
\frac{\epsilon}{2cn} e_j  - \left(\frac{1+\epsilon}{cn}+1+2\epsilon\right) e_j'
%\\
%&=\frac{\epsilon}{cn} \left( \frac{1}{2} e_j  - \frac{1+\epsilon+cn+2\epsilon cn}{\epsilon} e_j' \right),\\
%&
=\frac{\epsilon}{cn} d^{(j)},\\
x-z^{(j,k)} &\leq 
%\left( -\frac{1+\epsilon}{cn}+k\frac{\epsilon}{2cn}\right) e_j + (1+\epsilon) e_j' 
%-  \left( -\frac{1+\epsilon}{cn}+(k-2)\frac{\epsilon}{2cn} \right) e_j - \left( 1+2\epsilon  \right)e_j' \\ 
%&= 
\frac{\epsilon}{cn} e_j -\epsilon e_j'.
\end{align*}
We thus obtain
\begin{align*}
\lefteqn{F'(x) \left(e_j - cn e_j'\right)
=\frac{cn}{\epsilon} F'(x) \left( \frac{\epsilon}{cn} e_j - \epsilon e_j'\right)
\geq  \frac{cn}{\epsilon} F'(x) \left(x-z^{(j,k)}\right)}\\
&\geq \frac{cn}{\epsilon} \left( F(x) - F(z^{(j,k)})  \right)
\geq \frac{cn}{\epsilon} F'(z^{(j,k)}) \left(x - z^{(j,k)}\right)
\geq F'(z^{(j,k)}) d^{(j)} ,
\end{align*}
which proves (a). Since this also implies a fortiori that 
\[
\max_{l=1,\ldots,n} e_l^T F'(x)(e_j- e_j')\geq L^{-1} \quad \text{ for all $x\in \overline O$,}
\]
(b) follows from lemma~\ref{lemma:aux_lemma}.

To show (c) by contraposition, let $x\in \overline O$, $0\neq y\in \R^n$, and  assume that for some index $j\in \{1,\ldots,n\}$,
we have that $y_j\leq -\frac{1}{cn}\norm{y}_\infty$. Then $y\leq -\frac{1}{cn}\norm{y}_\infty e_j + \norm{y}_\infty e_j'$, so that
\[
- \frac{cn}{\norm{y}_\infty } y  \geq e_j -cn e_j', \quad \text{ and thus, by (a),} \quad - \frac{cn}{\norm{y}_\infty } F'(x) y\not\leq 0.
\] 
By contraposition, this shows that 
\[
F'(x)y\geq 0 \quad \text{ implies } \quad \min_{j=1,\ldots,n} y_j> -\frac{1}{cn}\norm{y}_\infty,
\]
and the latter also implies that $\norm{y}_\infty= \max_{j=1,\ldots,n} y_j$.

For the proof of (d), it is easily checked that
\begin{align*}
\frac{1}{1+(c-1)n} \left( I_n- \frac{1}{1+cn} \1 \1^T \right) \left( \1 \1^T+(1+(c-1)n)I_n \right)%\\
%=\frac{1}{1+(c-1)n} \1 \1^T +I_n -\frac{1}{(1+(c-1)n)(1+cn)} \1 \1^T \1 \1^T -\frac{1}{1+cn}\1 \1^T\\
%=I_n +\frac{1+cn-n - (1+(c-1)n)}{(1+(c-1)n)(1+cn)} \1  \1^T 
=I_n,
\end{align*}
which shows the invertibility of $M$ and the asserted formula for $M^{-1}$.
Moreover, for all $y\in \R^n$, and $l\in \{1,\ldots,n\}$, 
\begin{align*}
e_l^T M y &= \sum_{j=1}^n y_j + (1+(c-1)n)y_l\\
&\geq \max_{j=1,\ldots,n} y_j + (n-1) \min_{j=1,\ldots,n} y_j
+ (1+(c-1)n)y_l\\
&=\max_{j=1,\ldots,n} y_j + cn \min_{j=1,\ldots,n} y_j.
\end{align*}
Hence, using (c), for all $x\in \overline O$, $F'(x)y\geq 0$ implies $My\geq 0$.
As this also shows that
\[
F'(x) F'(x)^{-1} y=y\geq 0 \quad \text{ implies } \quad M F'(x)^{-1} y\geq 0,
\]
we have $M F'(x)^{-1}\geq 0$.
\hfill $\Box$
\end{proof}

\begin{proof}[of Theorem \ref{thm:tight_Newton}]
We proceed as in the proof of Theorem~\ref{thm:simple_Newton}.
Assertion (a) has already been proven in lemma~\ref{lemma:tight_Newton}(b).

To prove (b), let $x^{(i)}\in O$ with $F(x^{(i)})\geq 0$ and $0\leq M x^{(i)}\leq M\1$. Then, by (a), $F'(x^{(i)})\in \R^{n\times n}$ is invertible, so that 
\[
x^{(i+t)}:=x^{(i)}- t F'(x^{(i)})^{-1}F(x^{(i)})\in \R^n, \quad t\in [0,1]
\]
is well defined.

We will prove that $x^{(i+1)}\in O=(-\frac{1+\epsilon}{cn},1+\epsilon)^n$. We argue by contradiction, and assume that this is not the case. Then, by continuity, there exists $t\in (0,1]$ with $x^{(i+t)}\in \overline O\setminus O$, so that, by lemma \ref{lemma:aux_Newton_monotone},
\begin{equation}\label{eq:tight_Newton_Mxt}
F(x^{(i+t)})\geq 0 \quad \text{ and } \quad 0\leq M x^{(i+t)}\leq M x^{(i)}\leq M\1.
\end{equation}
Convexity \eqref{eq:assumption_convex} then yields that
\begin{align*}
F'(x^{(i+t)})(x^{(i+t)}-0)\geq
F(x^{(i+t)})-F(0)\geq 0,
\end{align*}
and using lemma~\ref{lemma:tight_Newton}(c) this would imply
\begin{align}\label{eq:tight_min_xkplust}
\min_{j=1,\ldots,n}\, x^{(i+t)}_j&>-\frac{1}{cn}\, \max_{j=1,\ldots,n}\, x^{(i+t)}_j \geq -\frac{1+\epsilon}{cn}.
\end{align}
For all $l\in \{1,\ldots,n\}$, we obtain from \eqref{eq:tight_Newton_Mxt} and \eqref{eq:tight_min_xkplust}
\begin{align*}
1+cn&= e_l^T M\1\geq e_l^T M x^{(i+t)} 
= (1 + (c-1)n)x^{(i+t)}_l + \sum_{j=1}^n x^{(i+t)}_j\\
&\geq (2 + (c-1)n)x^{(i+t)}_l -(n-1)\frac{1+\epsilon}{cn},
\end{align*}
and thus
\begin{align*}
x^{(i+t)}_l&\leq \frac{1+cn + (n-1)\frac{1+\epsilon}{cn}}{2 + (c-1)n}
%= 1+ \frac{n-1 + (n-1)\frac{1+\epsilon}{cn}}{2 + (c-1)n}\\ &
=  1+ \frac{(n-1)(1+\epsilon+cn)}{(2 + (c-1)n)cn}.
\end{align*}
An elementary computation shows that 
\begin{equation*}%\label{eq:tight_max_est_aux}
\frac{1+\epsilon + cn}{(2 + (c-1)n)cn}<
\frac{(1+\epsilon)cn + cn}{(2 + (c-1)n)cn}
= \frac{2+\epsilon}{2 + (c-1)n}
< \frac{2+\epsilon}{(c-1)n}
\leq  \frac{\epsilon}{n},
\end{equation*}
where we used $cn> 1$ for the first inequality, and we used the assumption  $c\geq 2+\frac{2}{\epsilon}=\frac{2+\epsilon}{\epsilon}+1$ for the last inequality. 
Hence, for all $l\in \{1,\ldots,n\}$,
\[
x^{(i+t)}_l<1+\epsilon\frac{n-1}{n}<1+\epsilon, \quad \text{ so that } \quad x^{(i+t)}\in O.
\]
This contradicts $x^{(i+t)}\in \overline O\setminus O$, and thus shows that $x^{(i+1)}\in O$.

Using lemma \ref{lemma:aux_Newton_monotone} again, this shows that for all 
\[
x^{(i)}\in O\ \text{ with } F(x^{(i)})\geq 0, \ \text{ and } \ 0\leq M x^{(i)}\leq M\1,
\]
the next Newton iterate $x^{(i+1)}$ is well-defined and also fulfills 
\[
x^{(i+1)}\in O\ \text{ with } F(x^{(i+1)})\geq 0, \ \text{ and } \ 0\leq M x^{(i+1)}\leq M x^{(i)}\leq M\1.
\]
Hence, for $x^{(0)}:=\1$, the Newton algorithm produces a well-defined 
sequence $x^{(i)}\in O$ for which $Mx^{(i)}$ is monotonically non-increasing and bounded. Hence, $(Mx^{(i)})_{i\in \N}$ and thus also $(x^{(i)})_{k\in \N}$ converge.
We define
\[
\hat x:=\lim_{i\to \infty} x^{(i)}\in \overline O.
\]
Since $F$ is continuously differentiable and $F'(\hat x)$ is invertible, it follows from the Newton iteration formula that $F(\hat x)=0$.  
Also, the monotone convergence of $(Mx^{(i)})_{i\in \N}$ shows that
\[
0\leq M\hat x \leq Mx^{(i)} \leq M\1 \quad \text{ for all } i\in \N.
\]
Moreover, since this is the standard Newton iteration, the convergence speed is superlinear and the speed is quadratic if $F'$ is Lipschitz
continuous in a neighbourhood of $\hat x$. 
\hfill $\Box$
\end{proof}

%%%%%%%%%%%%%%%%%%%%

\kommentar{

%%%%%%%%%%%%%%%%%%%%%%%%%%%%%%%%%%%%%%%%%%%%%%%%%%%%%%%%%%

\subsection{Academic examples}

%remark on Newton-Kantorovich ?

We give two simple academic examples to illustrate our results. The first example shows that convex monotonic functions do not have to be injective.

\begin{example} 
Consider
\[
F:\ \R^2\to \R^2, \quad \begin{pmatrix}x_1\\ x_2\end{pmatrix}\mapsto \begin{pmatrix}e^{x_1}+e^{x_2}\\ x_1+x_2\end{pmatrix}.
\]
Then \[
F'(x)=\begin{pmatrix} e^{x_1} & e^{x_2}\\ 1 & 1  \end{pmatrix}.
\]
Clearly $F$ is convex and monotonic since all components of $F$ are convex,
and $F'(x)$ has only positve entries, cf.\ remark \ref{remark:convex_mon}.
However, $F$ is not injective on any interval 
$[a,b]^2\subseteq \R^2$ with $a,b\in \R$, $a<b$, since $F(x_1,x_2)=F(x_2,x_1)$. 
\end{example}

The second example is on a situation where theorem \ref{thm:simple_Newton} applies. 
\begin{example} 
Given $\hat x:=(\hat x_1,\hat x_2)^T\in (0,1)^2\in \R^2$, we consider
\[
F:\ \R^2\to \R^2,\quad \begin{pmatrix}x_1\\ x_2\end{pmatrix}\mapsto \begin{pmatrix}e^{x_1+4}+x_2\\ x_1+e^{x_2+4}\end{pmatrix}
- \begin{pmatrix} e^{\hat x_1+4}+\hat x_2\\ \hat x_1+e^{\hat x_2+4}\end{pmatrix},
\]
so that, by construction, $F(\hat x)=0$.
Since the components of $F$ are convex and 
\[
F'(x)=\begin{pmatrix} e^{x_1+5} & 1\\ 1 & e^{x_2+5}  \end{pmatrix},
\]
the monotonicity and coercivity assumptions are fulfilled.

We have that
\begin{align*}
F'\begin{pmatrix} -2\\ n(n+3) \end{pmatrix} \begin{pmatrix} 1\\ -(n^2+3n+1) \end{pmatrix}
%&=F'\begin{pmatrix} -2\\ 10 \end{pmatrix}\begin{pmatrix} 1\\ -8 \end{pmatrix}
%= \begin{pmatrix} e^{-2+5} & 1\\ 1 & e^{10+5}  \end{pmatrix} \begin{pmatrix} 1\\ -8 %\end{pmatrix}\\
&= \begin{pmatrix} e^{3}-8 \\ 1-8e^{15} \end{pmatrix},\\
F'\begin{pmatrix} n(n+3)\\ -2 \end{pmatrix} \begin{pmatrix} -(n^2+3n+1)\\ 1 \end{pmatrix}
%&=
%F'\begin{pmatrix} 10\\ -2 \end{pmatrix} \begin{pmatrix} -8\\ 1 \end{pmatrix}
%= \begin{pmatrix} e^{10+5} & 1\\ 1 & e^{-2+5}  \end{pmatrix} \begin{pmatrix} 1\\ -8 %\end{pmatrix}\\
&= \begin{pmatrix} 1-8e^{15} \\ e^{3}-8  \end{pmatrix},  
\end{align*}
so that the assumptions of theorem \ref{thm:simple_Newton} are fulfilled
with (using $e^3\geq 20$)
\begin{align*}
L=(n+2) (e^{3}-8)^{-1}\leq \frac{1}{3}.
\end{align*}

Theorem \ref{thm:simple_Newton} thus shows that $F$ is injective on $[-1,2]^2$ and that
for all $x,y\in [-1,2]^2$
\[
\norm{x-y}_\infty\leq \frac{1}{3} \norm{F(x)-F(y)}_\infty
\quad 
\text{ and } \quad \norm{F^{-1}(x)}_\infty\leq \frac{1}{3}
\]

(and probably also that $L \norm{F(x)-F(y)}_\infty\geq \norm{x-y}_\infty$
holds with $L=e^2-7>0.3$.)

Moreover, the theorem guarantees that the Newton method will converge when applied with starting value
$(1,1)$. 

%%%%%%%%%%%%%%%%%%%%%%%%%%%%%%%%%%%%
%-- Add numerical result here --
%%%%%%%%%%%%%%%%%%%%%%%%%%%%%%%%%%%%

\kommentar{

Indeed, for the example $\hat x=(0.2,0.9)^T$ we obtain from the Newton iteration starting with $x^{(0)}=(1,1)^T$
\begin{alignat*}{4}
x^{(1)}&=\begin{pmatrix}0.4493\\ 0.9032 \end{pmatrix}, & \
x^{(2)}&=\begin{pmatrix}0.2287\\ 0.8998 \end{pmatrix}, & \
x^{(3)}&=\begin{pmatrix}0.2004\\ 0.9000 \end{pmatrix}, &\
x^{(4)}&=\begin{pmatrix}0.2000\\ 0.9000 \end{pmatrix}.\\
%\end{alignat*}
\intertext{Note also that the convergence is not pointwise monotonic, but $Mx^{(k)}$ converges pointwise monotonically. Indeed, in this example, we have $Mx^{(0)}=(3,3)^T$, $M\hat x=(1.3, 2)^T$ and}
%\begin{align*}
Mx^{(1)}&=\begin{pmatrix}1.8018\\ 2.2556 \end{pmatrix}, & \
Mx^{(2)}&=\begin{pmatrix}1.3571\\ 2.0282 \end{pmatrix}, & \
Mx^{(3)}&=\begin{pmatrix}1.3008\\ 2.0004 \end{pmatrix}, & \
Mx^{(4)}&=\begin{pmatrix}1.3000\\ 2.0000 \end{pmatrix}.
\end{alignat*}

Let us also remark that $\det(F'(x))=e^{x_1+x_2+8}-1$, and for $x\in \R^2$ with $\det(F(x))\neq 0$
\[
F'(x)^{-1}=\frac{1}{e^{x_1+x_2+8}-1}\begin{pmatrix} e^{x_2+4} & -1\\ -1 & e^{x_1+4}  \end{pmatrix},
\]
which shows that $F'(x)$ is not invertible on all of $\R^2$, and that $F'(x)$ is nowhere monotone in the sense of Collatz.
}
\end{example}

% Maybe check whether convergence also follows from Kantorovich condition
}

%%%%%%%%%%%%%%%%%%%%%%%%%%%%%%%%%%%%%%%%%%%%%%%%%%%%%%%%%%%%%%%%%%%%%%%%%%%%%%%%%%%
\section{Application to an inverse Robin transmission problem}\label{sect:inverse_bvp}
%Determining a Robin transmission coefficient from boundary data}
%%%%%%%%%%%%%%%%%%%%%%%%%%%%%%%%%%%%%%%%%%%%%%%%%%%%%%%%%%%%%%%%%%%%%%%%%%%%%%%%%%%%

We will now show how to use our result to obtain 
uniqueness, stability and global convergence results for an inverse coefficient problem with finitely many measurements.
More precisely, we show how to choose finitely many measurements so that they uniquely determine the unknown coefficient function with a given resolution,
Lipschitz stability holds, and Newton's method globally converges.

\subsection{The setting}
We consider the inverse Robin transmission problem for the Laplace equation from \cite{harrach2019global}, that is motivated by corrosion detection. 
Note that similar problems have also been studied for the Helmholtz equation under the name conductive boundary condition or transition boundary condition
\cite{lyalinov1998transition,angell1990resistive}. We first introduce the idealized infinite-di\-men\-sion\-al forward and inverse problem following \cite{harrach2019global} and then study the case of finitely many measurements.

\subsubsection{The infinite-dimensional forward and inverse problem}

Let $\Omega \subset \R^d$ ($d\geq 2$), be a bounded domain with Lipschitz boundary $\partial \Omega$  and let $D$ be an open subset of $\Omega$, with $\overline D\subset \Omega$, Lipschitz boundary $\Gamma:=\partial D$ and connected complement $\Omega\setminus\overline{D}$, cf. figure \ref{fig:Setting} in the numerical section for a sketch of the setting.

We assume that $\Omega$ describes an electrically conductive imaging domain, with a-priori known conductivity that we set to $1$ 
for the ease of presentation. (Note that all of the following results remain valid if the conductivity in $D$ and in $\Omega\setminus\overline{D}$
are a-priori known spatially dependent functions as long as they are sufficiently regular to allow unique continuation arguments). We assume that corrosion effects on the interface $\Gamma$ can be modelled with an unknown Robin transmission parameter $\gamma\in L^\infty_+(\Gamma)$, where $L^\infty_+$ denotes the subset of $L^\infty$-functions with positive essential infima. 

Applying an electrical current flux $g\in L^2(\partial\Omega)$ on the boundary $\partial \Omega$ then yields an electric potential
$u\in H^1(\Omega)$ solving the following Robin transmission problem with Neumann boundary values
\begin{alignat}{2}
\label{eq:Robin1} \Delta u &=0 \quad && \text{ in } \Omega\setminus \Gamma,\\
\label{eq:Robin2} \partial_{\nu} u|_{\partial \Omega}&= g\quad && \text{ on  }\partial \Omega,\\
\label{eq:Robin3} \llbracket u \rrbracket_\Gamma&=  0 \quad && \text{ on }  \Gamma,\\
\label{eq:Robin4} \llbracket \partial_{\nu} u\rrbracket_\Gamma &= \gamma u \quad &&\text{ on }  \Gamma,
\end{alignat}
where  $\nu$ is the  unit normal vector to the  interface  $\Gamma$ or $\partial\Omega$ pointing outward of $D$, resp., $\Omega$,
and  
\[
\llbracket u \rrbracket:= u^+|_{\Gamma}-u^-|_{\Gamma}, 
\quad \text{ and } \quad 
\llbracket \partial_{\nu}u \rrbracket:=\partial_{\nu}u^+|_\Gamma - \partial_{\nu}u^-|_\Gamma,
\]
denote the jump of the Dirichlet, resp., Neumann trace values on $\Gamma$, with the superscript ''$+$'' denoting that
the trace is taken from $\Omega\setminus D$ and ''$-$'' denoting the trace taken from $D$.
In the following, we often denote the solution of \eqref{eq:Robin1}--\eqref{eq:Robin4} by $u_\gamma^{(g)}$ to point out its dependence on the Robin transmission coefficient $\gamma$ and the Neumann boundary data $g$. 

It is easily seen that this problem is equivalent to the variational formulation of finding 
$u_\gamma^{(g)}\in H^1(\Omega)$ such that
\begin{equation}
\label{Robin_variational}
\int_\Omega \nabla u_\gamma^{(g)}\cdot\nabla w\dx+\int_\Gamma \gamma u_\gamma^{(g)} w \dx[s]=\int_{\partial\Omega}gw\dx[s] \quad  \text{ for all } w\in  H^1(\Omega),
\end{equation}
and that \eqref{Robin_variational} is uniquely solvable by the Lax-Milgram-Theorem. Hence, we can define the
Neumann-to-Dirichlet map
\[
\Lambda(\gamma):\ L^2(\partial \Omega)\to  L^2(\partial \Omega), \ g\mapsto u_\gamma^{(g)}|_{\partial \Omega},
\ \text{ where $u_\gamma^{(g)}\in H^1(\Omega)$ solves \eqref{Robin_variational}}.
\]
It is easy to show that $\Lambda(\gamma)$ is a compact self-adjoint linear operator. 

One may regard $\Lambda(\gamma)$ as an idealized model (the so-called continuum model) 
of all electric current/voltage measurements that can be carried out on the outer boundary $\partial \Omega$.
Hence the infinite-dimensional inverse coefficient problem of determining a Robin transmission coefficient from boundary measurements can
be formulated as the problem to
\[
\text{ reconstruct } \quad \gamma\in L^\infty_+(\Gamma) \quad \text{ from } \quad \Lambda(\gamma)\in \LL(L^2(\partial \Omega)).
\]

We summarize some more properties of the infinite-dimensional forward mapping $\Lambda$ in the following lemma.

\begin{lemma}\label{lemma:Lambda_properties}
\begin{enumerate}[(a)]
\item The non-linear mapping
\[
\Lambda:\ L^\infty_+(\Gamma)\to \LL(L^2(\partial \Omega)), \quad \gamma\mapsto \Lambda(\gamma)
\]
is Fr\'echet differentiable. Its derivative 
\[
\Lambda':\ L^\infty_+(\Gamma)\to \LL(L^\infty(\Gamma),\LL(L^2(\partial \Omega)))
\]
is given by the bilinear form 
\begin{equation}\label{eq:Lambda_derivative}
\int_{\partial \Omega} g  \left(\Lambda'(\gamma)\delta\right) h\dx[s]  = -\int_\Gamma \delta u_\gamma^{(g)}u_\gamma^{(h)} \dx[s],
\end{equation}
for all $\gamma\in L^\infty_+(\Gamma)$, $\delta \in L^\infty(\Gamma)$, and $g,h\in L^2(\partial \Omega)$,
where $u_\gamma^{(g)}\in H^1(\Omega)$ solves \eqref{Robin_variational}.
$\Lambda'$ is locally Lipschitz continuous and $\Lambda'(\gamma)\delta\in \LL(L^2(\partial \Omega))$ is compact and self-adjoint.
\item For all $g\in L^2(\partial \Omega)$ and all $\gamma_1,\gamma_2\in L^\infty_+(\Omega)$,
\begin{equation*}
\int_{\partial \Omega} g \left( \Lambda(\gamma_2)-\Lambda(\gamma_1) \right) g \dx[s] \geq
\int_{\partial \Omega} g \left( \Lambda'(\gamma_1) (\gamma_2-\gamma_1)\right) g \dx[s].
\end{equation*}
\item For all $\gamma\in L^\infty_+(\Omega)$, $\delta\in L^\infty(\Omega)$, and $g\in L^2(\partial \Omega)$,
\begin{equation*}
\delta(x)\geq 0 \text{ for $x\in \Omega$ a.e.} \quad \text{ implies } \quad \int_{\partial \Omega} g \left(  \Lambda'(\gamma)\delta \right) g \dx[s]\leq 0.
\end{equation*}
\end{enumerate}
\end{lemma}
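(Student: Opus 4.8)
The plan is to reduce everything to the symmetric, coercive bilinear form $B_\gamma(v,w):=\int_\Omega \nabla v\cdot\nabla w\dx+\int_\Gamma \gamma v w\dx[s]$, in terms of which \eqref{Robin_variational} reads $B_\gamma(u_\gamma^{(g)},w)=\int_{\partial\Omega}gw\dx[s]$ for all $w\in H^1(\Omega)$. For $\gamma$ ranging over a small $L^\infty$-neighbourhood of a fixed coefficient in $L^\infty_+(\Gamma)$, this form is uniformly bounded and coercive on $H^1(\Omega)$ (using a trace/Poincar\'e-type norm equivalence), so Lax--Milgram gives uniform solvability and a-priori bounds; this uniformity is what drives all three parts.

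For part (a), I would first guess the derivative directionally: for $\delta\in L^\infty(\Gamma)$ let $w_\gamma^{(g)}[\delta]\in H^1(\Omega)$ be the unique solution of $B_\gamma(w_\gamma^{(g)}[\delta],\varphi)=-\int_\Gamma \delta\,u_\gamma^{(g)}\varphi\dx[s]$ for all $\varphi$, and set $(\Lambda'(\gamma)\delta)g:=w_\gamma^{(g)}[\delta]|_{\partial\Omega}$. Subtracting the variational formulations for $\gamma+\delta$ and $\gamma$ shows the remainder $r:=u_{\gamma+\delta}^{(g)}-u_\gamma^{(g)}-w_\gamma^{(g)}[\delta]$ solves $B_\gamma(r,\varphi)=-\int_\Gamma \delta\,(u_{\gamma+\delta}^{(g)}-u_\gamma^{(g)})\varphi\dx[s]$. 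Testing the difference equation for $u_{\gamma+\delta}^{(g)}-u_\gamma^{(g)}$ against itself gives $\Norm{u_{\gamma+\delta}^{(g)}-u_\gamma^{(g)}}_{H^1}=O(\Norm{\delta}_{L^\infty})$, and feeding this into the remainder equation together with coercivity yields $\Norm{r}_{H^1}=O(\Norm{\delta}_{L^\infty}^2)$; composing with the bounded trace then gives Fr\'echet differentiability with derivative $\Lambda'(\gamma)\delta$. The bilinear form \eqref{eq:Lambda_derivative} drops out by testing the defining equation for $w_\gamma^{(g)}[\delta]$ against $\varphi=u_\gamma^{(h)}$ and using symmetry of $B_\gamma$, so that $\int_{\partial\Omega}h\,w_\gamma^{(g)}[\delta]\dx[s]=B_\gamma(u_\gamma^{(h)},w_\gamma^{(g)}[\delta])=-\int_\Gamma \delta\,u_\gamma^{(g)}u_\gamma^{(h)}\dx[s]$. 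Self-adjointness of $\Lambda'(\gamma)\delta$ is then immediate from the symmetry of this expression in $g$ and $h$, compactness follows because $\Lambda'(\gamma)\delta$ factors through the compact trace map $H^1(\Omega)\to L^2(\partial\Omega)$, and local Lipschitz continuity of $\Lambda'$ follows from \eqref{eq:Lambda_derivative} together with the (locally Lipschitz) dependence of $u_\gamma^{(g)}$ on $\gamma$.

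For part (b), the key is the concave variational characterization $\int_{\partial\Omega}g\Lambda(\gamma)g\dx[s]=B_\gamma(u_\gamma^{(g)},u_\gamma^{(g)})=\max_{w\in H^1(\Omega)}\left(2\int_{\partial\Omega}gw\dx[s]-B_\gamma(w,w)\right)$, with the maximum attained at $w=u_\gamma^{(g)}$. Writing $u_i:=u_{\gamma_i}^{(g)}$ and inserting the generally suboptimal competitor $w=u_1$ into this maximization for $\gamma_2$ gives $\int_{\partial\Omega}g\Lambda(\gamma_2)g\dx[s]\geq 2\int_{\partial\Omega}gu_1\dx[s]-B_{\gamma_2}(u_1,u_1)=2B_{\gamma_1}(u_1,u_1)-B_{\gamma_2}(u_1,u_1)$. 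Subtracting $\int_{\partial\Omega}g\Lambda(\gamma_1)g\dx[s]=B_{\gamma_1}(u_1,u_1)$ leaves precisely $B_{\gamma_1}(u_1,u_1)-B_{\gamma_2}(u_1,u_1)=-\int_\Gamma(\gamma_2-\gamma_1)u_1^2\dx[s]$, since the Dirichlet integrals cancel, and by \eqref{eq:Lambda_derivative} this last quantity equals $\int_{\partial\Omega}g\,\Lambda'(\gamma_1)(\gamma_2-\gamma_1)g\dx[s]$, which is the claimed inequality. Part (c) is then immediate from \eqref{eq:Lambda_derivative} with $h=g$, giving $\int_{\partial\Omega}g\,\Lambda'(\gamma)\delta\,g\dx[s]=-\int_\Gamma \delta\,(u_\gamma^{(g)})^2\dx[s]\leq 0$ whenever $\delta\geq 0$ a.e.

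I expect the main obstacle to be the rigorous differentiability in part (a): making the remainder estimate fully quantitative requires the coercivity and boundedness constants of $B_\gamma$ to be controlled uniformly over the neighbourhood of $\gamma$, and the a-priori bound $\Norm{u_{\gamma+\delta}^{(g)}-u_\gamma^{(g)}}_{H^1}=O(\Norm{\delta}_{L^\infty})$ must be established carefully, using the trace inequality to absorb the boundary integral $\int_\Gamma \delta\,u\,\varphi\dx[s]$ into the $H^1$-norm. Parts (b) and (c) are comparatively short once (a) and the variational characterization are in hand.
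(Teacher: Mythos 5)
Your proposal is correct, but it follows a genuinely different route from the paper's own proof. For part (a), the paper sets up no linearized variational problem and no remainder equation at all: it invokes the two-sided monotonicity estimate of \cite[Lemma 4.1]{harrach2019global}, which sandwiches $\int_{\partial \Omega} g \left(\Lambda(\gamma)-\Lambda(\gamma+\delta)\right) g \dx[s]$ between $\int_\Gamma \delta |u_\gamma^{(g)}|^2 \dx[s]$ and $\int_\Gamma \bigl( \gamma-\frac{\gamma^2}{\gamma+\delta} \bigr) |u_\gamma^{(g)}|^2 \dx[s]$; since the gap between these two bounds is $\int_\Gamma \frac{\delta^2}{\gamma+\delta} |u_\gamma^{(g)}|^2 \dx[s]=O(\norm{\delta}^2)$ uniformly over $\norm{g}_{L^2(\partial\Omega)}\leq 1$, and since the operators involved are self-adjoint (so that the supremum of the absolute quadratic form equals the operator norm), Fr\'echet differentiability with derivative \eqref{eq:Lambda_derivative} drops out in three lines. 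Part (b) is then not proven in the paper at all but simply cited from the same external lemma, and (c) is read off from \eqref{eq:Lambda_derivative} exactly as you do. Your argument replaces both imported ingredients by self-contained PDE reasoning: the auxiliary solution $w_\gamma^{(g)}[\delta]$ with coercivity and a-priori bounds for (a), and the concave variational characterization of the quadratic form with the suboptimal competitor $u_1$ for (b) --- the latter being, in essence, a re-proof of the cited monotonicity lemma, in the spirit of the Ikehata--Kang--Seo--Sheen relation the paper alludes to in its introduction. What the paper's route buys is brevity and economy (one external estimate serves both differentiability and convexity); what your route buys is self-containedness, direct operator-norm control at the level of solutions (no detour through the self-adjointness/quadratic-form argument), and explicitness about the uniform coercivity and trace-norm equivalence that the paper leaves implicit in its appeal to Lax--Milgram. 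Both proofs are sound.
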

\begin{proof}
Obviously, for all $\gamma\in L^\infty_+(\Gamma)$ and $\delta \in L^\infty(\Gamma)$, \eqref{eq:Lambda_derivative} defines a compact self-adjoint linear operator $\Lambda'(\gamma)\delta\in  \LL(L^2(\partial \Omega))$. Moreover, it follows from the monotonicity estimate in \cite[Lemma 4.1]{harrach2019global} that for all $\delta \in L^\infty(\Gamma)$ (that are sufficiently small so that
$\gamma+\delta\in L^\infty_+(\Gamma)$)
\[
\int_\Gamma \delta |u_\gamma^{(g)}|^2 \dx[s]
\geq  \int_{\partial \Omega} g \left(\Lambda(\gamma)-\Lambda(\gamma+\delta)\right)\dx[s]
\geq \int_\Gamma \left( \gamma-\frac{\gamma^2}{\gamma+\delta}  \right) |u_\gamma^{(g)}|^2 \dx[s]
\]
and thus
\begin{align*}
\lefteqn{\norm{\Lambda(\gamma+\delta)-\Lambda(\gamma)-\Lambda'(\gamma)\delta}_{\LL(L^2(\partial \Omega))}}\\
&=\sup_{g\in L^2(\partial \Omega)} \left| \int_{\partial \Omega} g \left(\Lambda(\gamma+\delta)-\Lambda(\gamma)-\Lambda'(\gamma)\delta \right)\dx[s]\right|
%&=\sup_{g\in L^2(\partial \Omega)}  \int_{\partial \Omega} g %\left(\Lambda(\gamma+\delta)-\Lambda(\gamma)-\Lambda'(\gamma)\delta \right)\dx[s]\\
%&\leq  \int_\Gamma \left( \delta - \gamma+\frac{\gamma^2}{\gamma+\delta} \right) |u_\gamma^{(g)}|^2 \dx[s]\\
 \leq  \int_\Gamma \left( \frac{\delta^2  }{\gamma+\delta} \right) |u_\gamma^{(g)}|^2 \dx[s]\\
&= O(\norm{\delta}^2).
\end{align*}
This shows that $\Lambda$ is Fr\'echet differentiable for all $\gamma\in L^\infty_+(\Gamma)$, and that $\Lambda'(\gamma)$ is its derivative. Since it is easily shown that $u_\gamma^{(g)}\in H^1(\Omega)$ depends locally Lipschitz continuously on $\gamma\in L^\infty_+(\Gamma)$, it also follows that $\Lambda'$ is locally Lipschitz continuous. This proves (a). 

(b) is shown in \cite[Lemma 4.1]{harrach2019global}, and (c) follows from \eqref{eq:Lambda_derivative}.
\end{proof}

Note that lemma \ref{lemma:Lambda_properties} shows that $\Lambda$ is a convex, anti-monotone function with respect to the pointwise partial order on $L^\infty_+(\Omega)$, and the Loewner partial order in the space of compact self-adjoint operators on $L^2(\partial \Omega)$. 
These properties are the key to formulate the finite-dimensional inverse problem as a zero finding problem for a pointwise convex and monotonic forward function.

\subsubsection{The inverse problem with finitely many measurements}

In practical applications, it is natural to assume that the unknown Robin transmission coefficient
is a piecewise constant function, i.e., $\gamma(x)=\sum_{j=1}^n \gamma_j \chi_j(x)$ where 
$\gamma_j>0$ and $\chi_j:=\chi_{\Gamma_j}$ are the characteristic functions
on pairwise disjoint subsets $\Gamma_j\subseteq \Gamma$ of a given partition $\Gamma=\bigcup_{j=1}^n \Gamma_j$.
For the ease of notation, here and in the following, we identify a piecewise constant function $\gamma(x)=\sum_{j=1}^n \gamma_j \chi_j(x)\in L^\infty(\Gamma)$ with
the vector $\gamma=(\gamma_1,\ldots,\gamma_n)\in \R^n$. 
We also simply write $a$ for the constant function $\gamma(x)=a$, and for the vector $(a,\ldots,a)\in \R^n$ (and use $b$ analogously).
Throughout this work we always assume that $n\geq 2$.

It is also natural to assume that one knows bounds $a,b\in \R$ on the unknown Robin transmission coefficient, so that $0<a\leq \gamma_j\leq b$ for all $j=1,\ldots,n$. For the semi-discretized inverse problem of reconstructing the finite-dimensional coefficient vector $\gamma\in [a,b]^n\subset \R^n$ from
the (infinite-dimensional) measurements $\Lambda(\gamma)$, the results in \cite[Thm.~2.1 and 2.2]{harrach2019global} show uniquely solvability and Lipschitz stability. Moroever, \cite[Thm.~5.2]{harrach2019global} shows how to explicitly calculate the Lipschitz constant for a given setting using arguments similar to (and inspiring) section \ref{Sec:Newton} in this work. 

We now go one step further and assume that we can only measure finitely many components of $\Lambda(\gamma)$, i.e.,
that we can measure
\begin{equation}\label{eq:Robin_measurements}
F(\gamma)=\left( \int_{\partial \Omega} g_j \Lambda(\gamma) h_j \dx[s]\right)_{j=1}^m\in \R^m
\end{equation}
for a finite number of Neumann boundary data $g_j,h_j\in L^2(\partial \Omega)$. Hence, the fully discretized 
inverse Robin transmission problem leads to the finite dimensional non-linear inverse problem to
\[
\text{ determine } \quad \gamma\in [a,b]^n\subset \R^n \quad \text{ from } \quad F(\gamma)\in \R^m.
\]

The following practically important questions are then to be answered: Given bounds $[a,b]$ and a partition of $\Gamma$ (i.e., a desired resolution), how many and which Neumann boundary functions $g_j$, $h_j$ 
should be used, so that $F(\gamma)$ uniquely determines $\gamma$? How good is the stability of the resulting inverse problem with regard to noisy measurements?
How can one construct a globally convergent numerical algorithm to practically determine $\gamma$ from $F(\gamma)$?
And how good will the solution be in the case that the true Robin transmission function $\gamma$ is not piecewise constant?

The following subsections show how these questions can be answered using the theory developed in section \ref{Sec:Newton}.
For this, let us first observe, that the symmetric choice $g_j=h_j$ leads to an inverse problem with a pointwise convex and monotonic forward function.

\begin{lemma}\label{lemma:F_rescaled}
Let $b>a>0$, $g_1,\ldots,g_n\in L^2(\partial \Omega)$, and
\[
F:\ (0,\infty)^n\to \R^n, \quad \gamma\mapsto  F(\gamma):=\left( \int_{\partial \Omega} g_j \Lambda(\gamma) g_j \dx[s]\right)_{j=1}^n.
\]
$F$ is a pointwise convex and anti-monotone, continuously differentiable function with locally Lipschitz continuous derivative $F'(\gamma)\in \R^{n\times n}$, where
\[
e_j^T F'(\gamma)e_l= \frac{\partial F_j(\gamma)}{\partial \xi_l}=-\int_{\Gamma_l}  |u_{\gamma}^{g_j}|^2  \dx[s]
\ \text{ for all } \gamma\in (0,\infty)^n,\ j,l\in \{1,\ldots,n\}.
\]

Given a vector $y=(y_j)_{j=1}^n\in \R^n$ with $F(b)\leq y\leq F(a)$, we define the rescaled function
\[
\Phi:\ U:=(-\infty,\textstyle \frac{b}{b-a})^n \subseteq \R^n \to \R^n, 
\quad \displaystyle  \Phi(\xi):=\frac{1}{b-a} \left( F(r(\xi))-y \right).
\]
with $r(\xi):=b\1-(b-a)\xi$. Then $\Phi$ is a pointwise convex and monotonic, continuously differentiable function with locally Lipschitz continuous derivative,
and $\Phi'(\xi)=-F'(r(\xi))$ for all $\xi\in U$. Also, $\Phi(0)\leq 0\leq \Phi(1)$.
\end{lemma}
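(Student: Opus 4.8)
The plan is to establish the claimed properties of $F$ first, then transfer them to $\Phi$ through the affine change of variables. For $F$, I would start from the derivative formula in Lemma~\ref{lemma:Lambda_properties}(a), namely $\int_{\partial\Omega} g_j(\Lambda'(\gamma)\delta)g_j\dx[s] = -\int_\Gamma \delta |u_\gamma^{(g_j)}|^2\dx[s]$. Taking $\delta=\chi_l$ (the characteristic function of $\Gamma_l$) immediately yields $e_j^T F'(\gamma)e_l = -\int_{\Gamma_l}|u_\gamma^{(g_j)}|^2\dx[s]$, which is the asserted entrywise formula. Since these entries are all $\leq 0$, we have $F'(\gamma)\leq 0$ pointwise, so by Remark~\ref{remark:convex_mon} (the criterion $F'(x)\geq 0$ for monotonicity) $F$ is anti-monotone. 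Convexity of $F$ follows directly from Lemma~\ref{lemma:Lambda_properties}(b), which is exactly the statement that $F(\gamma_2)-F(\gamma_1)\geq F'(\gamma_1)(\gamma_2-\gamma_1)$ componentwise after testing against $g_j$; by the characterization \eqref{eq:assumption_convex} this is convexity. Continuous differentiability and local Lipschitz continuity of $F'$ are inherited from the corresponding properties of $\Lambda'$ stated in Lemma~\ref{lemma:Lambda_properties}(a), together with the fact that $u_\gamma^{(g_j)}$ depends locally Lipschitz continuously on $\gamma$.

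Next I would handle $\Phi(\xi)=\frac{1}{b-a}(F(r(\xi))-y)$ with $r(\xi)=b\mathbf{1}-(b-a)\xi$. The map $r$ is affine with $r'(\xi)=-(b-a)I_n$, and $r$ sends $U=(-\infty,\frac{b}{b-a})^n$ into $(0,\infty)^n$ (the domain of $F$), since $\xi_j<\frac{b}{b-a}$ forces $r(\xi)_j=b-(b-a)\xi_j>0$; this is what makes $\Phi$ well defined. By the chain rule, $\Phi'(\xi)=\frac{1}{b-a}F'(r(\xi))\cdot(-(b-a))=-F'(r(\xi))$, giving the claimed derivative formula. Because $F'\leq 0$ pointwise, we get $\Phi'=-F'\geq 0$, so $\Phi$ is monotonic by \eqref{eq:assumption_monotonic}. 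Convexity is preserved because composing a convex function with an affine map and then multiplying by the positive scalar $\frac{1}{(b-a)}$ keeps the inequality \eqref{eq:assumption_convex} intact (the constant shift by $-y$ is irrelevant to convexity). Continuous differentiability and local Lipschitz continuity of $\Phi'$ transfer from those of $F'$ since $r$ is smooth affine.

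Finally, for the endpoint conditions $\Phi(0)\leq 0\leq \Phi(1)$, I would simply evaluate: $r(0)=b\mathbf{1}$, so $\Phi(0)=\frac{1}{b-a}(F(b)-y)$, and $r(1)=b\mathbf{1}-(b-a)\mathbf{1}=a\mathbf{1}$, so $\Phi(1)=\frac{1}{b-a}(F(a)-y)$. The hypothesis $F(b)\leq y\leq F(a)$ then gives $\Phi(0)\leq 0\leq \Phi(1)$ directly, using $b-a>0$. I do not anticipate a genuine obstacle here; the only point requiring slight care is bookkeeping the sign reversals introduced by the decreasing reparametrization $r$, which is precisely what converts the anti-monotone, convex $F$ into a monotone, convex $\Phi$ suitable for the global Newton theory of Section~\ref{Sec:Newton}. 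The identification of the scalar derivative entries with $-\int_{\Gamma_l}|u_\gamma^{(g_j)}|^2\dx[s]$ is the one place where I would double-check that testing the bilinear form \eqref{eq:Lambda_derivative} against $\delta=\chi_l$ reproduces exactly the partial derivative $\partial F_j/\partial\xi_l$, but this is a routine consequence of the definition of $F$ and the linearity of the derivative in $\delta$.
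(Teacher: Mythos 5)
Your proposal is correct and follows essentially the same route as the paper, whose printed proof simply states that the lemma ``follows immediately from lemma~\ref{lemma:Lambda_properties}'': the derivative formula by testing \eqref{eq:Lambda_derivative} with $\delta=\chi_l$, anti-monotonicity from the sign of the entries, convexity from Lemma~\ref{lemma:Lambda_properties}(b) via \eqref{eq:assumption_convex}, and the transfer to $\Phi$ through the affine map $r$ with the chain rule and the endpoint evaluation $r(0)=b\1$, $r(\1)=a\1$. The only (immaterial) organizational difference is that you establish all properties of $F$ first and then compose with $r:\R^n\to\R^n$, whereas the paper's underlying argument differentiates $\gamma\mapsto\Lambda(r(\xi))$ directly as a composition into $L^\infty_+(\Gamma)$; both are the same computation.
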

\begin{proof} 
This follows immediately from lemma~\ref{lemma:Lambda_properties}.
\kommentar{
Note that we have identified a vector $(\gamma_j)_{j=1}^n\in \R^n$ with the $L^\infty(\Gamma)$-function $\gamma=\sum_{j=1}^n \gamma_j \chi_j$. Hence
\[
\Phi_l(\xi)=\frac{1}{b-a} \int_{\partial \Omega} g_l  \Lambda\left(r(\xi)\right) g_l \dx[s] - y_l,
\]
where $r:\ \R^n\to L^\infty(\Gamma)$ is the linear (and thus infinitely differentiable) function
given by $r(\xi):=\sum_{j=1}^n (b-(b-a)\xi_j) \chi_j$. Since $r(U)\subset L^\infty_+(\Gamma)$, and
\[
\frac{\partial r(\xi)}{\partial \xi_j}=-(b-a)\chi_j,
\]
it follows from lemma~\ref{lemma:Lambda_properties} that $\Phi$ is continuously differentiable with
\begin{align*}
\frac{\partial \Phi_l(\xi)}{\partial \xi_j} &=\frac{1}{b-a} \int_{\partial \Omega} g_l  \Lambda'\left(r(\xi)\right) \frac{\partial r(\xi)}{\partial \xi_j} g_l \dx[s] 
%=-\frac{1}{b-a} \int_\Gamma  |u_{r(\xi)}^{g_l}|^2 \frac{\partial r(\xi)}{\partial \xi_j}  \dx[s]
= \int_{\Gamma_j}  |u_{r(\xi)}^{g_l}|^2  \dx[s]\geq 0,
\end{align*}
and that $\Phi'$ is locally Lipschitz continuous. This also shows that $\Phi$ is pointwise monotonic, cf.\ remark \ref{remark:convex_mon}.

Moreover, again using lemma~\ref{lemma:Lambda_properties}, it also follows that for all $\xi,\eta\in U$, and $l\in \{1,\ldots,n\}$
\begin{align*}
e_l^T \left(\Phi(\eta)-\Phi(\xi) \right)&=\frac{1}{b-a}\int_{\partial \Omega} g_l  \left(\Lambda\left(r(\eta)\right)  -  \Lambda\left(r(\xi)\right) \right)  g_l \dx[s]\\
&\geq \frac{1}{b-a} \int_{\partial \Omega} g_l  \Lambda'\left(r(\xi)\right) \left(r(\eta)-r(\xi)\right) g_l\dx[s]\\
%&= \sum_{j=1}^n  \int_{\partial \Omega} g_l  \Lambda'\left(r(\xi)\right) \left(
% \left( \xi_j - \eta_j \right)\chi_j
%\right) g_l\dx[s]\\
%&= \sum_{j=1}^n - \int_{\Gamma_j} (\xi_j - \eta_j ) |u_{r(\xi)}^{g_l}|^2\dx[s]\\
&= \sum_{j=1}^n (\eta_j - \xi_j ) \int_{\Gamma_j}  |u_{r(\xi)}^{g_l}|^2\dx[s]
= \sum_{j=1}^n  \frac{ \partial \Phi_l(\xi)}{\partial \xi_j} (\eta_j - \xi_j )\\
&= e_l^T \Phi'(\xi)(\eta-\xi),
\end{align*}
which shows that $\Phi$ is also pointwise convex, cf.\ remark \ref{remark:convex_mon}. 

Finally, we also have that
\[
\Phi(0)=\frac{1}{b-a}(F(b)-y) \leq 0\leq \Phi(1).
\]
}
\end{proof}

\subsection{Uniqueness, stability and global Newton convergence}

We summarize the assumptions on the setting: Let $\Omega \subset \R^d$ ($d\geq 2$), be a bounded domain with Lipschitz boundary $\partial \Omega$  and let $D$ be an open subset of $\Omega$, with $\overline D\subset \Omega$, Lipschitz boundary $\Gamma:=\partial D$ and connected complement $\Omega\setminus\overline{D}$.
We assume that the true unknown Robin transmission coefficient $\hat \gamma\in L^\infty_+(\Gamma)$ is bounded by $b\geq \hat \gamma(x)\geq a$ for $x\in \Gamma$ a.e., with a-priori known bounds $b>a>0$, and that $\hat \gamma=\sum_{j=1}^n \hat \gamma_j \chi_j$, $\chi_j:=\chi_{\Gamma_j}$, is piecewise constant on an a-priori known partition $\Gamma=\bigcup_{j=1}^n \Gamma_j$ into $n\in \N$, $n\geq 2$, pairwise disjoint measurable subsets $\Gamma_j\subset \Gamma$ with positive measure.

We will show how to construct $n$ Neumann boundary functions $g_j\in L^2(\partial \Omega)$, so that $\gamma\in [a,b]^n$ is uniquely determined by the $n$ measurements  
\[
F(\gamma)=\left( \int_{\partial \Omega} g_j \Lambda(\gamma) g_j \dx[s]\right)_{j=1}^n\in \R^n.
\] 
We also quantify the Lipschitz stability constant of this inverse problem, and show that the inverse problem can be numerically solved with
a globally convergent Newton method. Our main tool is to reformulate the finite-dimensional inverse problem as a zero finding problem 
for the pointwise monotonic and convex function $\Phi$ introduced in lemma~\ref{lemma:F_rescaled}. 
% this was based on the monotonicity relation in \cite[Lemma 4.1]{harrach2019global}
Then we use a relation to the concept of localized potentials \cite{gebauer2008localized}
to prove that we can choose the measurements $g_j$ in such a way that $\Phi$ also fulfills the additional assumptions on the directional derivative of the forward function as required by our Newton convergence theory in section \ref{Sec:Newton}.

At this point, let us stress that our theory in section \ref{Sec:Newton} allows us to find $m=n$ measurements that uniquely recover the $n$ unknown components of the Robin coefficient, but it also requires to use exactly those $m=n$ measurements to ensure global convergence of Newton's method. In practice, it would be highly desirable to 
utilize additional measurements for the sake of redundancy and error reduction. But our convergence theory 
in section \ref{Sec:Newton} does not cover the case $m>n$ yet, and an extension to, e.g., Gauss-Newton or Levenberg-Marquardt methods seems far from trivial. Moreover, for some applications, it would be desirable to also treat the interior boundary $\Gamma$ as unknown. But it is not clear whether the interplay of parametrization, differentiability and localized potentials results that is required to fulfill the assumptions of section \ref{Sec:Newton}
can also be extended to this case. Hence, in all of the following we will only consider the case where the interior boundary is known and utilize exactly $m=n$ measurements.

\subsubsection{Choosing the measurements (for specific bounds)}\label{subsect:choose_meas_specific}

To demonstrate the key idea, we will first consider the specific (and rather restrictive) case where the bounds $a,b$ fufill
\[
n(n+3)<\frac{b}{b-a},
\] 
since this case can be treated by simply combining Theorem \ref{thm:simple_Newton}
with a known localized potentials result from \cite{harrach2019global}.
The case of general bounds $b>a>0$ is more technical and requires an extended result on simultaneously localized potentials. It will be treated in subsection~\ref{subsect:choose_meas_general}.

\begin{theorem}\label{theorem:choose_meas_specific}
Given bounds $b>a>0$ that fulfill $n(n+3)<\frac{b}{b-a}$, we define the piecewise constant functions
$\kappa^{(j)}\in L^\infty_+(\Gamma)$, $j=1,\ldots,n$, by setting
\[
\kappa^{(j)}:=%r(-2 e_j + n(n+3) e_j')
%=\left\{ \begin{array}{l l}
% \left( b- (b-a)(-2) \right) \chi_j & \text{ on } \Gamma_j\\
% \left( b- (b-a) (+ n(n+3)) \right) & \text{ on } \Gamma\setminus \Gamma_j
%\end{array} \right.
\left\{ \begin{array}{l l}
  3b-2a  & \text{ on } \Gamma_j,\\
 b- n(n+3)(b-a)  & \text{ on } \Gamma\setminus \Gamma_j.
\end{array} \right.
\]

\begin{enumerate}[(a)]
\item If, for all $j=1,\ldots,n$, $g_j\in L^2(\partial \Omega)$ fulfills
\begin{equation}\label{eq:lemma_choose_meas_specific_gj}
\lambda^{(j)}:=
\int_{\Gamma_j}  |u_{\kappa^{(j)}}^{g_j}|^2     \dx[s] -
(n^2+3n+1)\int_{\Gamma\setminus \Gamma_j}   |u_{\kappa^{(j)}}^{g_j} |^2   \dx[s]>0,
\end{equation}
then the finite-dimensional non-linear inverse problem of determining 
\[
\gamma=\left(\gamma_j\right)_{j=1}^n\in [a,b]^n \quad \text{ from }\quad
F(\gamma):=\left(\int_{\partial\Omega} g_j \Lambda(\gamma) g_j\dx[s]\right)_{j=1}^n\in \R^n
\]
has a unique solution in $[a,b]^n$, and $\gamma$ depends Lipschitz continuously on $F(\gamma)$.
Moreover, the iterates of Newton's method applied to the problem of determining $\gamma$ from $F(\gamma)$, with 
 initial value $\gamma^{(0)}=a\in \R^n$, quadratically converge to the unique solution $\gamma$ (see lemma~\ref{lemma:F_prop_meas_specific} for 
 more details on the properties of $F$).
\item In any large enough finite-dimensional subspace of $L^2(\partial \Omega)$, one can find
$g_1,\ldots,g_n$ fulfilling \eqref{eq:lemma_choose_meas_specific_gj} by the following construction:

Let $(f_m)_{m\in \N}\subseteq L^2(\partial \Omega)$ be a sequence of vectors with dense linear span in $L^2(\partial \Omega)$. 
Let $j\in \{1,\ldots,n\}$, $m\in \N$, and
let $M^{(j)}_m\in \R^{m\times m}$ be the symmetric matrix with entries ($l,k=1,\ldots,m$)
\[
e_l^T M^{(j)}_m e_k:=\int_{\Gamma_j}  u_{\kappa^{(j)}}^{f_l}  u_{\kappa^{(j)}}^{f_k}    \dx[s] -
(n^2+3n+1)\int_{\Gamma\setminus \Gamma_j}   u_{\kappa^{(j)}}^{f_l}  u_{\kappa^{(j)}}^{f_k}   \dx[s].
\]

Then, for sufficiently large dimension $m\in \N$, the matrix $M^{(j)}_m\in \R^{m\times m}$ has a positive eigenvalue $\lambda^{(j)}>0$.
For a corresponding normalized eigenvector $v^{(j)}=(v^{(j)}_1,\ldots,v^{(j)}_m)\in \R^m$, \eqref{eq:lemma_choose_meas_specific_gj} is fulfilled by
\[
g_j:=\sum_{k=1}^m v^{(j)}_k f_k\in L^2(\partial \Omega). 
\]
\end{enumerate}
\end{theorem}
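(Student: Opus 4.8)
The plan is to reduce Theorem~\ref{theorem:choose_meas_specific} to the abstract Newton result, Theorem~\ref{thm:simple_Newton}, through the rescaled function $\Phi$ of lemma~\ref{lemma:F_rescaled}. Let $\hat\gamma\in[a,b]^n$ be the sought coefficient and set $y:=F(\hat\gamma)$. Since $F$ is anti-monotone (lemma~\ref{lemma:Lambda_properties}) and $a\leq\hat\gamma\leq b$, we have $F(b)\leq y\leq F(a)$, so lemma~\ref{lemma:F_rescaled} supplies a continuously differentiable, pointwise convex and monotonic function
\[
\Phi(\xi)=\frac{1}{b-a}\bigl(F(r(\xi))-y\bigr),\qquad r(\xi)=b\1-(b-a)\xi,
\]
on $U=(-\infty,\tfrac{b}{b-a})^n$, with locally Lipschitz derivative $\Phi'(\xi)=-F'(r(\xi))$ and $\Phi(0)\leq0\leq\Phi(\1)$. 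The affine substitution $\gamma=r(\xi)$ is a bijection sending $[0,1]^n$ onto $[a,b]^n$, and (using $n\geq2$) it maps $[-1,n]^n$ onto a box containing $[a,b]^n$; note $n<n(n+3)<\tfrac{b}{b-a}$ guarantees $[-1,n]^n\subset U$. I would verify the two hypotheses of Theorem~\ref{thm:simple_Newton} for $\Phi$ and then read its conclusions back through this substitution.

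The domain hypothesis $[-2,n(n+3)]^n\subset U$ is exactly the standing assumption $n(n+3)<\frac{b}{b-a}$. For the directional-derivative hypothesis \eqref{eq:thm_Newton_simple_assumption}, the decisive identity is $r(z^{(j)})=\kappa^{(j)}$: with $z^{(j)}=-2e_j+n(n+3)e_j'$ one computes that $r(z^{(j)})$ equals $3b-2a$ in the $j$-th entry and $b-n(n+3)(b-a)$ in the others, which is precisely the definition of $\kappa^{(j)}$. Combining $\Phi'=-F'(r(\cdot))$ with the derivative formula of lemma~\ref{lemma:F_rescaled} and $e_j'=\1-e_j$, a short computation gives
\[
e_k^T\Phi'(z^{(j)})d^{(j)}=\int_{\Gamma_j}|u_{\kappa^{(j)}}^{g_k}|^2\dx[s]-(n^2+3n+1)\int_{\Gamma\setminus\Gamma_j}|u_{\kappa^{(j)}}^{g_k}|^2\dx[s],
\]
whose diagonal value $k=j$ is exactly $\lambda^{(j)}$ from \eqref{eq:lemma_choose_meas_specific_gj}. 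Hence the assumption $\lambda^{(j)}>0$ yields $e_j^T\Phi'(z^{(j)})d^{(j)}>0$, i.e.\ $\Phi'(z^{(j)})d^{(j)}\not\leq0$, which is \eqref{eq:thm_Newton_simple_assumption}.

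Part~(a) now follows by applying Theorem~\ref{thm:simple_Newton} to $\Phi$ and translating via $\gamma=r(\xi)$. Injectivity of $\Phi$ on $[-1,n]^n\supset[0,1]^n$ gives injectivity of $F$ on $[a,b]^n$; the unique zero $\hat\xi$ of $\Phi$ corresponds to $\hat\gamma=r(\hat\xi)$, and since $\hat\gamma\in[a,b]^n$ solves $F(\gamma)=y$, this is the unique solution there. The scaling factors cancel, because $\xi-\eta=\tfrac{1}{b-a}(r(\eta)-r(\xi))$ and $\Phi=\tfrac{1}{b-a}(F\circ r-y)$, so the estimate $\norm{\xi-\eta}_\infty\leq L\norm{\Phi(\xi)-\Phi(\eta)}_\infty$ transfers verbatim to $\norm{\gamma-\gamma'}_\infty\leq L\norm{F(\gamma)-F(\gamma')}_\infty$ with the same constant $L$. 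Finally, Newton's method is affine invariant and $r(\1)=a$, so the $\Phi$-iteration from $\xi^{(0)}=\1$ is exactly the $F$-iteration from $\gamma^{(0)}=a$; convergence is quadratic since $F'$ is locally Lipschitz by lemma~\ref{lemma:F_rescaled}.

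For part~(b) I would invoke the localized potentials result for the Robin transmission problem from \cite{harrach2019global}: as $\Omega\setminus\overline D$ is connected and $\Gamma_j$ has positive measure, there is $g\in L^2(\partial\Omega)$ concentrating its boundary energy on $\Gamma_j$ so strongly that $\int_{\Gamma_j}|u_{\kappa^{(j)}}^{g}|^2\dx[s]>(n^2+3n+1)\int_{\Gamma\setminus\Gamma_j}|u_{\kappa^{(j)}}^{g}|^2\dx[s]$. The map $g\mapsto u_{\kappa^{(j)}}^{g}$ is linear and continuous from $L^2(\partial\Omega)$ into $H^1(\Omega)$ and the trace onto $\Gamma$ is continuous, so this strict inequality survives small $L^2$-perturbations; by density of $\operatorname{span}(f_m)_{m\in\N}$ there is, for $m$ large, some $\tilde g=\sum_{k=1}^m c_k f_k$ still satisfying it. By linearity of $u$ in the data, $c^T M^{(j)}_m c$ equals the defining expression for $\lambda^{(j)}$ in \eqref{eq:lemma_choose_meas_specific_gj} with $g_j$ replaced by $\tilde g$, hence is positive, so $M^{(j)}_m$ has a positive eigenvalue $\lambda^{(j)}$; a normalized eigenvector $v^{(j)}$ then gives $g_j=\sum_k v^{(j)}_k f_k$ with $(v^{(j)})^T M^{(j)}_m v^{(j)}=\lambda^{(j)}>0$, which is \eqref{eq:lemma_choose_meas_specific_gj}. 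The one genuinely hard step is the localized potentials input: establishing that the potential energy can be concentrated on $\Gamma_j$ relative to its complement for this transmission problem is the substantive analytic content, whereas the affine dictionary $\gamma=r(\xi)$, $z^{(j)}\mapsto\kappa^{(j)}$ together with the density and continuity argument amount to routine bookkeeping.
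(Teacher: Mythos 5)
Your proposal is correct and follows essentially the same route as the paper: part (a) is the paper's lemma~\ref{lemma:F_prop_meas_specific}, i.e.\ reduction to Theorem~\ref{thm:simple_Newton} via the rescaling $\Phi$ of lemma~\ref{lemma:F_rescaled} with the key identity $r(z^{(j)})=\kappa^{(j)}$ and affine invariance of Newton's method, and part (b) is the paper's argument via the localized potentials result of \cite{harrach2019global} combined with density, continuity, and the quadratic-form/eigenvector computation for $M_m^{(j)}$.
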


To prove Theorem~\ref{theorem:choose_meas_specific}, we formulate the consequences
of our Newton convergence theory in section \ref{Sec:Newton} in the following lemma.

\begin{lemma}\label{lemma:F_prop_meas_specific}
If $n(n+3)<\frac{b}{b-a}$ then
\[
\textstyle [a,b]^n\subset I\subset I' \subset (0,\infty)^n.
\]
where $I:=(a-\frac{b-a}{n-1},b+\frac{b-a}{n-1})^n$, and $I':=\left[b-n(b-a),2b-a\right]^n$.
If, for all $j=1,\ldots,n$, $g_j\in L^2(\partial \Omega)$ fulfills \eqref{eq:lemma_choose_meas_specific_gj}, then 
the function
\[
F:\ (0,\infty)^n\to \R^n, \quad F(\gamma):=\left( \int_{\partial \Omega} g_j \Lambda(\gamma) g_j \dx[s]\right)_{j=1}^n\in \R^n
\]
has the following properties 
\begin{enumerate}[(a)]
\item $F$ is pointwise convex, anti-monotone, and continuously differentiable.
\item $F$ is injective on $I'$, and its Jacobian $F'(\gamma)$ is invertible for all $\gamma\in I'$.
\item With $L:=(n+2) \left( \min_{j=1,\ldots,n}\ \lambda^{(j)} \right)^{-1}$, we have that for all $\gamma,\gamma'\in I'$
\[
\norm{\gamma-\gamma'}_\infty \leq L \norm{F(\gamma)-F(\gamma')}_\infty \quad \text{ and } \quad \norm{F'(\gamma)^{-1}}_\infty \leq L.
\]
\item For all $\gamma\in [a,b]^n$, $F(b)\leq F(\gamma) \leq F(a)$. Moreover, for all $y\in \R^n$ with $F(b)\leq  y \leq F(a)$ there exists a unique $\gamma\in I$ with $F(\gamma)=y$. The
Newton iteration 
\[
\gamma^{(i+1)}:=\gamma^{(i)}-F'(\gamma^{(i)})^{-1} \left( F(\gamma^{(i)}) - y \right), \ \text{ started with $\gamma^{(0)}:=a \in \R^n$,}
\]
produces a sequence $(\gamma^{(i)})_{i\in \N}\subset I'$ that converges quadratically to $\gamma$.
\end{enumerate}
\end{lemma}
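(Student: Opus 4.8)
The plan is to apply Theorem~\ref{thm:simple_Newton} to the rescaled function $\Phi$ constructed in lemma~\ref{lemma:F_rescaled} and to transport every conclusion back to $F$ through the affine change of variables $\gamma=r(\xi)=b\1-(b-a)\xi$. I would first dispose of the interval inclusions $[a,b]^n\subset I\subset I'\subset(0,\infty)^n$ by elementary estimates: $[a,b]^n\subset I$ is immediate from $\frac{b-a}{n-1}>0$; the inclusion $I\subset I'$ reduces to $\frac{1}{n-1}\leq 1$ and $(n-1)^2\geq 1$, both of which hold because $n\geq 2$; and $I'\subset(0,\infty)^n$ amounts to $b-n(b-a)>0$, i.e.\ $\frac{b}{b-a}>n$, which follows from the standing hypothesis $n(n+3)<\frac{b}{b-a}$. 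The same hypothesis guarantees $[-2,n(n+3)]^n\subset U=(-\infty,\frac{b}{b-a})^n$, so that Theorem~\ref{thm:simple_Newton} is applicable to $\Phi$ on its domain.

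The central step is to check the directional-derivative hypothesis \eqref{eq:thm_Newton_simple_assumption} for $\Phi$ at the points $z^{(j)}=-2e_j+n(n+3)e_j'$ with directions $d^{(j)}=e_j-(n^2+3n+1)e_j'$. A direct computation of $r(z^{(j)})=b\1-(b-a)z^{(j)}$ shows $r(z^{(j)})=\kappa^{(j)}$, the coefficient defined in Theorem~\ref{theorem:choose_meas_specific}, so that $\Phi'(z^{(j)})=-F'(\kappa^{(j)})$ by lemma~\ref{lemma:F_rescaled}. Using the explicit entries $e_l^TF'(\gamma)e_k=-\int_{\Gamma_k}|u_\gamma^{g_l}|^2\dx[s]$, I would evaluate the $j$-th component $e_j^T\Phi'(z^{(j)})d^{(j)}$ and find that it equals exactly $\lambda^{(j)}$, which is positive by assumption \eqref{eq:lemma_choose_meas_specific_gj}. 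Hence $\Phi'(z^{(j)})d^{(j)}\not\leq 0$ for every $j$, which is precisely \eqref{eq:thm_Newton_simple_assumption}. Since the $j$-th entry is $\lambda^{(j)}$, the maximum entering the constant $L$ of Theorem~\ref{thm:simple_Newton} is at least $\lambda^{(j)}$, so the resulting Lipschitz constant is bounded by $L=(n+2)(\min_j\lambda^{(j)})^{-1}$, matching the statement.

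With the hypotheses verified, I would read off the assertions of Theorem~\ref{thm:simple_Newton} for $\Phi$ and translate them coordinate-by-coordinate. Property~(a) is immediate from lemma~\ref{lemma:F_rescaled} (hence from lemma~\ref{lemma:Lambda_properties}). For (b) and (c) note that $r$ maps $[-1,n]^n$ bijectively onto $I'$; since $\Phi(\xi)=\frac{1}{b-a}(F(r(\xi))-y)$ and $\Phi'(\xi)=-F'(r(\xi))$, injectivity of $\Phi$ on $[-1,n]^n$ and invertibility of $\Phi'$ there translate to the same for $F$ on $I'$, and the identities $\norm{\xi-\eta}_\infty=\frac{1}{b-a}\norm{\gamma-\gamma'}_\infty$ and $\norm{\Phi(\xi)-\Phi(\eta)}_\infty=\frac{1}{b-a}\norm{F(\gamma)-F(\gamma')}_\infty$ make the factor $b-a$ cancel, yielding both estimates in (c) with the constant $L$ above. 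For (d), the inequality $F(b)\leq F(\gamma)\leq F(a)$ on $[a,b]^n$ is anti-monotonicity of $F$; and for $y$ with $F(b)\leq y\leq F(a)$ one has $\Phi(0)\leq 0\leq\Phi(1)$ by lemma~\ref{lemma:F_rescaled}, so Theorem~\ref{thm:simple_Newton}(b) provides a unique zero $\hat\xi$ of $\Phi$ in $(-\frac{1}{n-1},\frac{n}{n-1})^n$. Since $r$ carries this cube onto $I$, setting $\hat\gamma=r(\hat\xi)$ gives the unique $\hat\gamma\in I$ with $F(\hat\gamma)=y$.

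The remaining point is the Newton iteration, where the main thing to spell out is the affine invariance of Newton's method. A short calculation shows that if $\xi^{(i)}$ denotes the Newton sequence for $\Phi$ with $\xi^{(0)}=\1$, then $\gamma^{(i)}:=r(\xi^{(i)})$ satisfies $\gamma^{(i+1)}=\gamma^{(i)}-F'(\gamma^{(i)})^{-1}(F(\gamma^{(i)})-y)$, because the $(b-a)$ from $r$ cancels the $\frac{1}{b-a}$ in $\Phi$ and $\Phi'$; moreover $\gamma^{(0)}=r(\1)=a$. Thus the $F$-Newton sequence of (d) is the image under $r$ of the $\Phi$-Newton sequence, its iterates lie in $I'$ (the image of $(-1,n)^n$), and it converges to $\hat\gamma$, quadratically by local Lipschitz continuity of $\Phi'$ (equivalently $F'$), guaranteed by lemma~\ref{lemma:F_rescaled}. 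I expect the most delicate part of the whole argument to be the second paragraph: correctly identifying $r(z^{(j)})=\kappa^{(j)}$ and matching the directional-derivative evaluation to the quantity $\lambda^{(j)}$ (including the observation that the maximum in the stability constant is attained at or above the $j$-th entry), since a sign or index slip there would break both the hypothesis check and the value of $L$.
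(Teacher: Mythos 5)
Your proof is correct and follows essentially the same route as the paper: apply Theorem~\ref{thm:simple_Newton} to the rescaled function $\Phi$ from lemma~\ref{lemma:F_rescaled}, verify the directional-derivative hypothesis via the identity $r(z^{(j)})=\kappa^{(j)}$ (which yields $e_j^T\Phi'(z^{(j)})d^{(j)}=\lambda^{(j)}>0$), and transport all conclusions back to $F$ through the affine map $r$, using affine invariance of the Newton iteration. The only cosmetic differences are that you check the interval inclusions by elementary inequalities where the paper reads them off as images under $r$, and that you spell out the norm-scaling cancellations and the bound $\max_k e_k^T\Phi'(z^{(j)})d^{(j)}\geq\lambda^{(j)}$ slightly more explicitly than the paper does.
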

\begin{proof}
We define $r$ and $\Phi$ as in lemma~\ref{lemma:F_rescaled}.
Then,
\[
\textstyle
r([0,1]^n)=[a,b]^n, \quad 
r((-\frac{1}{n-1}, 1+\frac{1}{n-1} ) ^n)=I, \quad 
r([-1,n]^n)=I',
\]
so that the interval inclusions follow from the anti-monotonicity of $r$.

Lemma~\ref{lemma:F_rescaled} yields assertion (a) and that $\Phi:\ U\subseteq \R^n\to \R^n$ is a continuously differentiable pointwise convex and monotonic function
on $U=(-\infty,\textstyle \frac{b}{b-a})^n$. The assumption $n(n+3)<\frac{b}{b-a}$ guarantees that $U$ contains $[-2,n(n+3)]^n$.
Moreover, using that 
\[
\kappa^{(j)}%=b\1-(b-a)(-2 e_j + n(n+3) e_j')
=r(-2 e_j + n(n+3) e_j')\in (0,\infty)^n,
\]
it follows from lemma~\ref{lemma:F_rescaled} and \eqref{eq:lemma_choose_meas_specific_gj} that
\begin{align*}
\lefteqn{e_j^T \Phi'(-2 e_j + n(n+3) e_j')\left( e_j  - (n^2+3n+1)  e_j' \right)}\\
&=\int_{\Gamma_j}  |u_{\kappa^{(j)}}^{g_j}|^2  \dx[s] -
(n^2+3n+1) \int_{\Gamma\setminus \Gamma_j}  |u_{\kappa^{(j)}}^{g_j}|^2  \dx[s]
=\lambda^{(j)}>0,
\end{align*}
so that $\Phi$ fulfills the assumptions of Theorem \ref{thm:simple_Newton}.

It follows that $\Phi$ is injective on $[-1,n]^n$, and that for all $\xi,\xi'\in [-1,n]^n$,
$\Phi'(\xi)$ is invertible, 
\[
\norm{\xi-\xi'}_\infty\leq L \norm{\Phi(\xi)-\Phi(\xi')}_\infty, \quad \text{ and } \quad
\norm{\Phi'(\xi)^{-1}}_\infty\leq L.
\]
This proves that $F$ fulfills (b) and (c) on $r([-1,n]^n)=I'$.

The first assertion in (d) follows from the anti-monotonicity of $F$.
For the remaining assertions in (d) note that, by lemma~\ref{lemma:F_rescaled}, $\Phi'$ is also locally Lipschitz continuous, and $\Phi(0)\leq 0\leq \Phi(1)$. Hence, Theorem \ref{thm:simple_Newton}(b) yields
that there exists a unique $\xi\in (-\frac{1}{n-1}, 1+\frac{1}{n-1} ) ^n$ with $\Phi(\xi)=0$, 
and thus a unique $\gamma\in I$ that solves $F(\gamma)=y$.

Moreover, Theorem \ref{thm:simple_Newton}(b) yields that the Newton iteration applied to $\Phi$ with initial value 
$\xi^{(0)}=\1$ does not leave the interval $(-1,n)^n$ and quadratically converges against the unique solution $\xi$ of $\Phi(\xi)=0$.
Since, the Newton iteration is invariant under invertible linear transformations, this yields that the Newton iteration applied to 
$F$ with $\gamma^{(0)}:=a\in \R^n$ does not leave $I'$ and converges quadratically against the unique solution $\gamma$ of $F(\gamma)=y$.
\hfill $\Box$
\kommentar{
Just to be sure:
\begin{align*}
\frac{1}{b-a} \norm{-\gamma +\gamma'}_\infty 
&=\norm{\frac{1}{b-a}(b\1-\gamma) - \frac{1}{b-a}(b\1-\gamma')}_\infty\\
&= \norm{r^{-1}(\gamma)-r^{-1}(\gamma')}_\infty\\ 
&= \norm{\xi-\eta}_\infty \\
&\leq L \norm{\Phi(\xi)-\Phi(\eta)}_\infty\\
&=  L \norm{\frac{1}{b-a}(F(r(\xi))-y) - \frac{1}{b-a}(F(r(\eta))-y)}_\infty\\
&=  L \norm{\frac{1}{b-a} ( F(r(\xi)) - F(r(\eta)) )}_\infty\\
&= \frac{L}{b-a} \norm{F(\gamma)-F(\gamma')},
\end{align*}
}
\end{proof}

%We can now prove Theorem~\ref{theorem:choose_meas_specific}.

\begin{proof}[of Theorem~\ref{theorem:choose_meas_specific}]
\begin{enumerate}[(a)]
\item follows from lemma~\ref{lemma:F_prop_meas_specific}.
\item Let $j\in \{1,\ldots,n\}$. From the localized potentials result in \cite[Lemma 4.3]{harrach2019global}, it follows that there exists $g\in L^2(\partial \Omega)$ with 
\begin{equation*}
\int_{\Gamma_j}  |u_{\kappa^{(j)}}^{g}|^2     \dx[s] -
(n^2+3n+1)\int_{\Gamma\setminus \Gamma_j}   |u_{\kappa^{(j)}}^{g} |^2   \dx[s]>1.
\end{equation*}
By density and continuity, for sufficiently large $m\in \N$, there exists a function
$f\in \mathrm{span}\{f_1,\ldots,f_m \}$ with 
\begin{equation*}
\int_{\Gamma_j}  |u_{\kappa^{(j)}}^{f}|^2     \dx[s] -
(n^2+3n+1)\int_{\Gamma\setminus \Gamma_j}   |u_{\kappa^{(j)}}^{f} |^2   \dx[s]>\frac{1}{2}.
\end{equation*}
Writing $f=\sum_{l=1}^m v_l f_l$ with $v_l\in \R$, and $v:=(v_l)_{l=1}^m\in \R^m$, we thus have
\begin{equation*}
v^T M_m^{(j)} v=\int_{\Gamma_j}  |u_{\kappa^{(j)}}^{f}|^2     \dx[s] -
(n^2+3n+1)\int_{\Gamma\setminus \Gamma_j}   |u_{\kappa^{(j)}}^{f} |^2   \dx[s]>\frac{1}{2}.
\end{equation*}
This shows that the symmetric matrix $M_m^{(j)}\in \R^{m\times m}$ must have a positive eigenvalue when its dimension $m\in \N$ is sufficiently large. On the other hand, every normalized eigenvector $v^{(j)}=(v^{(j)}_1,\ldots,v^{(j)}_m)\in \R^m$ 
corresponding to a positive eigenvector $\lambda^{(j)}>0$ fulfills
\begin{align*}
0 < \lambda^{(j)} &= (v^{(j)})^T M_m^{(j)} v^{(j)}\\
&= \int_{\Gamma_j}  |u_{\kappa^{(j)}}^{g_j}|^2     \dx[s] -
(n^2+3n+1)\int_{\Gamma\setminus \Gamma_j}   |u_{\kappa^{(j)}}^{g_j} |^2   \dx[s],
\end{align*}
with $g_j:=\sum_{k=1}^m v^{(j)}_k f_k\in L^2(\partial \Omega)$, so that (b) is proven.
\hfill $\Box$
\end{enumerate}
\end{proof}

%%%%%%%%%%%%%%%%%%%%%%%%%%%%%%%%%%%%%%%%%%%%%%%%%%%%%%%
\subsubsection{Choosing the measurements (for general bounds)}\label{subsect:choose_meas_general}
%%%%%%%%%%%%%%%%%%%%%%%%%%%%%%%%%%%%%%%%%%%%%%%%%%%%%%%

We now show how to treat the case of general bounds $b>a>0$. 

%\begin{align*}
% 1+2\epsilon<\frac{b}{b-a} \Longleftrightarrow 2\epsilon<\frac{b}{b-a}-\frac{b-a}{b-a}=\frac{a}{b-a}\\
% c:=2+\frac{2}{\epsilon}=2+2 \frac{4(b-a)}{a}=2+\frac{8(b-a)}{a}=\frac{8b-6a}{a}=2(\frac{4b}{a}-3)
%\end{align*}

\begin{theorem}\label{theorem:choose_meas_general}
Given $b>a>0$, choose $0<\epsilon<\frac{a}{2(b-a)}$, and set $c:=2+\frac{2}{\epsilon}$, 
$
K:=\operatorname{ceil}\left( \frac{2cn}{\epsilon}\left(1+\epsilon + \frac{1+\epsilon}{cn}\right)\right),
$
and $\beta:= \frac{1+\epsilon+cn+2\epsilon cn}{\epsilon}$.
Let $\kappa^{(j,k)}\in L^\infty_+(\Gamma)$ denote the piecewise constant function
\begin{align}\label{eq:def_kappa_jk}
\kappa^{(j,k)}:=%&r \left(\left( -\frac{1+\epsilon}{cn}+(k-2)\frac{\epsilon}{2cn} \right) e_j + \left( 1+2\epsilon  \right)e_j'\right)\\
%=
&\left\{ \begin{array}{l l}
  b- (b-a)\left( -\frac{1+\epsilon}{cn}+(k-2)\frac{\epsilon}{2cn} \right)  & \text{ on } \Gamma_j,\\
  a-2\epsilon(b-a) & \text{ on } \Gamma\setminus \Gamma_j. %=b- (b-a)(1+2\epsilon)
\end{array} \right.\\
\end{align}

\begin{enumerate}[(a)]
\item If $g_j\in L^2(\partial \Omega)$, $j=1,\ldots,n$, fulfills 
\begin{align}\label{eq:lemma_choose_meas_general_gj}
\lambda_{j,k}&:=
\frac{1}{2}\int_{\Gamma_j}  |u_{\kappa^{(j,k)}}^{g_j}|^2  \dx[s]
- \beta \int_{\Gamma\setminus \Gamma_j} |u_{\kappa^{(j,k)}}^{g_j}|^2  \dx[s]>0,
\end{align}
for all $k=1,\ldots,K$, then the finite-dimensional non-linear inverse problem of determining 
\[
\gamma=\left(\gamma_j\right)_{j=1}^n\in [a,b]^n \quad \text{ from }\quad
F(\gamma):=\left(\int_{\partial\Omega} g_j \Lambda(\gamma) g_j\dx[s]\right)_{j=1}^n\in \R^n
\]
has a unique solution in $[a,b]^n$, and $\gamma$ depends Lipschitz continuously $\Lambda(\gamma)$.
Moreover, the iterates of Newton's method applied to the problem of determining $\gamma$ from $F(\gamma)$, with 
initial value $\gamma^{(0)}=a\in \R^n$, quadratically converge to the unique solution $\gamma$ (see lemma~\ref{lemma:F_prop_meas_general} for 
more details on the properties of $F$).
\item In any large enough finite dimensional subspace of $L^2(\partial \Omega)$, one can find $g_1,\ldots,g_n$ fulfilling
\eqref{eq:lemma_choose_meas_general_gj} by the following construction: 

Let $(f_m)_{m\in \N}\subseteq L^2(\partial \Omega)$ be a sequence of vectors with dense linear span in $L^2(\partial \Omega)$. 
Let $j\in \{1,\ldots,n\}$, $m\in \N$, and $v^{(j)}=(v^{(j)}_1,\ldots,v^{(j)}_m)\in \R^m$
be a normalized eigenvector corresponding to a largest eigenvalue of the symmetric matrix
\[
M_m^{(j)}:=\frac{\alpha}{2} S^{(j)} - \beta \sum_{k=1}^K  R^{(j,k)} \in \R^{m\times m},
\]
where $\alpha:=( 1+ \frac{b-a}{b} (1+\epsilon + \frac{1+\epsilon}{cn}))^{-1}$, and for $i,l=1,\ldots,m$,
\[
e_i^T S^{(j)} e_l:=\int_{\Gamma}   u_{\kappa^{(j,1)}}^{f_i}  u_{\kappa^{(j,1)}}^{f_l}   \dx[s],
\ \text{ and } \
e_i^T R^{(j,k)} e_l:=\int_{\Gamma\setminus \Gamma_j}   u_{\kappa^{(j,k)}}^{f_i}  u_{\kappa^{(j,k)}}^{f_l}   \dx[s],
\]
Then, for sufficiently large dimension $m\in \N$, \eqref{eq:lemma_choose_meas_general_gj} is fulfilled by
\[
g_{j}:=\sum_{l=1}^m v^{(j)}_l f_l\in \mathrm{span}\{f_1,\ldots,f_m \}\subset L^2(\partial \Omega).
\]
%the eigenvector $v^{(j)}$ fulfills for all $k\in \{1,\ldots,K\}$
%\begin{align}\label{eq:lemma_choose_meas_general_vj}
%\lambda_{j,k}&:=
%\frac{1}{2} {v^{(j)}}^T S^{(j,k)} v^{(j)}
%- \frac{1+\epsilon+cn+2\epsilon cn}{\epsilon} {v^{(j)}}^T R^{(j,k)} v^{(j)}>0,
%\end{align}
\end{enumerate}
\end{theorem}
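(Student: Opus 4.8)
For part~(a), the plan is to mirror the proof of Theorem~\ref{theorem:choose_meas_specific}, but with Theorem~\ref{thm:tight_Newton} in place of Theorem~\ref{thm:simple_Newton}. I would first pass to the rescaled function $\Phi$ of lemma~\ref{lemma:F_rescaled}, built with $r(\xi)=b\1-(b-a)\xi$, so that $\Phi$ is continuously differentiable, pointwise convex and monotonic, $\Phi'(\xi)=-F'(r(\xi))$, and $\Phi(0)\leq 0\leq\Phi(\1)$. The decisive bookkeeping is that $\kappa^{(j,k)}=r(z^{(j,k)})$ for the points $z^{(j,k)}$ of Theorem~\ref{thm:tight_Newton} (compare \eqref{eq:Newton_tight_def_z} with \eqref{eq:def_kappa_jk}), and that, using $e_j^T F'(\gamma)e_l=-\int_{\Gamma_l}|u_\gamma^{g_j}|^2\dx[s]$ together with $d^{(j)}=\frac12 e_j-\beta e_j'$, one gets $e_j^T\Phi'(z^{(j,k)})d^{(j)}=\lambda_{j,k}$. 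Hence the hypothesis \eqref{eq:lemma_choose_meas_general_gj} is \emph{exactly} assumption \eqref{eq:thm_Newton_tight_assumption}. It remains to check the parameter conditions: $c=2+\frac2\epsilon$ meets $c\geq 2+\frac2\epsilon$ with equality, and $\epsilon<\frac{a}{2(b-a)}$ gives $1+2\epsilon<\frac{b}{b-a}$, so $U=(-\infty,\frac{b}{b-a})^n$ contains the required box. Applying Theorem~\ref{thm:tight_Newton} to $\Phi$ and pulling the conclusions back through the affine anti-monotone map $r$ (using $r([0,1]^n)=[a,b]^n$ and the invariance of Newton's iteration under invertible affine transformations) yields the uniqueness, Lipschitz stability and quadratic convergence of~(a); I would record these consequences in the auxiliary lemma~\ref{lemma:F_prop_meas_general}.

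For part~(b) I would first expand the quadratic form of the constructed matrix. For $g=\sum_l v_l f_l$,
\begin{equation*}
v^T M_m^{(j)} v = \frac{\alpha}{2}\int_\Gamma |u_{\kappa^{(j,1)}}^{g}|^2\dx[s] - \beta\sum_{k=1}^K\int_{\Gamma\setminus\Gamma_j}|u_{\kappa^{(j,k)}}^{g}|^2\dx[s],
\end{equation*}
so a largest eigenvector belonging to a positive eigenvalue produces a $g$ with $v^TM_m^{(j)}v>0$. The argument then splits into an existence step and a transfer step. For existence I would prove a \emph{simultaneous} localized potentials lemma: since the coefficients $\kappa^{(j,k)}$ all coincide on $\Gamma\setminus\Gamma_j$ and are uniformly bounded above and below on $\Gamma$, the localized potentials technique of \cite{gebauer2008localized} (as used in \cite[Lemma~4.3]{harrach2019global}) can be run to produce a single $g$ for which all the boundary energies $\int_{\Gamma\setminus\Gamma_j}|u_{\kappa^{(j,k)}}^{g}|^2\dx[s]$ are simultaneously small relative to $\int_\Gamma|u_{\kappa^{(j,1)}}^{g}|^2\dx[s]$; by density of $\mathrm{span}\{f_m\}$ this forces $v^TM_m^{(j)}v>0$ once $m$ is large, i.e.\ $M_m^{(j)}$ has a positive eigenvalue.

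The transfer step turns $v^TM_m^{(j)}v>0$ into $\lambda_{j,k}>0$ for \emph{every} individual $k$, and rests on two ingredients. First, $v^TM_m^{(j)}v>0$ already gives the localization at $k=1$, namely $\int_{\Gamma\setminus\Gamma_j}|u_{\kappa^{(j,1)}}^{g}|^2\dx[s]<\frac{\alpha}{2\beta}\int_\Gamma|u_{\kappa^{(j,1)}}^{g}|^2\dx[s]$, hence $\int_{\Gamma_j}|u_{\kappa^{(j,1)}}^{g}|^2\dx[s]>(1-\frac{\alpha}{2\beta})\int_\Gamma|u_{\kappa^{(j,1)}}^{g}|^2\dx[s]$. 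Second, since $\kappa^{(j,1)}\geq\kappa^{(j,k)}$ with equality off $\Gamma_j$, the monotonicity relation of \cite[Lemma~4.1]{harrach2019global} yields the comparison $\int_{\Gamma_j}|u_{\kappa^{(j,k)}}^{g}|^2\dx[s]\geq\int_{\Gamma_j}|u_{\kappa^{(j,1)}}^{g}|^2\dx[s]$ on the region $\Gamma_j$ where the coefficients differ. Choosing $\alpha$ so that $1-\frac{\alpha}{2\beta}\geq\alpha$, these combine to $\int_{\Gamma_j}|u_{\kappa^{(j,k)}}^{g}|^2\dx[s]\geq\alpha\int_\Gamma|u_{\kappa^{(j,1)}}^{g}|^2\dx[s]$; feeding in the per-$k$ consequence $\beta\int_{\Gamma\setminus\Gamma_j}|u_{\kappa^{(j,k)}}^{g}|^2\dx[s]<\frac{\alpha}{2}\int_\Gamma|u_{\kappa^{(j,1)}}^{g}|^2\dx[s]$ gives $\frac12\int_{\Gamma_j}|u_{\kappa^{(j,k)}}^{g}|^2\dx[s]>\beta\int_{\Gamma\setminus\Gamma_j}|u_{\kappa^{(j,k)}}^{g}|^2\dx[s]$, that is $\lambda_{j,k}>0$ for all $k$, as required for part~(a).

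I expect the main obstacle to be precisely the simultaneous localized potentials result together with the calibration of $\alpha$, $\beta$ and $K$. The delicate point is that one measurement $g_j$ must satisfy the strict inequality \eqref{eq:lemma_choose_meas_general_gj} for all $K$ sampling coefficients at once, whereas the eigenvector only guarantees positivity of a single averaged quadratic form. Making that averaged bound imply each individual bound is exactly what the factor $\alpha$, the summation over $k$, and the resolution $K$ (fine enough, by the choice of step $\frac{\epsilon}{2cn}$, to cover the entire $j$-th edge of $\overline O$) are engineered to accomplish; checking that these constants interlock correctly is the crux of the argument.
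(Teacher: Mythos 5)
Your part~(a) is correct and is exactly the paper's argument: lemma~\ref{lemma:F_prop_meas_general} is proved by the reduction you describe ($\kappa^{(j,k)}=r(z^{(j,k)})$, $e_j^T\Phi'(z^{(j,k)})d^{(j)}=\lambda_{j,k}$, then Theorem~\ref{thm:tight_Newton} and invariance of Newton's method under the affine map $r$). For part~(b), however, there are two genuine gaps. The first is the existence step: the claim that ``the localized potentials technique of \cite{gebauer2008localized} can be run'' to produce one $g$ making all $K$ boundary energies $\int_{\Gamma\setminus\Gamma_j}|u_{\kappa^{(j,k)}}^{g}|^2\dx[s]$ simultaneously small is precisely the paper's lemma~\ref{lemma:sim_loc_pot}, and it is not a routine consequence of the single-coefficient result \cite[Lemma~4.3]{harrach2019global}. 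The paper proves it by factorizing the measurements through operators $A_0,A_1,\ldots,A_K$, establishing the range non-inclusion $\range(A_0)\not\subseteq\range(A_1)+\ldots+\range(A_K)$ via unique continuation, and reaching a contradiction from the fact that the infinite-dimensional space $L^2(\Gamma_0)$ cannot be the range of a compact operator. You correctly flag this simultaneity as the main obstacle, but you give no argument for it, and it is the technical heart of part~(b).

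The second gap is in your transfer step. Your comparison $\int_{\Gamma_j}|u_{\kappa^{(j,k)}}^{g}|^2\dx[s]\geq\int_{\Gamma_j}|u_{\kappa^{(j,1)}}^{g}|^2\dx[s]$ is in fact valid (apply lemma~\ref{lemma:Lambda_properties}(b) twice with the roles of the two coefficients interchanged and add; this is arguably cleaner than the paper's lemma~\ref{lemma:norm_control}), but the calibration $1-\frac{\alpha}{2\beta}\geq\alpha$ is not yours to choose: $\alpha$ and $\beta$ are fixed by the theorem, and that inequality is equivalent to $2\beta\,\frac{b-a}{b}\left(1+\epsilon+\frac{1+\epsilon}{cn}\right)\geq 1$, which fails for admissible data --- e.g.\ $n=2$, $\epsilon=0.1$ (so $\beta=539$) and $\frac{b-a}{b}=10^{-6}$, where the left-hand side is about $1.2\times 10^{-3}$, while the constraint $\epsilon<\frac{a}{2(b-a)}$ only becomes easier as $b-a\to 0$. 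The failure comes from normalizing the localization against $\int_\Gamma|u_{\kappa^{(j,1)}}^{g}|^2\dx[s]$. The paper's proof instead expands the quadratic form as $\frac{\alpha}{2}\int_{\Gamma_j}|u_{\kappa^{(j,1)}}^{g_j}|^2\dx[s]-\beta\sum_{k}\int_{\Gamma\setminus\Gamma_j}|u_{\kappa^{(j,k)}}^{g_j}|^2\dx[s]$ (note it thereby overrides the displayed definition of $S^{(j)}$ in the statement, which integrates over all of $\Gamma$), so that the per-$k$ estimate $\frac12\int_{\Gamma_j}|u_{\kappa^{(j,k)}}^{g_j}|^2\dx[s]-\beta\int_{\Gamma\setminus\Gamma_j}|u_{\kappa^{(j,k)}}^{g_j}|^2\dx[s]\geq\frac{\alpha}{2}\int_{\Gamma_j}|u_{\kappa^{(j,1)}}^{g_j}|^2\dx[s]-\beta\sum_{k'}\int_{\Gamma\setminus\Gamma_j}|u_{\kappa^{(j,k')}}^{g_j}|^2\dx[s]$ follows from the solution comparison alone (the paper uses lemma~\ref{lemma:norm_control}, whose constant $1/\alpha$ is tailored to a coefficient perturbation supported on $\Gamma_j$), and no relation between $\alpha$ and $\beta$ is ever needed. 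With that $\Gamma_j$-normalization your own two ingredients would close the argument immediately; with the $\Gamma$-normalization they do not.
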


As in subsection \ref{subsect:choose_meas_specific}, we prove Theorem~\ref{theorem:choose_meas_general}(a) by applying
our Newton convergence theory from section \ref{Sec:Newton} in the following lemma.

\begin{lemma}\label{lemma:F_prop_meas_general}
We have that
\[
\textstyle [a,b]^n\subseteq I:=\left[ a-(b-a)\epsilon, b+(b-a)\frac{1+\epsilon}{cn}\right]^n
\subseteq (0,\infty)^n.
\]
If, for all $j=1,\ldots,n$, $g_j\in L^2(\partial \Omega)$ fulfills \eqref{eq:lemma_choose_meas_general_gj} for all $k=1,\ldots,K$, then 
the function
\[
F:\ (0,\infty)^n\to \R^n, \quad F(\gamma):=\left( \int_{\partial \Omega} g_j \Lambda(\gamma) g_j \dx[s]\right)_{j=1}^n\in \R^n
\]
has the following properties:
\begin{enumerate}[(a)]
\item $F$ is pointwise convex, anti-monotone, and continuously differentiable.
\item $F$ is injective on $I$, and its Jacobian $F'(\gamma)$ is invertible for all $\gamma\in I$.
\item For all $\gamma,\gamma'\in I$,
\[
\norm{\gamma-\gamma'}_\infty \leq L \norm{F(\gamma)-F(\gamma')}_\infty \quad \text{ and } \quad \norm{F'(\gamma)^{-1}}_\infty \leq L,
\]
where $L:=\left( \min_{j=1,\ldots,n,\ k=1,\ldots,K}\ \lambda_{j,k} \right)^{-1}$.
\item For all $\gamma\in [a,b]^n$, $F(b)\leq F(\gamma) \leq F(a)$. Moreover, for all $y\in \R^n$ with $F(b)\leq y \leq F(a)$ there exists a unique $\gamma\in I$ with $F(\gamma)=y$. The
Newton iteration 
\[
\gamma^{(i+1)}:=\gamma^{(i)}-F'(\gamma^{(i)})^{-1} \left( F(\gamma^{(i)}) - y \right), \ \text{ started with } \
\gamma^{(0)}:=a \in \R^n,
\]
produces a sequence $(\gamma^{(i)})_{i\in \N}\subset I$ that converges quadratically to $\gamma$.
\end{enumerate}
\end{lemma}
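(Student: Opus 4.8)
The plan is to mirror the proof of lemma~\ref{lemma:F_prop_meas_specific}, replacing the role of Theorem~\ref{thm:simple_Newton} by the tighter Theorem~\ref{thm:tight_Newton}. I would first introduce $r$ and $\Phi$ exactly as in lemma~\ref{lemma:F_rescaled}, so that $r(\xi)=b\1-(b-a)\xi$ is an affine, anti-monotone bijection and $\Phi(\xi)=\frac{1}{b-a}(F(r(\xi))-y)$ is continuously differentiable, pointwise convex and monotonic with $\Phi'(\xi)=-F'(r(\xi))$. Since $r$ is anti-monotone, it maps the interval $\overline O=[-\frac{1+\epsilon}{cn},1+\epsilon]^n$ from Theorem~\ref{thm:tight_Newton} onto $r(\overline O)=[a-(b-a)\epsilon,\,b+(b-a)\frac{1+\epsilon}{cn}]^n=I$, and it maps $[0,1]^n$ onto $[a,b]^n$; the stated inclusions $[a,b]^n\subseteq I\subseteq(0,\infty)^n$ then reduce to the elementary estimates $\epsilon>0$ and $\epsilon<\frac{a}{2(b-a)}<\frac{a}{b-a}$, the latter guaranteeing $a-(b-a)\epsilon>0$. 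Assertion (a) is then immediate from lemma~\ref{lemma:F_rescaled}.

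Next I would verify that $\Phi$ satisfies the hypotheses of Theorem~\ref{thm:tight_Newton}. The domain requirement $[-\frac{1+\epsilon}{cn}-\frac{\epsilon}{2cn},1+2\epsilon]^n\subset U=(-\infty,\frac{b}{b-a})^n$ amounts to $1+2\epsilon<\frac{b}{b-a}=1+\frac{a}{b-a}$, i.e.\ to $\epsilon<\frac{a}{2(b-a)}$, which is exactly the standing assumption. The decisive computation is the identification $r(z^{(j,k)})=\kappa^{(j,k)}$: inserting $z^{(j,k)}$ from \eqref{eq:Newton_tight_def_z} into $r$ reproduces, componentwise, the piecewise-constant function \eqref{eq:def_kappa_jk}, with value $b-(b-a)(-\frac{1+\epsilon}{cn}+(k-2)\frac{\epsilon}{2cn})$ on $\Gamma_j$ and value $a-2\epsilon(b-a)$ on $\Gamma\setminus\Gamma_j$. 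Using $\Phi'=-F'\circ r$, the entry formula $e_l^TF'(\gamma)e_m=-\int_{\Gamma_m}|u_\gamma^{g_l}|^2\dx[s]$ from lemma~\ref{lemma:F_rescaled}, and $e_j'=\1-e_j$, the $j$-th component of $\Phi'(z^{(j,k)})d^{(j)}$ with $d^{(j)}=\frac{1}{2}e_j-\beta e_j'$ evaluates to exactly
\[
e_j^T\Phi'(z^{(j,k)})d^{(j)}=\frac{1}{2}\int_{\Gamma_j}|u_{\kappa^{(j,k)}}^{g_j}|^2\dx[s]-\beta\int_{\Gamma\setminus\Gamma_j}|u_{\kappa^{(j,k)}}^{g_j}|^2\dx[s]=\lambda_{j,k}>0,
\]
so that $\Phi'(z^{(j,k)})d^{(j)}\not\leq0$ and \eqref{eq:thm_Newton_tight_assumption} holds for $\Phi$.

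Theorem~\ref{thm:tight_Newton}(a) then gives injectivity of $\Phi$ on $\overline O$, invertibility of $\Phi'$, and the Lipschitz bounds with constant $L_\Phi=(\min_{j,k}\max_l e_l^T\Phi'(z^{(j,k)})d^{(j)})^{-1}$. Since $\max_l\geq e_j^T$, we have $L_\Phi\leq(\min_{j,k}\lambda_{j,k})^{-1}=L$, so the estimates persist with the (possibly larger) constant $L$; transporting them back through $r$, the factors $b-a$ cancel because $\norm{\xi-\xi'}_\infty=\frac{1}{b-a}\norm{\gamma-\gamma'}_\infty$ and $\norm{\Phi(\xi)-\Phi(\xi')}_\infty=\frac{1}{b-a}\norm{F(\gamma)-F(\gamma')}_\infty$, while $\Phi'(\xi)^{-1}=-F'(\gamma)^{-1}$ yields the bound on $\norm{F'(\gamma)^{-1}}_\infty$. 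This gives (b) and (c) on $r(\overline O)=I$. For (d), the first claim is anti-monotonicity of $F$, and the existence/convergence part follows from $\Phi(0)=\frac{1}{b-a}(F(b)-y)\leq0\leq\frac{1}{b-a}(F(a)-y)=\Phi(1)$ together with local Lipschitz continuity of $\Phi'$: Theorem~\ref{thm:tight_Newton}(b) produces a unique zero $\hat\xi\in\overline O$ of $\Phi$ and a quadratically convergent Newton sequence started at $\xi^{(0)}=\1$ that stays in $O$. Setting $\hat\gamma:=r(\hat\xi)\in I$ and using the affine invariance of Newton's method (so that $\gamma^{(i)}:=r(\xi^{(i)})$ satisfies the stated $F$-iteration with $\gamma^{(0)}=r(\1)=a$) transfers everything to $F$.

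The main obstacle is not any single deep step but the careful bookkeeping of the affine rescaling: I must check that the shift $d^{(j)}$, the sampling points $z^{(j,k)}$, and the number $K$ of Theorem~\ref{thm:tight_Newton} are exactly matched by the definitions of $\beta$, $\kappa^{(j,k)}$ and $K$ in Theorem~\ref{theorem:choose_meas_general}, and that the anti-monotone map $r$ reverses the order of the interval endpoints correctly, so that $r(\overline O)=I$ and $\Phi(0)\leq0\leq\Phi(1)$ come out with the right signs.
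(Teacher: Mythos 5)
Your proposal is correct and follows essentially the same route as the paper's proof: define $r$ and $\Phi$ via lemma~\ref{lemma:F_rescaled}, verify $r(z^{(j,k)})=\kappa^{(j,k)}$, $d^{(j)}=\frac{1}{2}e_j-\beta e_j'$ and $e_j^T\Phi'(z^{(j,k)})d^{(j)}=\lambda_{j,k}>0$, then invoke Theorem~\ref{thm:tight_Newton} and transfer back through the affine map. You additionally spell out two points the paper leaves implicit — that the theorem's constant (defined with $\max_l$) is bounded by the lemma's $L=(\min_{j,k}\lambda_{j,k})^{-1}$, and the explicit cancellation of the $b-a$ factors under the rescaling — both of which are handled correctly.
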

\begin{proof}
We define $r$ and $\Phi$ as in lemma~\ref{lemma:F_rescaled}.
Then $r([0,1]^n)=[a,b]^n$ and 
\[
\textstyle
r([-\frac{1+\epsilon}{cn},1+\epsilon]^n)=[a-(b-a)\epsilon, b+(b-a)\frac{1+\epsilon}{cn}]^n.
\]

Lemma~\ref{lemma:F_rescaled} yields assertion (a) and that $\Phi:\ U\subseteq \R^n\to \R^n$ is a continuously differentiable pointwise convex and monotonic function
on $U=(-\infty,\textstyle \frac{b}{b-a})^n$, with locally Lipschitz continuous $\Phi'$, and $\Phi(0)\leq 0\leq \Phi(1)$.
Also, $U$ contains $[-\frac{1+\epsilon}{cn}-\frac{\epsilon}{2cn},1+2\epsilon]^n$ since $\epsilon<\frac{a}{2(b-a)}$.

Moreover, with $z^{(j,k)},d^{(j)}\in \R^n$ defined by \eqref{eq:Newton_tight_def_z} and \eqref{eq:Newton_tight_def_d},
%z^{(j,k)}&:=\left( -\frac{1+\epsilon}{cn}+(k-2)\frac{\epsilon}{2cn} \right) e_j + \left( 1+2\epsilon  \right)e_j'\\
%d^{(j)}&:=\frac{1}{2} e_j  - \frac{1+\epsilon+cn+2\epsilon cn}{\epsilon} e_j'
we have that 
\[
\kappa^{(j,k)}=r(z^{(j,k)}) \quad \text{ and } \quad d^{(j)}=\frac{1}{2}e_j-\beta e_j'.
\]
It thus
follows from lemma~\ref{lemma:F_rescaled} and \eqref{eq:lemma_choose_meas_general_gj} that
\begin{align*}
e_j^T \Phi'(z^{(j,k)})d^{(j)}
=\frac{1}{2}\int_{\Gamma_j}  |u_{\kappa^{(j,k)}}^{g_j}|^2  \dx[s]
- \beta \int_{\Gamma\setminus \Gamma_j} |u_{\kappa^{(j,k)}}^{g_j}|^2  \dx[s]=\lambda_{j,k}>0,
\end{align*}
so that $\Phi$ fulfills the assumptions of Theorem~\ref{thm:tight_Newton} which then yields the assertions (b)--(d).
\hfill $\Box$
\end{proof}

To prove Theorem~\ref{theorem:choose_meas_general}(b), we need to ensure that 
there exist Neumann data $g_j\in L^2(\partial \Omega)$ so that the corresponding solutions 
$u_{\kappa^{(j,k)}}^{g_j}$ are much larger on $\Gamma_j$ than on $\Gamma\setminus \Gamma_j$,
and this property has to hold for several Robin transmission coefficients $\kappa^{(j,k)}$, $k=1,\ldots,K$, simultaneously.

Note that for fixed $j\in \{1,\ldots,n\}$ the Robin transmission coefficients $\kappa^{(j,k)}$ only differ on $\Gamma_j$.
Hence, the following lemma will allow us to estimate $u_{\kappa^{(j,k)}}^{g_j}$ on $\Gamma_j$
by $u_{\kappa^{(j,1)}}^{g_j}$.

\begin{lemma}\label{lemma:norm_control}
Let $\gamma^{(1)},\gamma^{(2)}\in L^\infty_+(\Gamma)$ with $\gamma^{(1)}=\gamma^{(2)}$ on $\Gamma\setminus \Gamma_0$,
where $\Gamma_0$ is a measurable subset of $\Gamma$ with positive measure. Then, for all $g\in L^2(\partial \Omega)$, the corresponding solutions
$u_1,u_2\in H^1(\Omega)$ of \eqref{eq:Robin1}--\eqref{eq:Robin4} with $\gamma=\gamma^{(1)}$, and $\gamma=\gamma^{(2)}$, respectively, fulfill
\begin{align*}
%\left( 1+\frac{\norm{\gamma^{(1)}-\gamma^{(2)}}_{L^\infty(\Gamma)}}{\inf (\gamma^{(1)})} \right)^{-1} & \norm{u_1}_{L^2(\Gamma_0)}\\
%\leq &
\norm{u_2}_{L^2(\Gamma_0)}
\leq \left( 1+\frac{\norm{\gamma^{(1)}-\gamma^{(2)}}_{L^\infty(\Gamma_0)}}{\inf (\gamma^{(2)}|_{\Gamma_0})}  \right) \norm{u_1}_{L^2(\Gamma_0)}.
\end{align*} 
\end{lemma}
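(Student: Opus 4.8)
The plan is to control the difference $v:=u_1-u_2$ of the two solutions by an energy estimate and then to invoke the triangle inequality. First I would write the weak formulation \eqref{Robin_variational} for $u_1$ (coefficient $\gamma^{(1)}$) and for $u_2$ (coefficient $\gamma^{(2)}$), both driven by the \emph{same} Neumann datum $g$, and subtract the two identities. The crucial algebraic step is to split the resulting interface term as
\[
\gamma^{(1)} u_1-\gamma^{(2)}u_2=\gamma^{(2)} v+(\gamma^{(1)}-\gamma^{(2)})u_1,
\]
so that $v$ satisfies a Robin transmission problem \emph{with the coefficient} $\gamma^{(2)}$ and a source proportional to $u_1$. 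Since $\gamma^{(1)}=\gamma^{(2)}$ on $\Gamma\setminus\Gamma_0$ by hypothesis, this source is supported on $\Gamma_0$, giving
\[
\int_\Omega \nabla v\cdot\nabla w\dx+\int_\Gamma \gamma^{(2)} v\,w\dx[s]=-\int_{\Gamma_0}(\gamma^{(1)}-\gamma^{(2)})u_1 w\dx[s]
\]
for all $w\in H^1(\Omega)$.

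Next I would test with $w=v$. The left-hand side becomes $\int_\Omega|\nabla v|^2\dx+\int_\Gamma \gamma^{(2)}v^2\dx[s]$, and both terms are non-negative because $\gamma^{(2)}\in L^\infty_+(\Gamma)$. Discarding the Dirichlet energy, bounding the boundary term from below by $\inf(\gamma^{(2)}|_{\Gamma_0})\,\norm{v}_{L^2(\Gamma_0)}^2$, and estimating the right-hand side by the Cauchy--Schwarz inequality together with the $L^\infty(\Gamma_0)$-bound on $\gamma^{(1)}-\gamma^{(2)}$ yields
\[
\inf(\gamma^{(2)}|_{\Gamma_0})\,\norm{v}_{L^2(\Gamma_0)}^2\leq \norm{\gamma^{(1)}-\gamma^{(2)}}_{L^\infty(\Gamma_0)}\,\norm{u_1}_{L^2(\Gamma_0)}\,\norm{v}_{L^2(\Gamma_0)}.
\]
Dividing by $\norm{v}_{L^2(\Gamma_0)}$ (the case $v=0$ on $\Gamma_0$ being trivial) furnishes the estimate $\norm{v}_{L^2(\Gamma_0)}\leq \frac{\norm{\gamma^{(1)}-\gamma^{(2)}}_{L^\infty(\Gamma_0)}}{\inf(\gamma^{(2)}|_{\Gamma_0})}\norm{u_1}_{L^2(\Gamma_0)}$.

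Finally, the triangle inequality $\norm{u_2}_{L^2(\Gamma_0)}\leq\norm{u_1}_{L^2(\Gamma_0)}+\norm{v}_{L^2(\Gamma_0)}$ combined with the previous line gives exactly the claimed bound. In my view the only genuine obstacle is the choice of splitting in the first step: one must peel off $\gamma^{(2)}v$ rather than $\gamma^{(1)}v$, so that the coercivity constant appearing on the left is precisely $\inf(\gamma^{(2)}|_{\Gamma_0})$ while the source on the right is proportional to the controlled solution $u_1$ rather than to the $u_2$ we are trying to bound; the opposite decomposition would require an absorption argument and produce a different constant. Everything else is a routine energy estimate, and the well-posedness of all the boundary value problems involved is guaranteed by Lax--Milgram as in \eqref{Robin_variational}.
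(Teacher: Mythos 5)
Your proof is correct and follows essentially the same route as the paper: both arguments bound the $\gamma^{(2)}$-energy of the difference $u_2-u_1$ from below by $\inf(\gamma^{(2)}|_{\Gamma_0})\norm{u_2-u_1}_{L^2(\Gamma_0)}^2$, use the two weak formulations to reduce the energy to the term $\int_{\Gamma_0}(\gamma^{(1)}-\gamma^{(2)})u_1(u_2-u_1)\dx[s]$, apply Cauchy--Schwarz, and finish with the triangle inequality. The only difference is organizational (you first write down the variational problem solved by $v$, while the paper manipulates the energy of the difference directly), and your observation about which coefficient to ``peel off'' is precisely the point that makes the paper's computation work.
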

\begin{proof}
We proceed analogously to \cite[Lemma~3.6]{harrach2019fractional_I}. It follows from the variational formulation \eqref{Robin_variational} that 
\begin{align*}
\lefteqn{\inf (\gamma^{(2)}|_{\Gamma_0}) \norm{u_2-u_1}_{L^2(\Gamma_0)}^2}\\
&\leq \int_\Omega\sigma |\nabla (u_2-u_1)|^2 \dx+\int_\Gamma \gamma^{(2)} |u_2-u_1|^2 \dx[s]\\
&=\int_{\partial\Omega}g(u_2-u_1)\dx[s]  - \int_\Omega\sigma \nabla u_1 \cdot \nabla (u_2-u_1) \dx-\int_\Gamma \gamma^{(2)} u_1 (u_2-u_1) \dx[s]\\
%&=\int_\Omega\sigma \nabla u_1 \cdot \nabla (u_2-u_1) \dx + \int_\Gamma \gamma^{(1)} u_1 (u_2-u_1) \dx[s]
% - \int_\Omega\sigma \nabla u_1 \cdot \nabla (u_2-u_1) \dx-\int_\Gamma \gamma^{(2)} u_1 (u_2-u_1) \dx[s]\\
&= \int_\Gamma (\gamma^{(1)}-\gamma^{(2)}) u_1 (u_2-u_1) \dx[s]= \int_{\Gamma_0} (\gamma^{(1)}-\gamma^{(2)}) u_1 (u_2-u_1) \dx[s]\\
&\leq \norm{\gamma^{(1)}-\gamma^{(2)}}_{L^\infty(\Gamma_0)} \norm{u_1}_{L^2(\Gamma_0)}  \norm{u_2-u_1}_{L^2(\Gamma_0)}.
\end{align*}
Hence, we obtain
\begin{align*}
\norm{u_2}_{L^2(\Gamma_0)}-\norm{u_1}_{L^2(\Gamma_0)}
&\leq \norm{u_2-u_1}_{L^2(\Gamma_0)}\\
&\leq \frac{\norm{\gamma^{(1)}-\gamma^{(2)}}_{L^\infty(\Gamma_0)}}{\inf (\gamma^{(2)}|_{\Gamma_0})}  \norm{u_1}_{L^2(\Gamma_0)},
\end{align*}
and the assertion follows.\hfill $\Box$
\end{proof}

The next lemma will allow us to construct $u_{\kappa^{(j,k)}}^{g_j}$ for which $u_{\kappa^{(j,1)}}^{g_j}$ is large on $\Gamma_j$ (and thus by lemma~\ref{lemma:sim_loc_pot} all $u_{\kappa^{(j,k)}}^{g_j}$ are large on $\Gamma_j$), and at the same time
all $u_{\kappa^{(j,k)}}^{g_j}$ are small on $\Gamma\setminus \Gamma_j$.

\begin{lemma}\label{lemma:sim_loc_pot}
Let $\Gamma_0$ be a measurable subset of $\Gamma$ with positive measure, $K\in \N$, and $\gamma^{(1)},\gamma^{(2)},\ldots,\gamma^{(K)}\in L^\infty_+(\Gamma)$.

Then, for all $C>0$, there exists $g\in L^2(\partial \Omega)$, so that the corresponding solutions 
$u_1,u_2,\ldots,u_K\in H^1(\Omega)$ of \eqref{eq:Robin1}--\eqref{eq:Robin4} fulfill
\[
\int_{\Gamma_0} |u_1|^2 \dx[s]\geq C \quad \text{ and } \quad \sum_{k=1}^K \int_{\Gamma\setminus \Gamma_0} |u_k|^2 \dx[s]\leq \frac{1}{C}.
\]
\end{lemma}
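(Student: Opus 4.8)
The plan is to construct a single Neumann datum $g$ that localizes the energy of $u_1$ on $\Gamma_0$ while simultaneously keeping the energies of \emph{all} the solutions $u_1,\ldots,u_K$ small on $\Gamma\setminus\Gamma_0$. The key structural observation is that the quantity to be controlled can be written as a pair of quadratic forms in $g$, and that the claim is equivalent to a statement about the relative ranges (closures of ranges) of the associated measurement operators. First I would introduce, for each $k$, the bounded linear ''virtual measurement'' operator $L_k\colon L^2(\partial\Omega)\to L^2(\Gamma)$ that maps $g\mapsto u_k|_\Gamma$, together with the restriction maps to $\Gamma_0$ and to $\Gamma\setminus\Gamma_0$. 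In this language the two desired inequalities read $\norm{u_1|_{\Gamma_0}}_{L^2(\Gamma_0)}^2\geq C$ and $\sum_{k=1}^K\norm{u_k|_{\Gamma\setminus\Gamma_0}}_{L^2(\Gamma\setminus\Gamma_0)}^2\leq 1/C$, i.e.\ one form is to be made arbitrarily large while a finite sum of the others stays bounded.

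The central tool is the localized potentials technique of Gebauer \cite{gebauer2008localized}, which is precisely a functional-analytic range/duality argument: unboundedness of $g\mapsto \norm{u|_{\Gamma_0}}$ along a sequence on which a competing norm stays bounded is equivalent to a non-inclusion of ranges of the adjoint operators, and this non-inclusion follows from a unique continuation argument for the Robin transmission problem. The step I expect to be the main obstacle is the \emph{simultaneous} control over the $K$ distinct coefficients $\gamma^{(1)},\ldots,\gamma^{(K)}$: a naive localized-potentials statement only handles a single solution operator. The clean way around this is to stack the operators, i.e.\ to work with the single operator
\[
\Phi\colon L^2(\partial\Omega)\to \bigoplus_{k=1}^K L^2(\Gamma\setminus\Gamma_0), \qquad g\mapsto (u_1|_{\Gamma\setminus\Gamma_0},\ldots,u_K|_{\Gamma\setminus\Gamma_0}),
\]
whose squared norm is exactly the finite sum $\sum_{k=1}^K\norm{u_k|_{\Gamma\setminus\Gamma_0}}^2$. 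Then the problem reduces to the standard two-operator localized-potentials dichotomy for the pair $(g\mapsto u_1|_{\Gamma_0},\ \Phi)$, and one only needs that the relevant range non-inclusion still holds.

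Concretely, I would argue by the contraposition/duality form of the localized potentials lemma. Suppose, for contradiction, that no such $g$ exists; then there is a constant bounding $\norm{u_1|_{\Gamma_0}}^2$ by a multiple of $\sum_{k}\norm{u_k|_{\Gamma\setminus\Gamma_0}}^2$ plus $\norm{g}^2$ terms, which by a standard functional-analytic lemma (as in \cite{gebauer2008localized}) forces an inclusion of the range of the adjoint of $g\mapsto u_1|_{\Gamma_0}$ into the range of $\Phi^*$. I would then derive a contradiction from unique continuation: a nonzero source supported on $\Gamma_0$ produces a harmonic field that cannot simultaneously be generated (up to the transmission jump conditions \eqref{eq:Robin3}--\eqref{eq:Robin4}) by sources supported away from $\Gamma_0$, because the complement $\Omega\setminus\overline D$ is connected and the solutions of \eqref{eq:Robin1}--\eqref{eq:Robin4} satisfy the unique continuation property. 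This is exactly the mechanism already used in \cite[Lemma~4.3]{harrach2019global} for a single coefficient; the only new ingredient here is that the competing norm is a finite direct sum, which the argument accommodates verbatim since finite direct sums of operators with closed-range/unique-continuation properties behave like a single operator. Finally, rescaling the resulting $g$ by a suitable positive factor turns the large lower bound and small upper bound into exactly $\geq C$ and $\leq 1/C$, completing the proof.
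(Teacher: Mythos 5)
Your functional-analytic framework is the same as the paper's: stack the restriction maps into a single operator, use the duality between adjoint norm estimates and range inclusions (\cite[Lemma~2.5]{gebauer2008localized}), and reduce the claim to the range non-inclusion $\range(A_0)\not\subseteq \range(A_1)+\dots+\range(A_K)$, where $A_0$ maps sources $f_0$ supported on $\Gamma_0$ (for the coefficient $\gamma^{(1)}$) and $A_k$ maps sources $f_k$ supported on $\Gamma\setminus\Gamma_0$ (for the coefficient $\gamma^{(k)}$) to the resulting Dirichlet data on $\partial\Omega$. Up to this point your proposal matches the paper's proof.

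The gap is in the final step, where you assert that the unique-continuation contradiction from the single-coefficient case (\cite[Lemma~4.3]{harrach2019global}) carries over ``verbatim'' because a finite direct sum of operators behaves like a single operator. It does not, and this is precisely the new difficulty of the simultaneous statement. Assume the range inclusion holds and write $A_0f_0=A_1f_1+\dots+A_Kf_K$; unique continuation (zero Cauchy data on $\partial\Omega$, connected complement $\Omega\setminus\overline D$) does give $v:=v_1+\dots+v_K-v_0=0$ in all of $\Omega$. But since the $v_k$ solve problems with \emph{different} coefficients $\gamma^{(k)}$, subtracting the variational formulations leaves residual terms $(\gamma^{(1)}-\gamma^{(k)})v_k$ on $\Gamma$, and $v=0$ only yields
\[
f_0=\sum_{k=2}^K\bigl(\gamma^{(1)}-\gamma^{(k)}\bigr)\,v_k|_{\Gamma_0},
\]
which is no contradiction at all: sources on $\Gamma\setminus\Gamma_0$ \emph{can} reproduce a source on $\Gamma_0$ through these coefficient-difference terms, so unique continuation alone cannot rule out the inclusion. (In the single-coefficient case this sum is empty and one gets $f_0=0$ for arbitrary $f_0$, which is why the old argument worked there.) The paper closes this gap with an additional compactness argument: if the inclusion held, the identity above would hold for \emph{every} $f_0\in L^2(\Gamma_0)$, so the closed infinite-dimensional space $L^2(\Gamma_0)$ would be contained in the range of operators factoring through the compact trace operator $\mathrm{tr}_{\Gamma_0}:\ H^1(\Omega)\to L^2(\Gamma_0)$, i.e.\ in the range of a compact operator, which is impossible (\cite[Thm.~4.18]{rudin1991functional}). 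Without this (or an equivalent) extra ingredient, your proof does not go through.
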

\begin{proof}
The existence of simultaneously localized potentials for the fractional Schr\"o\-ding\-er equation has recently been shown
in \cite[Theorem 3.11]{harrach2020monotonicity}, and we proceed similarly in this proof. Following the original 
localized potentials approach in \cite{gebauer2008localized}, we start by reformulating the assertion as operator range
\text{(non-)inclusions}, by introducing the operators
\begin{align*}
A_0:\ L^2(\Gamma_0)\to L^2(\partial \Omega), \quad f\mapsto Af:=v_0|_{\partial \Omega},\\
A_k:\ L^2(\Gamma\setminus \Gamma_0)\to L^2(\partial \Omega), \quad f\mapsto Af:=v_k|_{\partial \Omega},
\end{align*}
where $k\in \{1,\ldots,K\}$, $v_0\in H^1(\Omega)$ solves
\begin{equation}\label{eq:lemma:sim_pot_v_0}
\int_\Omega \sigma \nabla v_0\cdot \nabla w \dx + \int_{\Gamma} \gamma^{(1)} v_0 w\dx[s]
= \int_{\Gamma_0} f w \dx[s] \quad \text{ for all } w\in H^1(\Omega),
\end{equation}
and $v_k\in H^1(\Omega)$ solves
\begin{equation}\label{eq:lemma:sim_pot_v_k}
\int_\Omega \sigma \nabla v_k\cdot \nabla w \dx + \int_{\Gamma} \gamma^{(k)} v_k w\dx[s]
= \int_{\Gamma\setminus \Gamma_0} f w \dx[s] \quad \text{ for all } w\in H^1(\Omega).
\end{equation}
It is easily shown (see, e.g., the proof of \cite[Theorem 3.1]{harrach2019global}) that the adjoints of these operators 
are given by
\begin{alignat*}{2}
A_0^*:&\ L^2(\partial \Omega)\to L^2(\Gamma_0), & \quad g&\mapsto u_1|_{\Gamma_0},\\
A_k^*:&\ L^2(\partial \Omega)\to L^2(\Gamma\setminus \Gamma_0),& \quad g&\mapsto u_k|_{\Gamma\setminus \Gamma_0},
\end{alignat*}
where $u_1,\ldots,u_K\in H^1(\Omega)$ solve \eqref{eq:Robin1}--\eqref{eq:Robin4} with Neumann boundary data $g$ and
Robin transmission coefficients $\gamma^{(1)},\ldots,\gamma^{(K)}$, respectively.

%Proof: For all $g\in L^2(\partial \Omega)$ it holds that
%\begin{align*}
%\int_{\Gamma_0} f (A_0^* g)\dx[s]
%&= \int_{\partial\Omega} (A_0 f) g\dx[s] = \int_{\partial\Omega} v_0 g\dx[s]\\
%&= \int_\Omega \sigma \nabla u_1\cdot \nabla v_0\dx + \int_\Gamma \gamma^{(1)} u_1 v_0 \dx[s]
%= \int_{\Gamma_0} f u_1 \dx[s]
%\quad \forall f\in L^2(\Gamma_0)\\
%%
%\int_{\Gamma\setminus \Gamma_0} f (A_k^* g)\dx[s]
%&= \int_{\partial\Omega} (A_k f) g\dx[s] = \int_{\partial\Omega} v_k g\dx[s]\\
%&= \int_\Omega \sigma \nabla u_k\cdot \nabla v_k\dx + \int_\Gamma \gamma^{(k)} u_k v_k \dx[s]
%= \int_{\Gamma_0} f u_k \dx[s]
%\ \forall f\in L^2(\Gamma\setminus \Gamma_0).
%\end{align*}

By a simple normalization argument, the assertion is now equivalent to showing that 
\begin{equation}\label{eq:lemma:sim_pot_notexists}
\not\exists C>0:\ 
\norm{A_0^* g}^2 \leq C \sum_{k=1}^K \norm{A_k^* g}^2 =C \left\| \begin{pmatrix} A_1^*\\ \vdots \\ A_K^* \end{pmatrix} g \right\|^2 \quad \forall g\in L^2(\partial \Omega).
\end{equation}
Using a functional analytic relation between operator ranges and the norms or their adjoints (cf., \cite[Lemma 2.5]{gebauer2008localized}, \cite[Cor.~3.5]{fruhauf2007detecting}), the property \eqref{eq:lemma:sim_pot_notexists} (and thus the assertion) is proven if we can show that
\begin{equation}\label{eq:range_non_inclusion}
\range (A_0)\not\subseteq \range \begin{pmatrix} A_1 & \ldots & A_K \end{pmatrix}= \range (A_1)+\ldots+\range(A_K). 
\end{equation}
We prove \eqref{eq:range_non_inclusion} by contradiction, and assume that 
\[
\range (A_0)\subseteq \range (A_1)+\ldots+\range(A_K). 
\]
Then, for every $f_0\in L^2(\Gamma_0)$, there exist $f_1,\ldots,f_K\in L^2(\Gamma\setminus \Gamma_0)$, so that
\[
A_0 f_0= A_1 f_1+\ldots + A_K f_K.
\]
Let $v_0,\ldots,v_K\in H^1(\Omega)$ be the associated solutions from the definition of $A_0,\ldots,A_K$ (with $f=f_k$), and 
set $v:=v_1+\ldots+v_K-v_0$. Then $v|_{\partial \Omega}=0$, and $\partial_\nu v|_{\partial \Omega}=0$, so that by unique continuation 
$v=0$ in $\Omega\setminus \overline D$. But this also yields that $v|_{\Gamma}=0$, and from this we obtain 
that $v=0$ in $D$, so that $v=0$ in all of $\Omega$.

Hence, using \eqref{eq:lemma:sim_pot_v_0} and \eqref{eq:lemma:sim_pot_v_k}, we obtain for all $w\in H^1(\Omega)$,
\begin{align*}
0&=\int_\Omega \sigma \nabla v\cdot \nabla w \dx + \int_{\Gamma} \gamma^{(1)} v w\dx[s]\\
%&=\sum_{k=1}^K \left( \int_\Omega \sigma \nabla v_k\cdot \nabla w \dx + \int_{\Gamma} \gamma^{(1)} v_k w\dx[s]\right)
%- \int_\Omega \sigma \nabla v_0\cdot \nabla w \dx - \int_{\Gamma} \gamma^{(1)} v_0 w\dx[s]\\
&=\sum_{k=1}^K \left( \int_\Omega \sigma \nabla v_k\cdot \nabla w \dx + \int_{\Gamma} \gamma^{(k)} v_k w\dx[s]
+ \int_{\Gamma} (\gamma^{(1)}-\gamma^{(k)}) v_k w\dx[s]\right)\\
&\qquad {} - \int_\Omega \sigma \nabla v_0\cdot \nabla w \dx - \int_{\Gamma} \gamma^{(1)} v_0 w\dx[s]\\
&=\sum_{k=1}^K \left( \int_{\Gamma\setminus\Gamma_0} f_k w\dx[s] + \int_{\Gamma} (\gamma^{(1)}-\gamma^{(k)}) v_k w\dx[s]\right)
- \int_{\Gamma_0} f_0 w\dx[s],
\end{align*}
and this shows that %$f_1+\ldots+f_K=0$ (not needed?), and that
\[
f_0=\sum_{k=2}^K (\gamma^{(1)}-\gamma^{(k)}) v_k|_{\Gamma_0}.
\]
However, since this holds for all $f_0\in L^2(\Gamma_0)$, 
this would imply that
\[
L^2(\Gamma_0)=\range( \mathcal{M}_2 \mathrm{tr}_{\Gamma_0} )+\ldots+\range( \mathcal{M}_K \mathrm{tr}_{\Gamma_0} )
= \range \left( \begin{pmatrix} \mathcal{M}_2 \mathrm{tr}_{\Gamma_0} & \ldots & \mathcal{M}_K \mathrm{tr}_{\Gamma_0} \end{pmatrix}\right) 
%  \mathcal{M}_{\gamma^{(1)}-\gamma^{(k)}} \mathrm{tr}_{\Gamma_0} (H^1(\Omega))
\]
where 
\[
\mathrm{tr}_{\Gamma_0}:\ H^1(\Omega)\to L^2(\Gamma_0), \quad \text{ and } \quad
\mathcal{M}_k:\ L^2(\Gamma_0)\to L^2(\Gamma_0)
\]
are the compact trace operator and the continuous multiplication operator by $\gamma^{(1)}-\gamma^{(k)}$. Hence, the closed infinite-dimensional space $L^2(\Gamma_0)$ would be
the range of a compact operator, which is not possible cf., e.g., \cite[Thm.~4.18]{rudin1991functional}.
This contradiction shows that \eqref{eq:range_non_inclusion} must hold, and thus the assertion is proven.
\hfill $\Box$
\end{proof}

\begin{proof}[of Theorem \ref{theorem:choose_meas_general}]
\begin{enumerate}[(a)]
\item follows from lemma~\ref{lemma:F_prop_meas_general}.
\item Let $j\in \{1,\ldots,n\}$.
As in the proof of Theorem \ref{theorem:choose_meas_specific}(b), it follows from the simultaneously localized potentials result in 
lemma~\ref{lemma:sim_loc_pot}, and a density argument, that $M_m^{(j)}\in \R^{m\times m}$ 
will have a positive eigenvalue if the dimension $m\in \N$ is sufficiently large. Hence, 
for sufficiently large $m$, an eigenvector $v^{(j)}\in \R^m$ corresponding to a largest eigenvalue of $M_m^{(j)}$
will fulfill (with $g_j:=\sum_{l=1}^m v^{(j)}_l f_l\in L^2(\partial \Omega)$)
\begin{align*}
0 \leq (v^{(j)})^T M_m^{(j)} v^{(j)}
& =\frac{\alpha}{2}  \int_{\Gamma_j}  |u_{\kappa^{(j,1)}}^{g_j}|^2  \dx[s]
 - \beta \sum_{k=1}^K  \int_{\Gamma\setminus \Gamma_j} |u_{\kappa^{(j,k)}}^{g_j}|^2  \dx[s]. 
\end{align*}

Using that for all $k\in \{1,\ldots,K\}$
\begin{align*}
\inf (\kappa^{(j,1)}|_{\Gamma_j})&= % b- (b-a)\left( -\frac{1+\epsilon}{cn}+(k-2)\frac{\epsilon}{2cn} \right)|_{k=1}\\ &=
b+ (b-a)\left( \frac{2+3\epsilon}{2cn}\right)\geq b,\\
%K&=\operatorname{ceil}\left( \frac{2cn}{\epsilon}\left(1+\epsilon + \frac{1+\epsilon}{cn}\right)\right)\leq \frac{2cn}{\epsilon}\left(1+\epsilon + \frac{1+\epsilon}{cn}\right)+1\\
\norm{\kappa^{(j,k)} - \kappa^{(j,1)}}_{L^\infty(\Gamma_j)},
% &= \left| - (b-a) (k-1)\frac{\epsilon}{2cn} \right|\\ &=  (b-a) (k-1)\frac{\epsilon}{2cn}\\
&\leq (b-a) (K-1)\frac{\epsilon}{2cn} \leq (b-a) \left(1+\epsilon + \frac{1+\epsilon}{cn}\right),
\end{align*}
we have that
\[
 1+ \frac{\norm{\kappa^{(j,k)} - \kappa^{(j,1)}}_{L^\infty(\Gamma_j)}}{\inf (\kappa^{(j,1)}|_{\Gamma_j})} 
 \leq 1+ \frac{b-a}{b} \left(1+\epsilon + \frac{1+\epsilon}{cn}\right)
=\frac{1}{\alpha},
\]
and thus it follows from lemma~\ref{lemma:norm_control}
\begin{align*}
\lefteqn{\frac{1}{2} \int_{\Gamma_j}  |u_{\kappa^{(j,k)}}^{g_j}|^2  \dx[s]
- \beta \int_{\Gamma\setminus \Gamma_j} |u_{\kappa^{(j,k)}}^{g_j}|^2  \dx[s]}\\
%& = \frac{\alpha}{2}  \left( 1+ \frac{\norm{\kappa^{(j,k)} - \kappa^{(j,1)}}_{L^\infty(\Gamma_j)}}{\inf (\kappa^{(j,1)}|_{\Gamma_j})} \right)
%\int_{\Gamma_j}  |u_{\kappa^{(j,k)}}^{g_j}|^2  \dx[s] - \beta \int_{\Gamma\setminus \Gamma_j} |u_{\kappa^{(j,k)}}^{g_j}|^2  \dx[s]\\
& \geq \frac{\alpha}{2}  \int_{\Gamma_j}  |u_{\kappa^{(j,1)}}^{g_j}|^2  \dx[s] - \beta  \sum_{k'=1}^K  \int_{\Gamma\setminus \Gamma_j} |u_{\kappa^{(j,k')}}^{g_j}|^2  \dx[s] \geq 0.
\end{align*}
Hence, \eqref{eq:lemma_choose_meas_general_gj} is fulfilled. 
\hfill $\Box$
\end{enumerate}
\end{proof}

\begin{remark}
Regarding the formulation of Theorem \ref{theorem:choose_meas_general}(b), note that we actually prove that the matrix
$M_m^{(j)}\in \R^{m\times m}$ has a positive eigenvalue if the dimension $m$ is sufficiently large, and that an eigenvector
corresponding to a positive eigenvalue leads to a boundary current $g_j$ that fulfills \eqref{eq:lemma_choose_meas_general_gj}.
But the estimates that we use in the proof of \ref{theorem:choose_meas_general}(b) are far from being sharp, so that it seems worth
checking \eqref{eq:lemma_choose_meas_general_gj} already for eigenvectors to a largest eigenvalue that is not yet positive.
\end{remark}

%%%%%%%%%%%%%%%%%%%%%%%%%%%%%%%%%%%%%% checked until here %%%%%%%%%%%%%%%%%%%%%%%%%%%%%%%%%%%%%%%%%%%%%

\subsection{Numerical results}\label{subsect:numerics}

We test our results on the simple example setting shown in figure \ref{fig:Setting}. $\Omega\subset \R^2$ is the two-dimensional unit circle,
and $D$ is a square with corner coordinates  $(0,-0.1)$, $(0,0.3)$, $(-0.4,0.3)$ and $(-0.4,-0.1)$. 
The boundary $\Gamma=\partial D$ is decomposed into $n=4$ parts $\Gamma_1,\ldots,\Gamma_4$ denoting (in this order) the
right, top, left, and bottom side of the square. We assume that the unknown true Robin transmission coefficient $\hat \gamma:\ \Gamma\to \R$ is a-priori known to be bounded by $a:=1$ and $b:=2$ and that it is known to be piecewise-constant with respect to the partition of $\Gamma$, i.e., 
\[
\hat \gamma(x)=\hat \gamma_1 \chi_{\Gamma_1} + \hat \gamma_2 \chi_{\Gamma_2} + \hat \gamma_3 \chi_{\Gamma_3} + \hat \gamma_4 \chi_{\Gamma_4}\quad \text{ with } \quad \hat \gamma_1,\ldots,\hat\gamma_4\in [a,b]=[1,2].
\]
Recall that, for the ease of notation, we  identify a piecewise-constant function $\gamma\in L^2(\Gamma)$ with the vector $(\gamma_1,\ldots,\gamma_4)\in \R^4$,
and simply write $a$ for the constant function $\gamma(x)=a$, and for the vector $(a,a,a,a)\in \R^4$ (and use $b$ analogously).

\begin{figure}
\begin{center}
\mbox{\includegraphics[height=0.25\textheight]{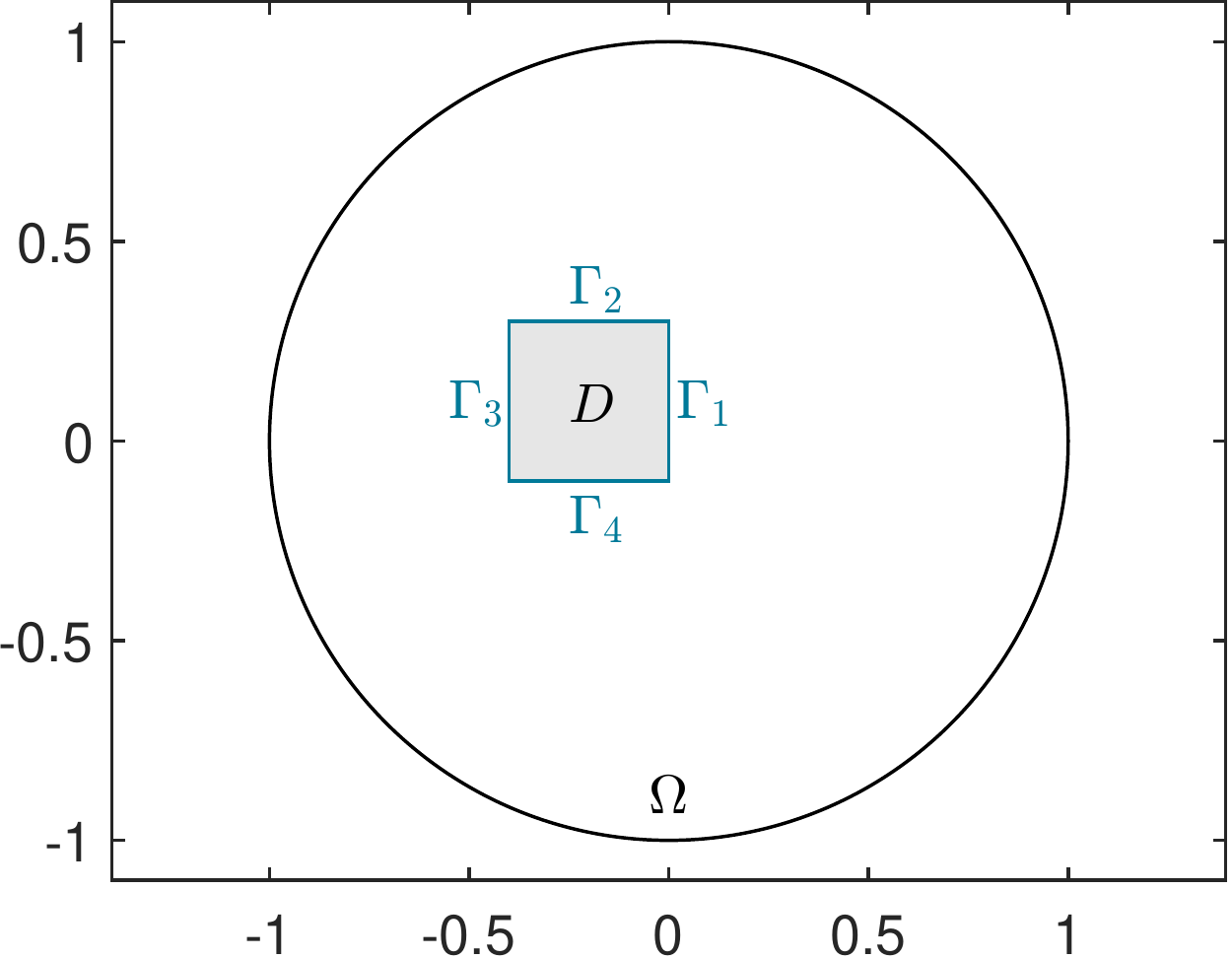}} 
\end{center}
\caption{Setting considered for the numerical results.}%(red dashed line) 
%as a function of $b$ using $a=-10$ (top left), $a=10$ (top right), $a=100$ (bottom left), and $a=200$ (bottom right).}
\label{fig:Setting}
\end{figure}

\subsubsection{Choosing the measurements}\label{subsect:measurements}

We first apply Theorem \ref{theorem:choose_meas_general} to construct $n=4$ Neumann boundary functions $g_1,\ldots, g_4\in L^2(\partial \Omega)$ so that
the four measurements 
\[
F(\gamma):=\left(\int_{\partial \Omega} g_j \Lambda(\gamma) g_j \dx[s]\right)_{j=1}^4\in \R^4
\text{ uniquely determine } \gamma\in [a,b]^4\subset \R^4,
\]
and the Newton method applied to $F$ globally converges.

To implement Theorem \ref{theorem:choose_meas_general},
we choose $\epsilon:=0.9\,\frac{a}{2(b-a)}$, which yields $K=173$, 
and $(f_m)_{m\in \N}\subset L^2(\partial \Omega)$ as the standard trigonometric polynomial basis functions 
\[
(1,\cos(\varphi),\sin(\varphi),\cos(2\varphi),\sin(2\varphi),\ldots)\subseteq L^2(\partial \Omega)
\]
with $\varphi$ denoting the angle of a point on the unit circle $\partial \Omega$. 
For each $j\in \{1,\ldots,n\}$, we then calculate the matrix $M_m^{(j)}\in \R^{m\times m}$ starting
with $m=1$, and increase the dimension $m\in \N$, until an eigenvector $v^{(j)}\in \R^m$ corresponding to a largest eigenvalue of this matrix has the property that 
\[
g_j:=\sum_{l=1}^m v^{(j)}_l f_l = v^{(j)}_1 + v^{(j)}_2  \cos(\varphi)+ v^{(j)}_3 \sin(\varphi) + v^{(j)}_4 \cos(2\varphi)+\ldots\in L^2(\partial \Omega)
\]
fulfills \eqref{eq:lemma_choose_meas_general_gj} for all $k=1,\ldots,K$.
To calculate the entries of $M_m^{(j)}$, and for checking \eqref{eq:lemma_choose_meas_general_gj}, 
the required solutions $u^{f_i}_{\kappa^{(j,k)}}$ of the Robin transmission problem \eqref{eq:Robin1}--\eqref{eq:Robin4} with Neumann boundary functions $f_i$, $i=1,\ldots,m$ and Robin transmission coefficients $\kappa^{(j,k)}$, as defined in \eqref{eq:def_kappa_jk}, were obtained using the commercial finite element software Comsol.

For our setting we had to increase the dimension up to at most $m=15$, so that all $g_j$
are trigonometric polynomials of order less or equal $7$. Figure \ref{fig:boundary_currents} shows the boundary functions $g_j$ plotted with respect to the boundary angle on the unit circle $\partial \Omega$.

\begin{figure}
\begin{center}
\begin{tabular}{c@{\ } c@{\ } c@{\ } c}
\mbox{\includegraphics[width=0.23\textwidth]{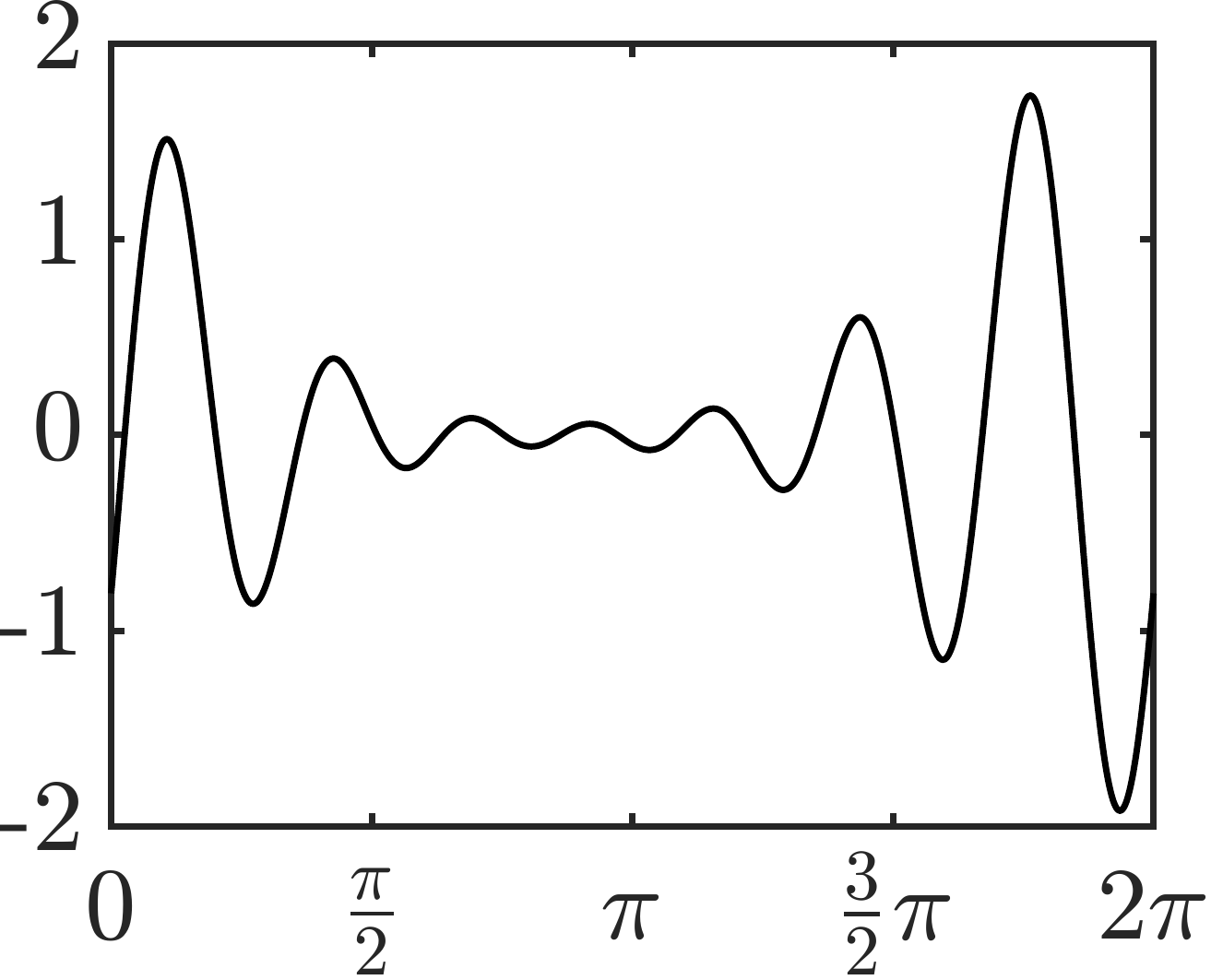}} &
\mbox{\includegraphics[width=0.23\textwidth]{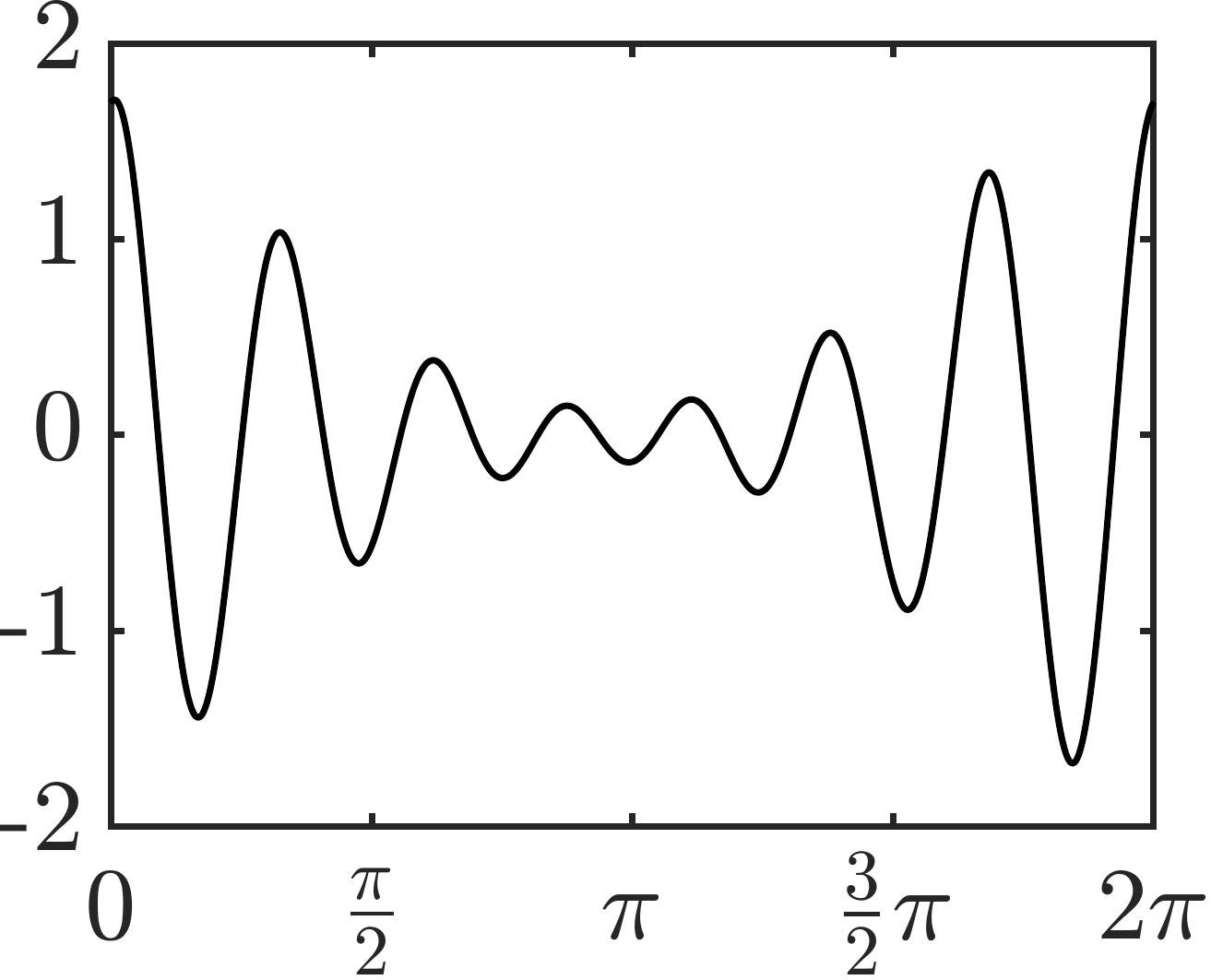}} &
\mbox{\includegraphics[width=0.23\textwidth]{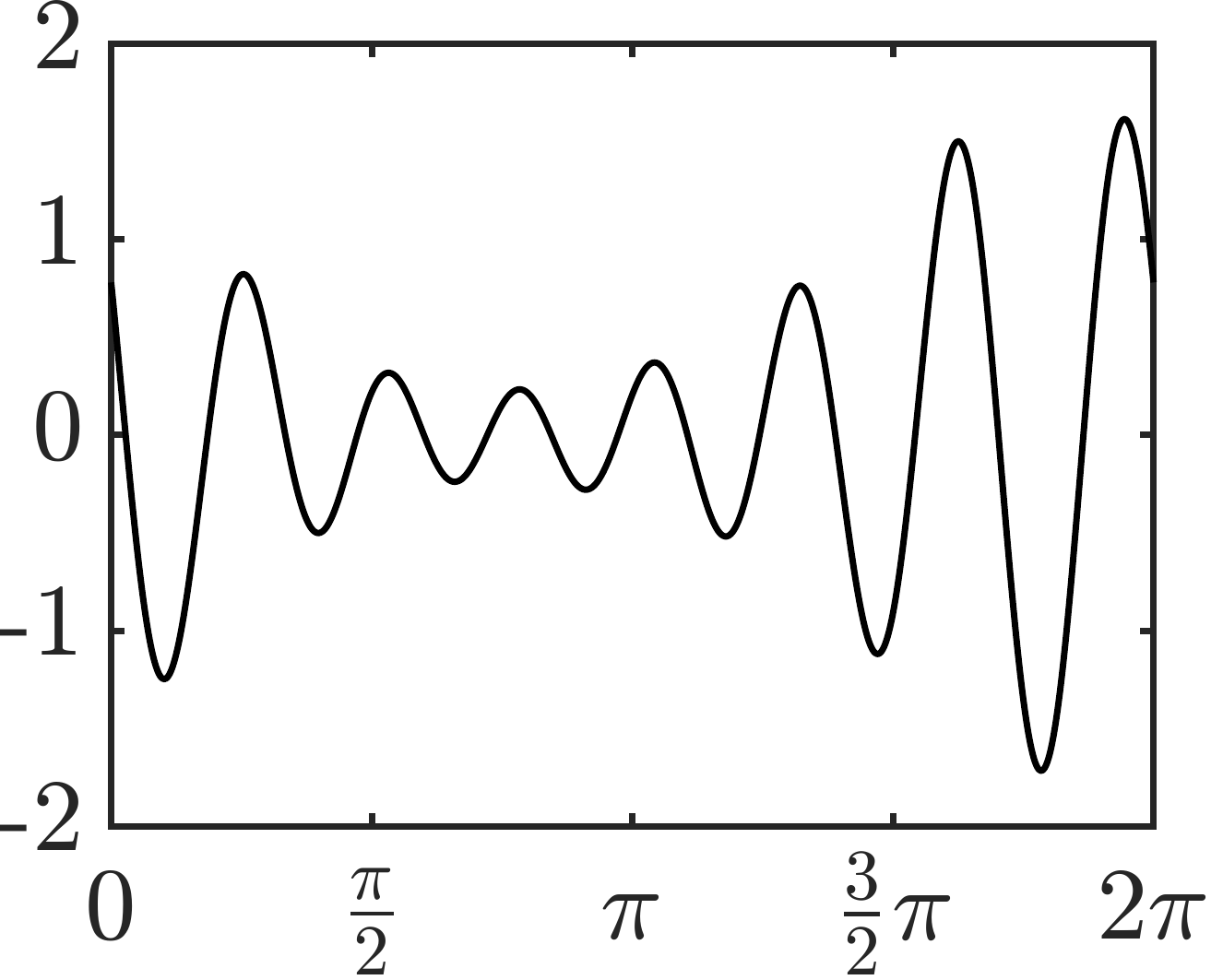}} &
\mbox{\includegraphics[width=0.23\textwidth]{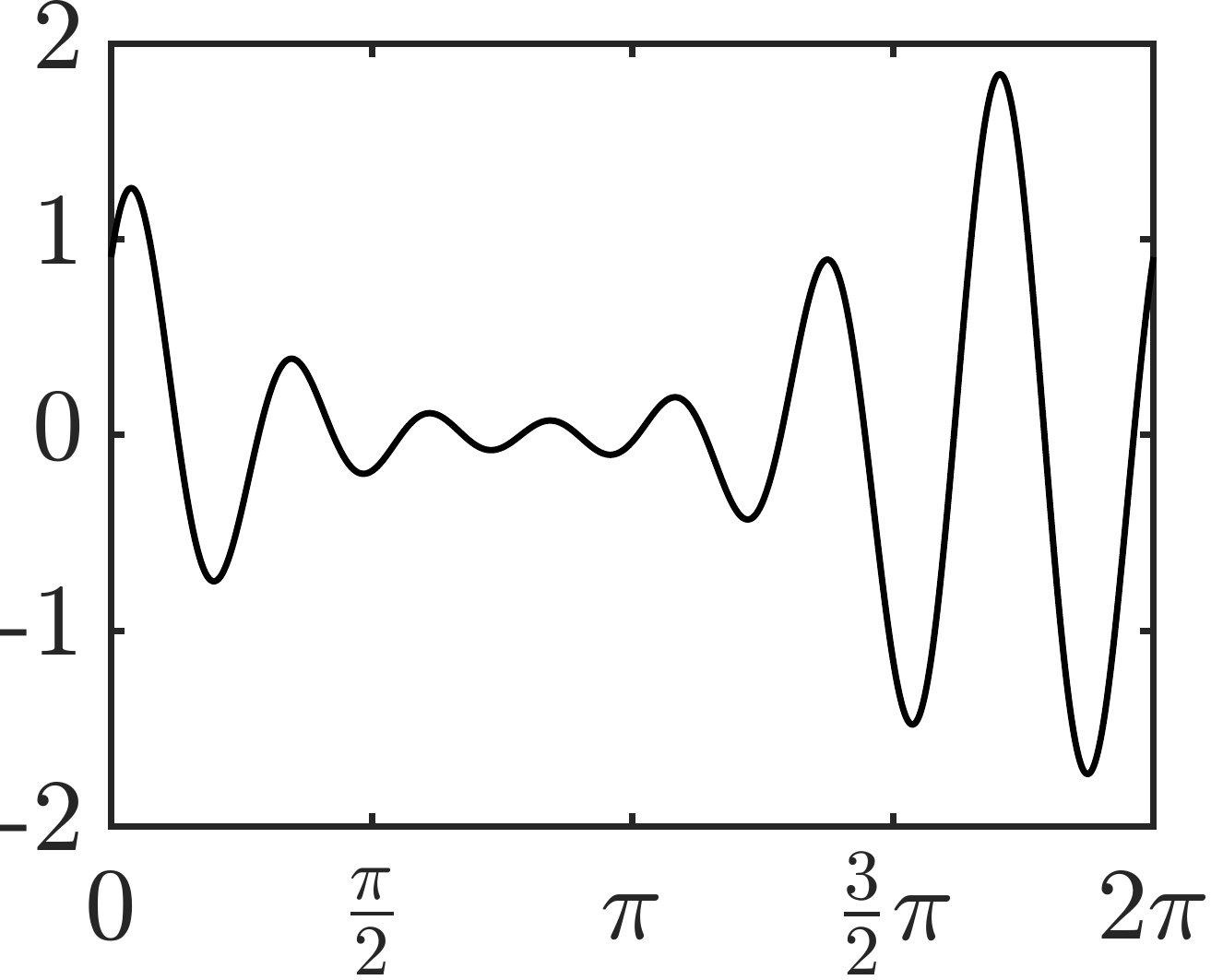}}
\end{tabular}
\end{center}
\caption{Four boundary currents $g_1,\ldots,g_4\in L^2(\partial \Omega)$ for
which the measurements $F(\gamma):=\left(\int_{\partial \Omega} g_j \Lambda(\gamma) g_j \dx[s]\right)_{j=1}^4$
uniquely determine $\gamma\in [a,b]^4\subset \R^4$ by a globally convergent Newton method.}
\label{fig:boundary_currents}
\end{figure}

From checking \eqref{eq:lemma_choose_meas_general_gj}, we also obtain the Lipschitz stability constant
for $F(\gamma):=\left(\int_{\partial \Omega} g_j \Lambda(\gamma) g_j \dx[s]\right)_{j=1}^4$ 
as described in lemma \ref{lemma:F_prop_meas_general}(c). For our setting we obtain
the stability estimate
\begin{equation}\label{eq:numerics_stability}
{} \norm{\gamma-\gamma'}_\infty \leq 7.5 \frac{\norm{F(\gamma)-F(\gamma')}_\infty}{\norm{F(2)-F(1)}_\infty}
\quad \text{ for all } \gamma,\gamma'\in I\supseteq [1,2]^4,
\end{equation}
where $I$ is the slightly enlarged interval from lemma~\ref{lemma:F_prop_meas_general}.

Note that here and in the following we consider the measurement error 
relative to $\norm{F(2)-F(1)}_\infty$ as this is the width of the measuring range
$F(1)\geq F(\gamma)\geq F(2)$.

The property \eqref{eq:lemma_choose_meas_general_gj} can be interpreted in the sense that the boundary current $g_j$
generates an electric potential for which the corresponding solution $|u_{\kappa}^{g_j}|^2$ is much larger on $\Gamma_j$ than on the remaining boundary $\Gamma\setminus \Gamma_j$, and that this simultaneously holds for several (but finitely many) Robin transmission coefficients $\kappa=\kappa^{(j,k)}$. To illustrate this localization property, figure \ref{fig:localization_property} shows $|u_{\kappa}^{g_j}|^2$ (in logarithmic scale) for $k=1$ and $k=K$.

\begin{figure}
\begin{center}
\begin{tabular}{c@{\ } c@{\ } c@{\ } c}
\mbox{\includegraphics[width=0.23\textwidth]{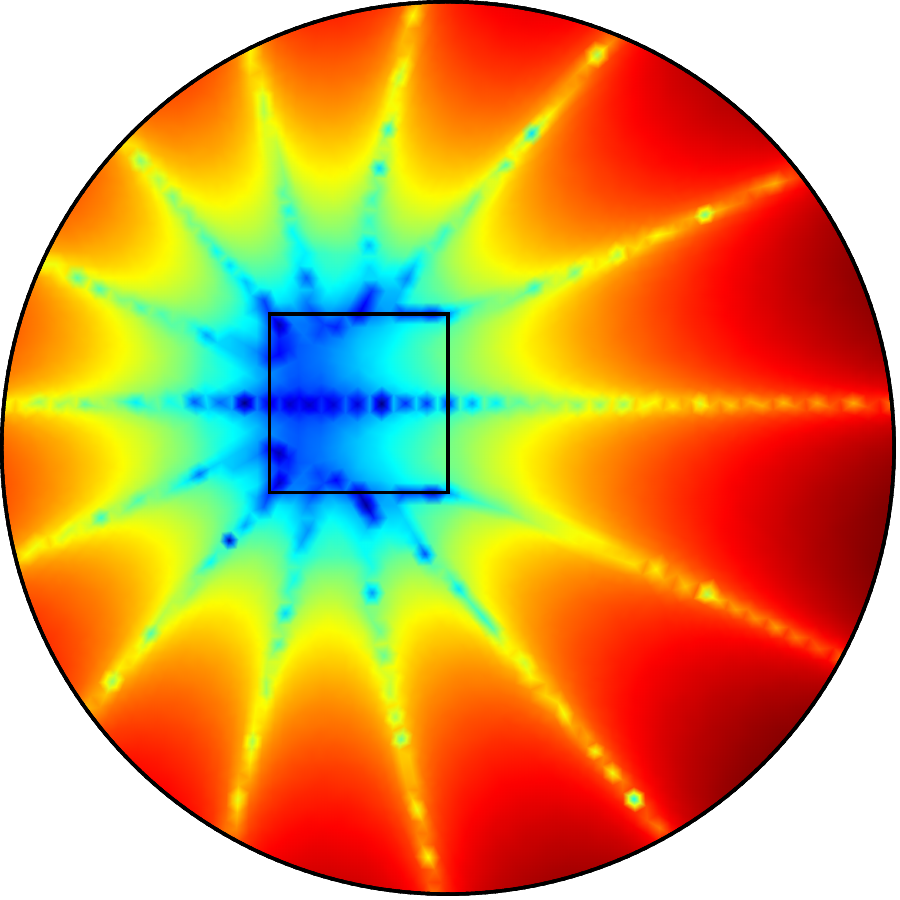}} &
\mbox{\includegraphics[width=0.23\textwidth]{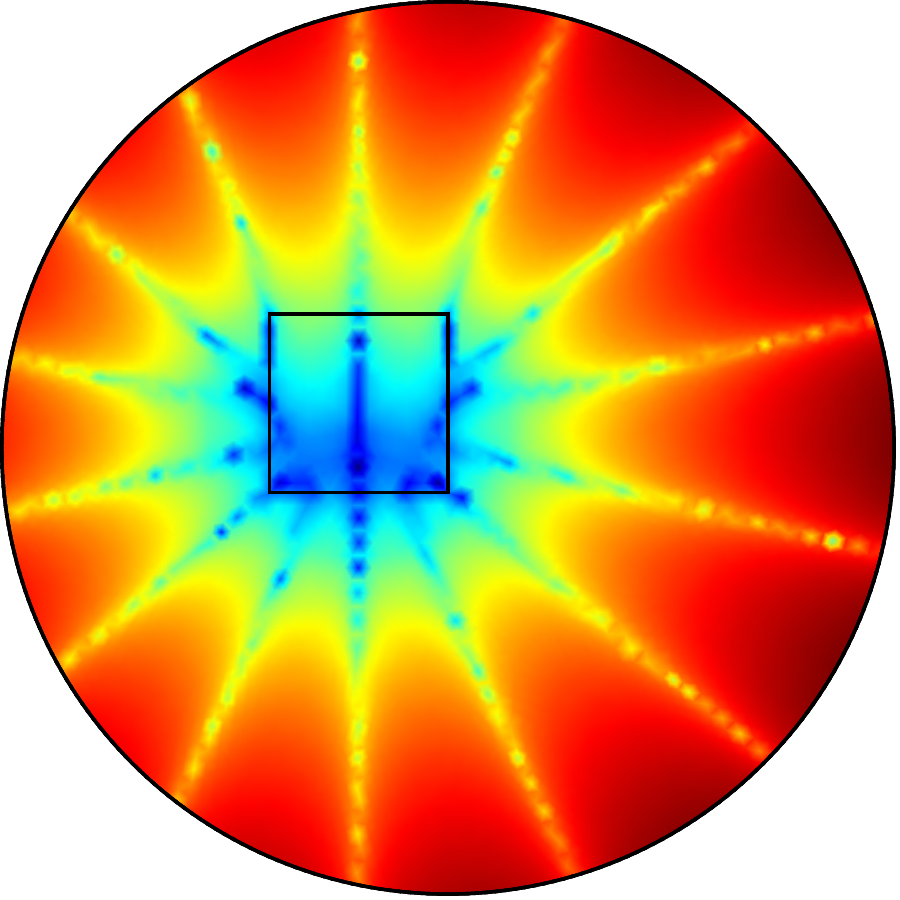}} &
\mbox{\includegraphics[width=0.23\textwidth]{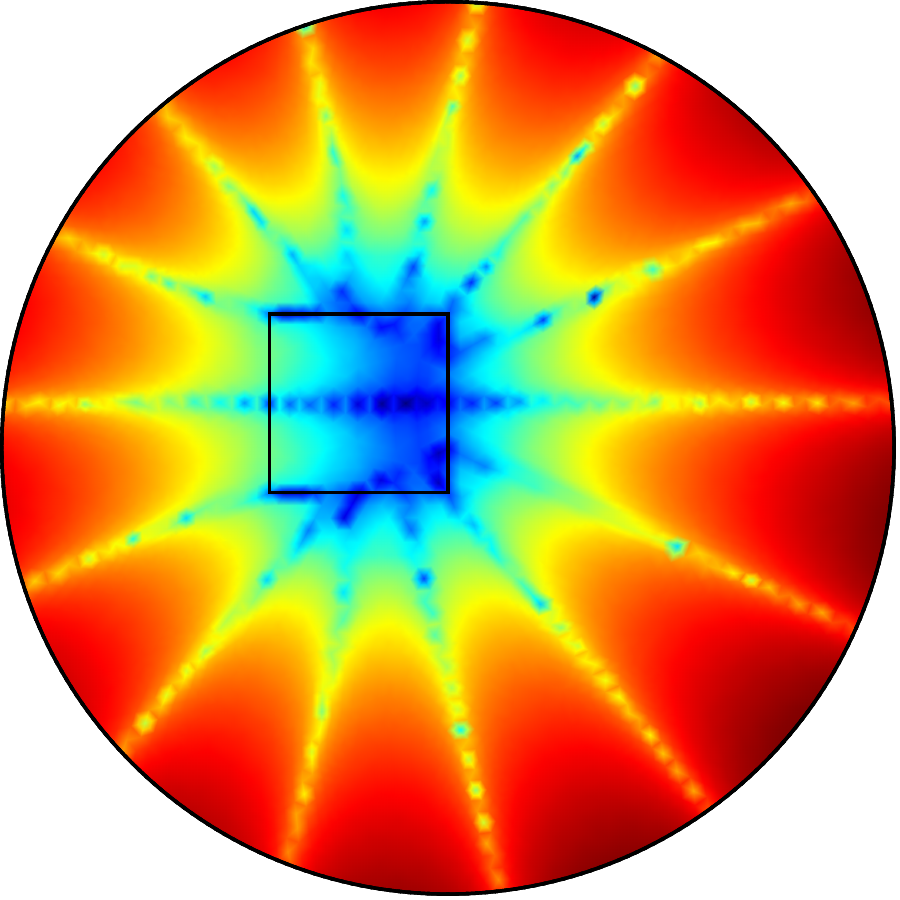}} &
\mbox{\includegraphics[width=0.23\textwidth]{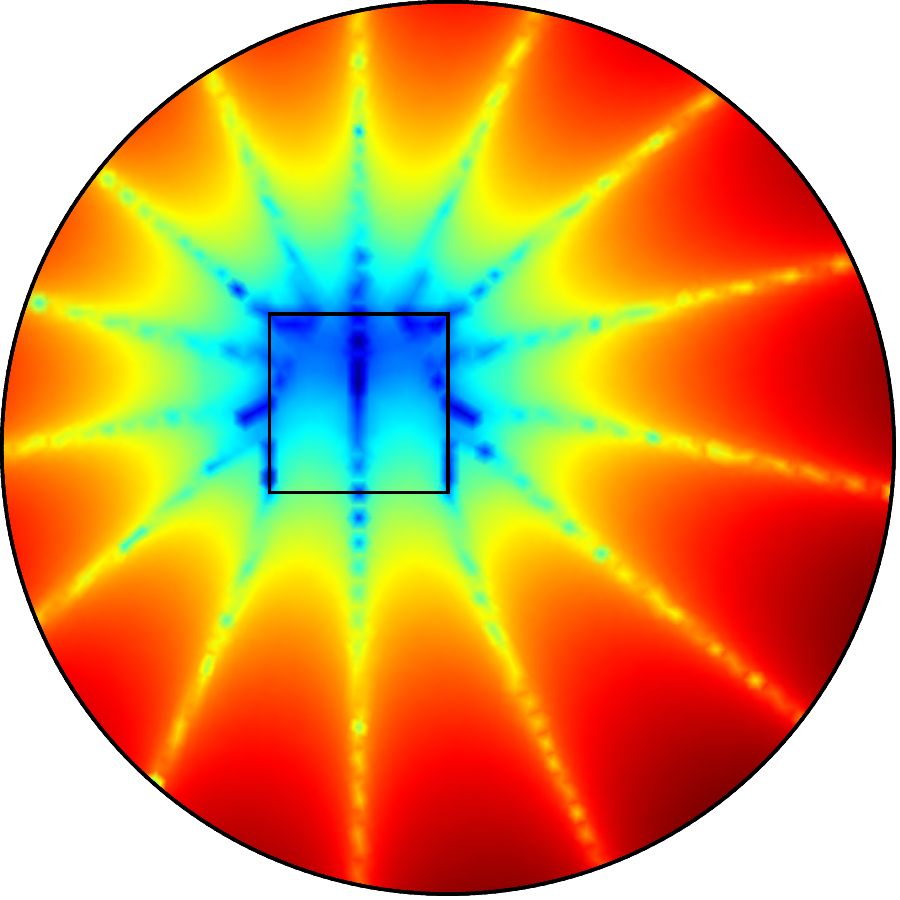}}\\
\mbox{\includegraphics[width=0.23\textwidth]{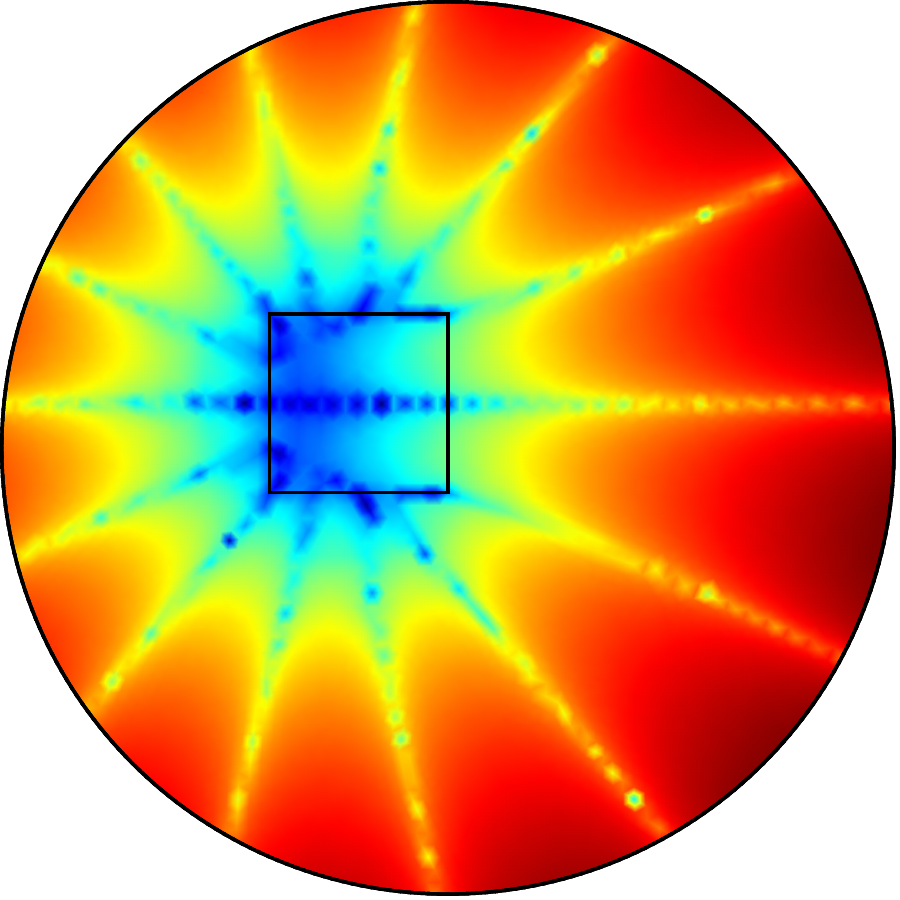}} &
\mbox{\includegraphics[width=0.23\textwidth]{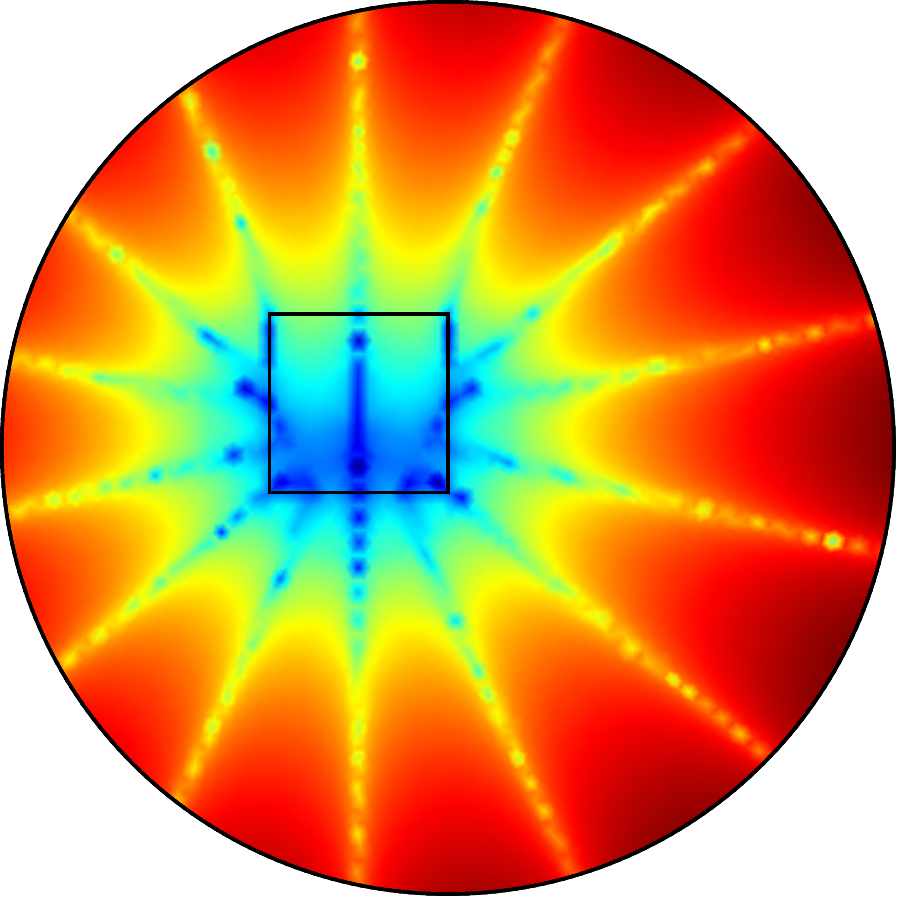}} &
\mbox{\includegraphics[width=0.23\textwidth]{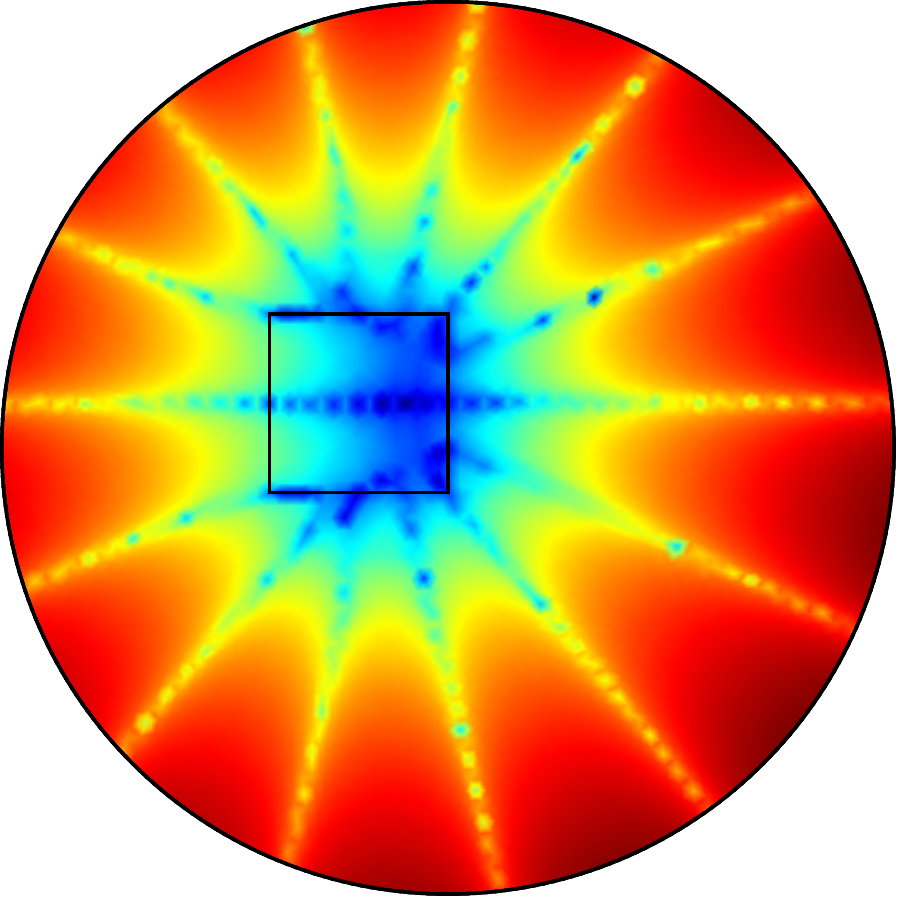}} &
\mbox{\includegraphics[width=0.23\textwidth]{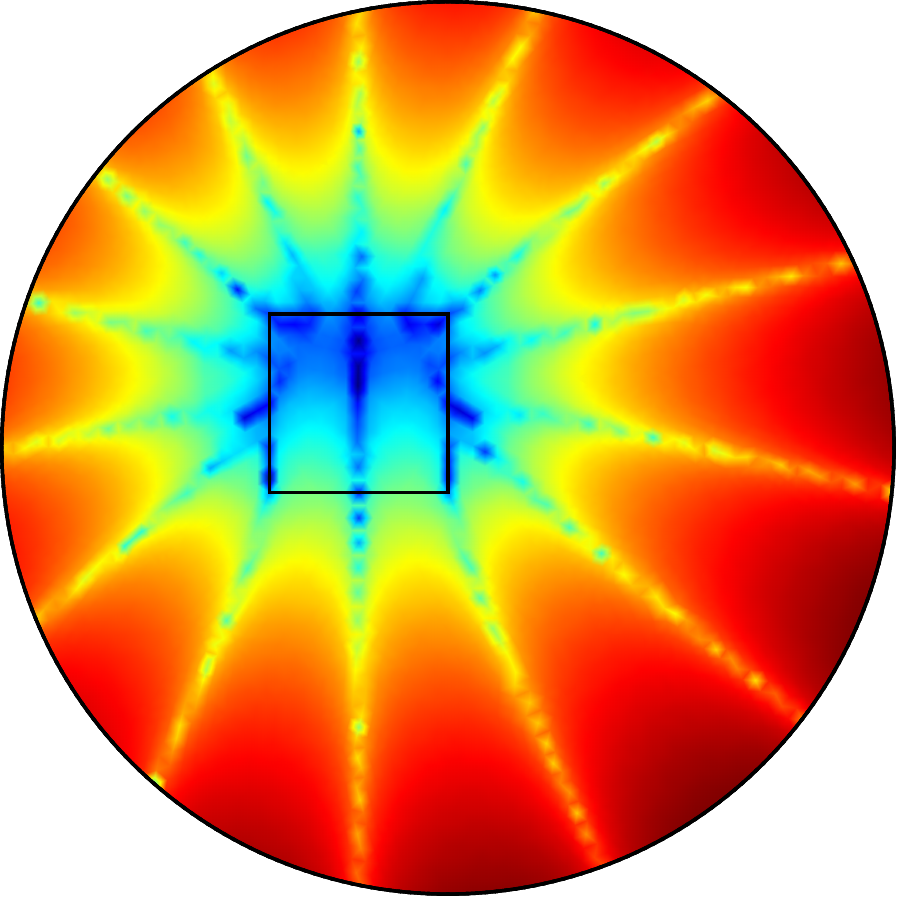}}
\end{tabular}
\end{center}
\caption{Plot of $\log |u_{\kappa}^{g_j}|^2$ generated by the four boundary currents $g_j$ in Figure \ref{fig:boundary_currents} for $j=1,\ldots,4$ (first to fourth column) and $\kappa=\kappa^{(j,k)}$
for $k=1$ (first line) and $k=K$ (last line) showing the localization on boundary part $\Gamma_j$.}
\label{fig:localization_property}
\end{figure}

Let us make a comment on improving the computation time. 
Note that the properties of $F$ only depend on whether the used Neumann functions $g_j$
have the desired property \eqref{eq:lemma_choose_meas_general_gj}, and that our rigorous approach of constructing $g_j$ is computationally more expensive than checking whether some given $g_j$ fulfills \eqref{eq:lemma_choose_meas_general_gj}. In fact, for fixed $j\in \{1,\ldots,n\}$, the construction of the matrix $M_m^{(j)}$ requires solving the PDE \eqref{eq:Robin1}--\eqref{eq:Robin4} for all combinations of $K$ different Robin transmission coefficients and $m$ different Neumann boundary values. On the other hand, checking whether a given Neumann boundary function $g_j$ has the desired property \eqref{eq:lemma_choose_meas_general_gj} only requires solving the PDE \eqref{eq:Robin1}--\eqref{eq:Robin4} for $K$ different Robin transmission coefficients
and the single Neumann boundary function $g_j$. Moreover, as long as $g_j$ does not fulfill  \eqref{eq:lemma_choose_meas_general_gj}, the checking might require very few PDE solutions
if \eqref{eq:lemma_choose_meas_general_gj} already fails to hold for a small $k$.

Hence, one might try computationally cheaper heuristic approaches to construct $g_j$ that satisfy \eqref{eq:lemma_choose_meas_general_gj}. In our experiments, 
we successfully used the ad-hoc approximation
\[
M_m^{(j)}\approx \frac{\alpha}{2} S^{(j)} - \beta \frac{K}{2} (R^{(j,1)}+R^{(j,K)}) \in \R^{m\times m},
\]
(which only requires $2m$ PDE solutions), and always found that increasing the dimension $m\in \N$
lead to Neumann boundary function $g_j$ fulfilling 
\eqref{eq:lemma_choose_meas_general_gj} for all $k\in \{1,\ldots,K\}$.
Moreover, in our experiments, we found the functions $g_j$ constructed with this faster heuristic approach virtually identical
to those constructed with the exact matrix $M_m^{(j)}$ from Theorem \ref{theorem:choose_meas_general}.

\subsubsection{Global convergence of Newton's method}

We numerically study the theoretically predicted global convergence of the standard Newton method
when applied to the measurements $g_1,\ldots,g_4$  constructed in the last subsection. 
We slightly change the definition of $F$ and define the measurements relative to the known lower bound $\gamma=a$
\[
F(\gamma):=\left(\int_{\partial \Omega} g_j \left( \Lambda(\gamma) - \Lambda(a)\right) g_j \dx[s]\right)_{j=1}^4,
\]
and numerically evaluate $F$ using that, for all $g\in L^2(\partial \Omega)$, and $\gamma\in L^\infty_+(\Gamma)$,
\[
\int_{\partial \Omega} g \left( \Lambda(\gamma) - \Lambda(a)\right) g \dx[s]
=\int_\Gamma (a-\gamma) u_a^{(g)} u_\gamma^{(g)}\dx[s],
\]
which immediately follows from the variational formulation of the Robin transmission problem \eqref{Robin_variational}.
Note that this approach is numerically more stable than calculating $\Lambda(\gamma)$ and $\Lambda(a)$ separately as it avoids loss of significance effects.

%The $(i,j)$-th entry of the Jacobian $F'(\gamma)\in \R^{4\times 4}$ is given
%by (cf.\ lemma~\ref{lemma:F_rescaled})
%\[
%\frac{\partial F_i(\gamma)}{\partial \gamma_j}=\int_{\Gamma_j} |u^{(g_i)}_\gamma|^2 \dx[s]\quad \text{ for all } \gamma\in (0,\infty)^4,\ i,j=1,\ldots,4.
%\]

We choose the true coefficient value as $\hat \gamma:=(1.3,1.8,1.5,1.9)$, and first test the reconstruction for noiseless data $y^\delta:=\hat y:=F(\hat \gamma)$. 
Starting with the lower bound $\gamma^{(0)}:=(1,1,1,1)$, we implement the standard Newton method
\[
\gamma^{(i+1)}:=\gamma^{(i)}-F'(\gamma^{(i)})^{-1} \left( F(\gamma^{(i)}) - y^\delta\right),
\]
where the $(j,l)$-th entry of the Jacobian matrix $F'(\gamma)\in \R^{4\times 4}$ is given
by $\int_{\Gamma_l} |u^{(g_j)}_\gamma|^2 \dx[s]$, cf.\ lemma~\ref{lemma:F_rescaled}.

We repeat this for noisy data with relative noise level $\delta>0$, 
that we obtain by adding a vector with random entries to $\hat y$, 
so that $\norm{y^\delta-\hat y}_\infty=\delta \norm{F(b)}_\infty$.
Note that $F(a)=0$, so that this chooses the norm level relative to the measurement range. For the noiseless case $\delta=0$ we committed the so-called inverse crime of using the same 
forward solver (i.e., the same finite element mesh) for
simulating the data $\hat y=F(\hat \gamma)$ and for evaluating $F$ and $F'$ in the Newton iteration.
For the noisy cases $\delta>0$, we used a different mesh for the forward and inverse solvers. 

\begin{figure}
\begin{center}
\mbox{\includegraphics[height=0.25\textheight]{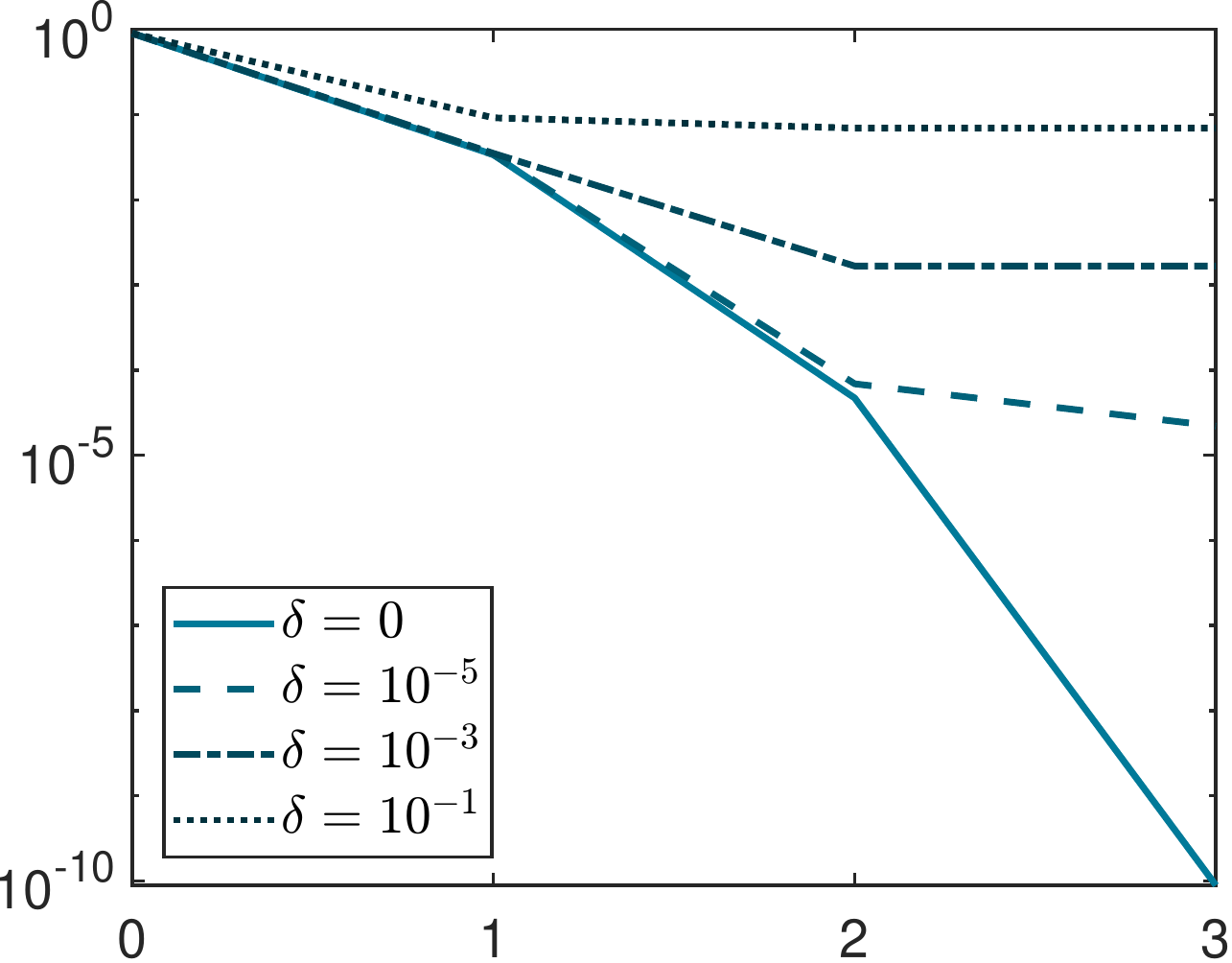}}
\end{center}
\caption{Global convergence of Newton method when applied to the right measurements.}
\label{fig:error_plot}
\end{figure}

Figure \ref{fig:error_plot} shows the error of the first Newton iterations for the case $\delta=0$, $\delta=10^{-5}$, $\delta=10^{-3}$,
and $\delta=10^{-1}$, and demonstrates the theoretically predicted quadratic convergence properties. 
At this point, let us stress that also for noisy data $y^\delta$, Lemma \ref{lemma:F_prop_meas_general} yields that
there exists a unique solution $y^\delta\in I\supseteq [a,b]^n$ of
\[
F(\gamma^\delta)=y^\delta,
\]
and that the standard Newton method converges to this solution $\gamma^\delta$, as long as $y^\delta$ lies within the bounds $F(a)\geq y^\delta \geq F(b)$
(which is easily guaranteed by capping or flooring the values in $y^\delta$).
Moreover, the obtained solution will satisfy the error estimate
\[
\norm{\gamma^\delta-\hat \gamma}_\infty
%\leq L \norm{F(\gamma^\delta)-F(\hat \gamma)}_\infty=L\norm{y^\delta-\hat y}_\infty=\delta L \norm{F(b)}_\infty
%\leq 7.5 \norm{F(b)}^{-1}_\infty \norm{F(\gamma^\delta)-F(\hat \gamma)}_\infty=7.5 \norm{F(b)}^{-1}_\infty \norm{y^\delta-\hat y}_\infty
\leq 7.34\,\delta 
\]
due to the stability estimate \eqref{eq:numerics_stability} obtained in the last subsection.

We finish this subsection with an example where the true Robin transmission coefficient $\hat \gamma\in L^\infty_+(\Gamma)$ is not piecewise constant
but within the a-priori known bounds 
\[
a\leq \hat \gamma(x)\leq b\quad \text{ for $x\in \Gamma$ (a.e.)}
\]
Then $\hat y=\left(\int_{\partial \Omega} g_j \left( \Lambda(\gamma) - \Lambda(a)\right) g_j \dx[s]\right)_{j=1}^4\in \R^4$ will still satisfy
$F(a)\geq \hat y \geq F(b)$, so that there exists a unique solution $\gamma\in I\supseteq [a,b]^n$ 
of $F(\gamma)=\hat y$, i.e., there exists a unique piecewise constant Robin transmission coefficient that leads to the same measured data as the true 
non-piecewise-constant coefficient function. The Newton iteration applied to $\hat y$ (or a noisy version $y^\delta$) will globally converge to this piecewise constant solution
$\gamma$ (or an approximation $\gamma^\delta$), see figure~\ref{fig:nonconstant} for a numerical example.
%Moreover, by taking the infimum and the supremum over all boundary parts $\Gamma_j$, one can construct two piecewise constant functions
%with
%$\hat \gamma_\text{inf}\leq \hat \gamma(x) \leq \hat \gamma_\text{sup}$, 

\begin{figure}
\begin{center}
\mbox{\includegraphics[height=0.25\textheight]{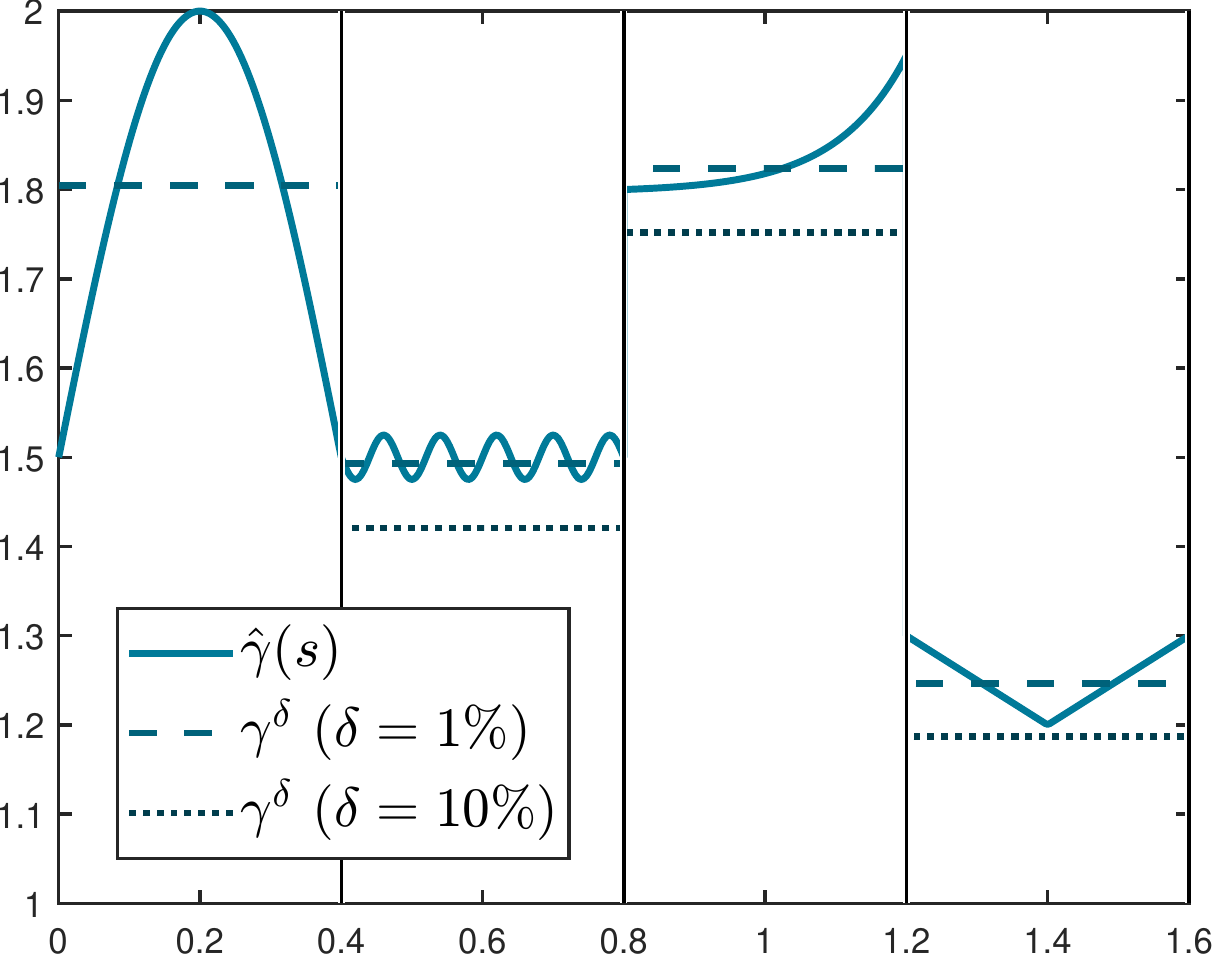}}
\end{center}
\caption{Reconstructions for non-piecewise-constant Robin transmission coefficients.}
\label{fig:nonconstant}
\end{figure}

\subsubsection{Effect of interval width and number or unknowns}

Our result in Theorem \ref{theorem:choose_meas_general} holds for any a-priori known bounds $b>a>0$ and any number of unknowns $n\in \N$. Thus, in theory, we can treat arbitrary large intervals $[a,b]$ and arbitrary fine resolutions of $\Gamma$.
However, numerically, the constructed trigonometric polynomials $g_j$ will quickly become more and more oscillatory, and the calculated Lipschitz constants will quickly increase. 
 
To demonstrate the effect of the interval width, we proceed as in subsection \ref{subsect:measurements} to calculate four boundary currents $g_1,\ldots,g_4\in L^2(\partial \Omega)$ that uniquely determine $\gamma\in [a,b]^4$ and yield global Newton convergence 
for $a=1$, and $b\in \{2,3,4,5\}$. Table \ref{tab:interval_width} shows the dimension of the trigonometric polynomial subspace of $L^2(\partial \Omega)$ that contains $g_1,\ldots,g_4$ and the obtained Lipschitz constant for the inverse problem of determining $\gamma$ from the corresponding measurements.
\begin{table}
% table caption is above the table
\caption{Effect of interval width on trigonometric degree and Lipschitz constant}
\label{tab:interval_width} 
\begin{tabular}{c c c}
\hline\noalign{\smallskip}
interval & dimension $m$ & Lipschitz constant\\
\noalign{\smallskip}\hline\noalign{\smallskip}
$[1,2]$ & $15$ & $7.34$\\
$[1,3]$ & $21$ & $21.0$\\
$[1,4]$ & $21$ & $35.8$ \\
$[1,5]$ & $71$ & $458000$ \\
\noalign{\smallskip}\hline
\end{tabular}
\end{table}

To demonstrate the effect of the number of unknowns, we then replace the square $D$ by regular polygons with $n=3$, $n=4$, $n=5$, and $n=6$ sides keeping the polygon center and circumradius the same as in the square ($n=4$) case. We assume that $\gamma$ is piecewise constant with respect to the polygon sides. As in subsection \ref{subsect:measurements} we then calculate $n$ boundary currents $g_1,\ldots,g_n\in L^2(\partial \Omega)$ that uniquely determine $\gamma\in [1,2]^n$ and yield global Newton convergence. The required dimension of the trigonometric polynomial subspace of $L^2(\partial \Omega)$ and the obtained Lipschitz constant are shown in table \ref{tab:number_unknowns}.
\begin{table}
% table caption is above the table
\caption{Effect of number of unknowns $n$ on trigonometric degree and Lipschitz constant}
\label{tab:number_unknowns} 
\begin{tabular}{c c c}
\hline\noalign{\smallskip}
interval & dimension $m$ & Lipschitz constant\\
\noalign{\smallskip}\hline\noalign{\smallskip}
$n=3$ & $11$ & $6.37$\\
$n=4$ & $15$ & $7.34$\\
$n=5$ & $23$ & $54.2$ \\
$n=6$ & $55$ & $507$ \\
\noalign{\smallskip}\hline
\end{tabular}
\end{table}

In both situations, the boundary currents quickly become highly oscillatory, and the calculated stability constant worsens. Hence, at the current state, our approach will only be feasible for moderate contrasts and relatively few unknowns as stated in the introduction. It should be noted, however, that our criterion \eqref{eq:lemma_choose_meas_general_gj} in Theorem \ref{theorem:choose_meas_general} is sufficient but possibly not necessary for uniqueness, Lipschitz stability and global Newton convergence. The constructed boundary currents and the calculated Lipschitz constants may be far from optimal. Since our result is (to the knowledge of the author) the very first on uniqueness, global convergence and explicit Lipschitz stability constants for a discretized inverse coefficient problem, there may well be room for improvement and significantly sharper estimates that could practically yield in less oscillations and better stability constants.

\section{Conclusions}
We have derived a method to determine which (finitely many) measurements uniquely determine the unknown coefficient in an inverse coefficient problem with a given resolution,
and proved global convergence of Newton's method for the resulting discretized non-linear inverse problem. Our method also allows to explicitly calculate the Lipschitz stability constant, and yields an error estimate for noisy data. To the knowledge of the author, these are the first such results for discretized inverse coefficient problems. 

Our method stems on an extension of classical global Newton convergence theory from convex inverse-monotonic to convex (forward-)monotonic functions that
arise in elliptic inverse coefficient problems. The extension required an extra assumption on the directional derivatives of the considered function
that we were able to fulfill by choosing the right measurements. 

Our proofs mainly utilized monotonicity ideas and localized potential techniques that are also known for several other elliptic inverse coefficient problems. So the ideas in this work might be applicable to other applications as well. A particularly interesting extension would be the case of EIT where it has recently been shown \cite{harrach2019uniqueness} that an unknown conductivity distribution with a given resolution is uniquely determined by voltage-current-measurements on sufficiently many electrodes, but the number of required electrodes is not known. The main difficulty of such an extension is that localized potentials in EIT cannot concentrate on each domain part separately as in the simpler Robin transmission problem considered in this work. Roughly speaking, a localized potential in EIT with high energy in some region $D$ will also have a high energy on its way from the boundary electrodes to $D$. 
This behavior will make the application of our herein presented ideas more challenging.

\section*{Acknowledgments}
The author would like to thank Professor Michael Klibanov for his inspiring work on global convergence, and Professor Frank Natterer for
pointing out possible relations between the monotonicity method and Collatz monotone functions.

\bibliography{literaturliste.bib}

\bibliographystyle{spmpsci}

\end{document}